\newtheorem{theorem}{Theorem}[section]
\newtheorem{lemma}[theorem]{Lemma}
\newtheorem{proposition}[theorem]{Proposition}
\newtheorem{corollary}[theorem]{Corollary}
\theoremstyle{definition}
\newtheorem{definition}[theorem]{Definition}
\newtheorem{remark}[theorem]{Remark}
\numberwithin{equation}{section}
\newcommand{\blankbox}[2]
\begin{document}
\title [Weighted modulation and Wiener amalgam spaces]
{CHARACTERIZATIONS OF SOME PROPERTIES ON WEIGHTED MODULATION AND WIENER AMALGAM SPACES }
\author{WEICHAO GUO}
\address{School of Mathematical Sciences, Xiamen University,
Xiamen, 361005, P.R.China} \email{weichaoguomath@gmail.com}
\author{jiecheng chen}
\address{Department of Mathematics, Zhejiang Normal University,
Jinhua, 321004, P.R.China} \email{jcchen@zjnu.edu.cn}
\author{DASHAN FAN}
\address{Department of Mathematics, University of Wisconsin-Milwaukee, Milwaukee, WI 53201, USA}
\email{fan@uwm.edu}
\author{GUOPING ZHAO}
\address{School of Applied Mathematics, Xiamen University of Technology,
Xiamen, 361024, P.R.China} \email{guopingzhaomath@gmail.com}

\keywords{characterization, product inequalities, convolution inequalities, embedding, weighted, modulation spaces}
\subjclass[2010]{42B15, 42B35.}

\begin{abstract}
In this paper, some properties on weighted modulation and Wiener amalgam spaces are characterized by the corresponding properties on weighted Lebesgue spaces.
As applications, sharp conditions for product inequalities, convolution inequalities and embedding on weighted modulation and Wiener amalgam spaces are obtained.
These applications improve and extend many known results.
\end{abstract}

\maketitle

%\footnote{The second author is the corresponding author.}%

\section{Introduction}
The study of modulation space, which originated by Feichtinger \cite%
{Feichtinger} about 30 years ago, has over time been transformed into a rich
and multifaceted theory, providing basic insights into such topics as
harmonic analysis, time-frequency analysis and partial differential
equation. Nowadays, the theory has played more and more notable roles. Among
numerous references, one can see \cite{Feichtinger_Survey} for the
historical perspectives and background on the motivations which led to the invention of the modulation spaces,
see \cite{Cordero_Nicola_Sharpness,
Galperin_Samarah, Sugimoto_Tomita, Toft_weighted modulation space,
Toft_sharp convolution, Tribel_modulation space} for understanding many
characterizations and fundamental properties of the modulation space, see
\cite{AKKL_JFA_2007,AKC_JOT_2005,Chen_Fan_Sun, Feichtinger_Narimani} for the study of relevant
operators on modulation space, and see \cite%
{Cordero_Nicola_JDE,WZG_JFA_2006,Wang_book,WH_JDE_2007} for the study of
nonlinear evolution equations related to the modulation space.

Let $\mathscr {S}:= \mathscr {S}(\mathbb{R}^{n})$ be the space of all Schwartz functions
and $\mathscr {S}':=\mathscr {S}'(\mathbb{R}^{n})$ be the space of all tempered distributions.
We define the Fourier transform $\mathscr{F}f$ and the inverse Fourier transform $\mathscr{F}^{-1}f$ of $f\in \mathscr{S}(\mathbb{R}^{n})$ by
$$
\mathscr{F}f(\xi)=\hat{f}(\xi)=\int_{\mathbb{R}^{n}}f(x)e^{-2\pi ix\cdot \xi}dx
~~
,
~~
\mathscr{F}^{-1}f(x)=\check{f}(x)=\int_{\mathbb{R}^{n}}f(\xi)e^{2\pi ix\cdot \xi}d\xi.
$$

The translation operator is defined as $T_{x_0}f(x)=f(x-x_0)$ and
the modulation operator is defined as $M_{\xi}f(x)=e^{2\pi i\xi \cdot x}f(x)$, for $x, x_0, \xi\in\mathbb{R}^n$.
Fixed a nonzero function $\phi\in \mathscr{S}$, the short-time Fourier
transform of $f\in \mathscr{S}'$ with respect to the window $\phi$ is given by
\begin{equation}
V_{\phi}f(x,\xi)=\langle f,M_{\xi}T_x\phi\rangle=\int_{\mathbb{R}^n}f(y)\overline{\phi(y-x)}e^{-2\pi iy\cdot \xi}dy.
\end{equation}

We use $L_{x; p}(\mathbb{R}^n)$ to denote the Banach space ($p\geq 1$) or Quasi-Banach space ($0<p<1$) of measurable functions $f:\mathbb{R}^n \rightarrow \mathbb{C}$, whose norms
\begin{equation}
\|f\|_{L_{x;p}(\mathbb{R}^n)}:= \left(\int_{\mathbb{R}^n}|f(x)|^{p}dx\right) ^{{1}/{p}}
\end{equation}
are finite, with the usual modification when $p=\infty$.
In many cases we will abbreviate $L_{x; p}(\mathbb{R}^n)$ as $L_{x; p}$ or even $L_p$, when there is no chance of confusion.

Let $\mathfrak{M}(x,\xi)$ be a non-negative function on $\mathbb{R}^{2n}$, and $\phi\in \mathscr{S}$ be a fixed window.
We define the norm (or quasi-norm)
\begin{equation}
\begin{split}
\|f\|_{\mathcal {M}_{p,q}^{\mathfrak{M}}}&=\big\|\|V_{\phi}f(x,\xi)\mathfrak{M}(x,\xi)\|_{L_{x;p}}\big\|_{L_{\xi; q}}
\\&
=\left(\int_{\mathbb{R}^n}\left(\int_{\mathbb{R}^n}|V_{\phi}f(x,\xi)\mathfrak{M}(x,\xi)|^{p}dx\right)^{{q}/{p}}d\xi\right)^{{1}/{q}}
\end{split}
\end{equation}
with the usual modification when $p=\infty$ or $q=\infty$.
Then, the
modulation space \ $\mathcal{M}_{p,q}^{\mathfrak{M}}$ \ is\ the set of all $\ f\in \mathscr {S}'(\mathbb{R}^n)$ \ satisfying \ $\left\Vert f\right\Vert _{\mathcal{M}_{p,q}^{\mathfrak{M}}}<\infty $.

From the definition, we see that
the modulation space is defined by measuring the integrability of $V_{\phi}f(x,\xi)$ in some suitable mixed-norm spaces on $\mathbb{R}^{2n}$ (the time-frequency plane).
Moreover, weights $\mathfrak{M}(x,\xi)$ on the time-frequency plane can be used to draw a more accurate portrait about the global integral properties
of the short-time Fourier transform, one can see \cite{Grochnig_weight} for a comprehensive discussion of weights in time-frequency analysis.
Also, we want to remind the reader that
the initial definition of $\mathcal {M}_{p,q}^{\mathfrak{M}}$ is for $1\leq p, q\leq \infty$,
while the reader can see \cite{Galperin_Samarah, Kobayashi, Tribel_modulation space}
for the definition of $\mathcal{M}_{p,q}^{\mathfrak{M}}$ on the full range $0<p,q\leq \infty $.
In this paper, we will adopt the definition mentioned in \cite{Galperin_Samarah}, which is consistent with the original definition on $1\leq p, q \leq \infty$.

Moderate weights occur in the definition of general modulation space.
In fact, the moderateness of weights appears quite naturally for the convolution estimates (see \cite{Grochnig}), which is the basic tool for studying time-frequency analysis.
More formally, for a weigh function $v$, a non-negative function $\omega$ defined on $\mathbb{R}^n$ is called $v$-moderate if
\begin{equation}
\omega(x+y)\leq Cv(x)\omega(y)
\end{equation}
for any $x,y\in \mathbb{R}^n$, where \ $C$ \ is a constant independent of \ $x,y\in \mathbb{R}^{n}$.

In this paper, we will consider the weights of polynomial growth.
We use the notation $\mathscr{P}(\mathbb{R}^n)$ to denote the cone of all non-negative functions $\omega$ which are
$v-$moderate, where $v$ is a polynomial on $\mathbb{R}^n$.
In addition, our main concern is the weights of separation of variables.
More precisely, we study the weighted modulation space $\mathcal {M}_{p,q}^{\mathfrak{M}}$ with $\mathfrak{M}(x,\xi)=\omega(x) m(\xi)$.
This separation property of weights roughly makes the behaviors of the weighted modulation spaces more close to the behaviors of corresponding weighted Lebesgue spaces.

Let $0<p,q\leq \infty$, and $\omega$ be a weight function. The function space $L_{x;p,\omega}$ consists of all measurable functions $f$ such that
\begin{numcases}{\|f\|_{L_{x;p,\omega}}=}
     \left(\int_{\mathbb{R}^n}|f(x)\omega(x)|^p dx\right)^{1/p}, &$p<\infty$   \\
     ess\sup_{x\in \mathbb{R}^n}|f(x)\omega(x)|,  &$p=\infty$
\end{numcases}
is finite. We write it as $L_{p,\omega}$ if there is no confusion. Also, we write $L_{p,s}$ for the case $\omega(x)=(1+ |x|^2)^{s/2}$
and $L_p=L_{p,0}$.

Now, we give the definition of weighted modulation space $\mathcal {M}_{p,q}^{m,\omega}$.

\begin{definition}\label{Definition, modulation space, continuous form}
Let $0<p,q\leq \infty$, $m, \omega \in \mathscr{P}(\mathbb{R}^n)$.
Given a window function $\phi\in \mathscr{S}\backslash\{0\}$, the (weighted) modulation space $\mathcal {M}_{p,q}^{m,\omega}$ consists
of all $f\in \mathscr{S}'(\mathbb{R}^n)$ such that the norm
\begin{equation}
\begin{split}
\|f\|_{\mathcal {M}_{p,q}^{m,\omega}}&=\big\|\|V_{\phi}f(x,\xi)\|_{L_{x;p,\omega}}\big\|_{L_{\xi;q,m}}
\\&
=\left(\int_{\mathbb{R}^n}\left(\int_{\mathbb{R}^n}|V_{\phi}f(x,\xi)\omega(x)|^{p} dx\right)^{{q}/{p}}|m(\xi)|^qd\xi\right)^{{1}/{q}}
\end{split}
\end{equation}
is finite, with the usual modification when $p=\infty$ or $q=\infty$.
In addition, we write $\mathcal {M}_{p,q}^{s,t}:= \mathcal {M}_{p,q}^{m,\omega}$ for the case $\omega=\langle x\rangle^t$ and $m(\xi)=\langle \xi\rangle^s$,
where $\langle x\rangle :=(1+|x|^{2})^{{1}/{2}}$.
We also write $\mathcal {M}_{p,q}^s:=\mathcal {M}_{p,q}^{s,0}$ and $\mathcal {M}_{p,q}:=\mathcal {M}_{p,q}^{0,0}$.
\end{definition}

The above definition of \ $\mathcal{M}_{p,q}^{m,\omega }$ \  is independent of the choice of window function $\phi$.
The reader may see this fact in \cite{Grochnig} \ for the case $(p,q)\in\lbrack 1,\infty ]^{2}$,
and in \cite{Galperin_Samarah} for the case $(p,q)\in (0,\infty ]^{2}\backslash\lbrack 1,\infty ]^{2}$.

Next, we introduce the Wiener amalgam space $W_{p,q}^{ m,\omega}$
corresponding to the space $\mathcal{M}_{p,q}^{m,\omega }$.

\begin{definition}
Let $0<p,q\leq \infty$, $m, \omega \in \mathscr{P}(\mathbb{R}^n)$.
Given a window function $\phi\in \mathscr{S}\backslash\{0\}$, the (weighted) Wiener amalgam space $W_{p,q}^{m,\omega}$ consists
of all $f\in \mathscr{S}'(\mathbb{R}^n)$ such that the norm
\begin{equation}
\begin{split}
\|f\|_{W_{p,q}^{m,\omega}}&=\big\|\|V_{\phi}f(x,\xi)\|_{L_{\xi;q,m}}\big\|_{L_{x;p,\omega}}
\\&
=\left(\int_{\mathbb{R}^n}\left(\int_{\mathbb{R}^n}|V_{\phi}f(x,\xi)m(\xi)|^{q} d\xi\right)^{{p}/{q}}|\omega(x)|^pdx\right)^{{1}/{p}}
\end{split}
\end{equation}
is finite, with the usual modification when $p=\infty$ or $q=\infty$.
\end{definition}

Again, the definition is independent of the choice of the window $\phi $. We
write $W_{p,q}^{s,t}:=W_{p,q}^{m,\omega }$ for $\omega =\langle x\rangle
^{t} $ and $m(\xi )=\langle \xi \rangle ^{s}$.

Since $V_{\phi }f(x,\xi )=V_{\hat{\phi}}\hat{f}(\xi ,-x)$, we have the
following relations between the modulation space and the Wiener amalgam space:
\begin{equation}
W_{p,q}^{\omega,m}=\mathscr{F}(\mathcal {M}_{q,p}^{\tilde{\omega},m}),\hspace{10mm} \mathcal {M}_{p,q}^{m,\omega}=\mathscr{F}(W_{q,p}^{\tilde{\omega},m}),
\end{equation}
where $\tilde{\omega}(x)=\omega(-x)$.

Based on the above relations, properties of Wiener amalgam spaces may be
deduced directly from the corresponding properties of modulation spaces.
Thus, in this article we will mainly give the proof on the modulation space,
then the corresponding conclusion on the Wiener amalgam space follows.

As we known, some algebraic properties, such as the product and the
convolution, play a decisive role in the research of some nonlinear problems
in partial differential equation. However, we notice that extensive studies
on the modulation spaces emerged mostly in last ten years. Compared to the
classical Lebesgue spaces and Besov spaces, these properties, as well as
some analysis properties, of the modulation spaces are quite different and
still are not fully explored. Below we briefly review some historical
results on this subject.

One initial significant work is Feichtinger's paper \cite{Feichtinger_Banach
convolution}, which gives a general description of Banach convolution
property for Wiener type spaces $W(B,C)$ defined on locally compact groups.
One can also see \cite{C.Heil} for some convolution properties for weighted
Wiener amalgam spaces. For the unweighted modulation space $\mathcal{M}%
_{p,q} $, the embedding, product and convolution relations are characterized
by Cordero and Nicola in \cite{Cordero_Nicola_Sharpness}. Inspired by their
results, in an earlier paper \cite{Guo_SCI China}, we obtain the optimum of
product inequality on $\mathcal{M}_{p,q}^{0,t}$, convolution inequality on $%
\mathcal{M}_{p,q}^{s,0}$ and embedding on the weighted modulation spaces $%
\mathcal{M}_{p,q}^{s,t}$, for $1\leq p,q\leq \infty $, $s,t\in \mathbb{R}$.
Furthermore, we study the optimum of product inequality on $\mathcal{M}%
_{p,q}^{s}$ in a recent work \cite{Guo_Sharp convolution}. Meanwhile, we
notice that a recent paper \cite{Toft_sharp convolution} also concerns the
similar problem. In \cite{Toft_sharp convolution}, the authors establish
some sufficient conditions, as well as some necessary conditions, on the
product and convolution inequalities on weighted modulation spaces $\mathcal{%
M}_{p,q}^{s,t}$. However, their results remain a distance from the optimum,
since there is a gap between the sufficiency and necessity.

In this paper, we will continue this topic. As one of the targets,
we will give complete answers on \ $\mathcal{M}_{p,q}^{s,t}$ by establishing the
sharp product and convolution inequalities. However, our research will be,
not merely on $\mathcal{M}_{p,q}^{s,t}$, engaged in the more general space
$\mathcal{M}_{p,q}^{m,\omega }$. To this end, we start with asking a more
general question with weights in $\mathscr{P}(\mathbb{R}^{n})$: if $f$
lives in one modulation space $\mathcal{M}_{p_{1},q_{1}}^{m_{1},\omega _{1}}$
and $g$ lives in another modulation space $\mathcal{M}_{p_{2},q_{2}}^{m_{2},%
\omega _{2}}$, what modulation space $\mathcal{M}_{p,q}^{m,\omega }$ does
the product $fg$ or the convolution $f\ast g$ live in? More quantitatively,
what optimal conditions can guarantee the bilinear estimates
\begin{equation}\label{Introduction 1}
\|fg\|_{\mathcal {M}_{p,q}^{m,\omega}}\lesssim  \|f\|_{\mathcal {M}_{p_1,q_1}^{m_1,\omega_1}}\|g\|_{\mathcal {M}_{p_2,q_2}^{m_2,\omega_2}}
\end{equation}
or
\begin{equation}\label{Introduction 2}
\|f\ast g\|_{\mathcal {M}_{p,q}^{m,\omega}}\lesssim  \|f\|_{\mathcal {M}_{p_1,q_1}^{m_1,\omega_1}}\|g\|_{\mathcal {M}_{p_2,q_2}^{m_2,\omega_2}}.
\end{equation}
If $f\in \mathscr{S}^{\prime }$ and $g\in \mathscr{S}^{\prime }$, it is not
clear in general what is the sense of $fg$ and $f\ast g$. For this reason,
we use $\mathcal{M}_{p_{1},q_{1}}^{m_{1},\omega _{1}}\cdot \mathcal{M}%
_{p_{2},q_{2}}^{m_{2},\omega _{2}}\subset \mathcal{M}_{p,q}^{m,\omega }$ to
denote that the product map $f\cdot g$ initially defined on $\mathscr{S}%
\times \mathscr{S}$ extends to a bounded bilinear map from $\mathcal{M}%
_{p_{1},q_{1}}^{m_{1},\omega _{1}}\times \mathcal{M}_{p_{2},q_{2}}^{m_{2},%
\omega _{2}}$ into $\mathcal{M}_{p,q}^{m,\omega }$. Similarly, we use $%
\mathcal{M}_{p_{1},q_{1}}^{m_{1},\omega _{1}}\ast \mathcal{M}%
_{p_{2},q_{2}}^{m_{2},\omega _{2}}\subset \mathcal{M}_{p,q}^{m,\omega }$ to
denote that the convolution map $f\ast g$ initially defined on $\mathscr{S}%
\times \mathscr{S}$ extends to a bounded bilinear map from $\mathcal{M}%
_{p_{1},q_{1}}^{m_{1},\omega _{1}}\times \mathcal{M}_{p_{2},q_{2}}^{m_{2},%
\omega _{2}}$ into $\mathcal{M}_{p,q}^{m,\omega }$. More generally, for
function spaces $X,Y$ and $Z$ defined on $\mathbb{R}^{n}$, we adopt the
notations $X\cdot Y\subset Z$ and $X\ast Y\subset Z$ to denote the similar
meaning. We use the notation $X\subset Y$ to denote the continuous embedding
of function spaces.

Our strategy of research is to reduce the problems to their equivalent
discrete versions. So, next we introduce the discrete Lebesgue spaces. Let
$s\in \mathbb{R}$, $0<p,q\leq \infty $, $\omega :\mathbb{Z}^{n}\rightarrow \mathbb{R}^{+}$
be a weight function. The weighted discrete Lebesgue space $l_{k;p,\omega }$
consists of all functions $f:\mathbb{Z}^{n}\rightarrow \mathbb{C}$ whose norm
\begin{numcases}{\|f\|_{l_{k;p,\omega}}=}
\left(\sum_{k\in \mathbb{Z}^n}|f(k)\omega(k)|^p \right)^{{1}/{p}}, &$p<\infty$
\\
\sup_{k\in \mathbb{Z}^n}|f(k)\omega(k)|,\hspace{15mm} &$p=\infty$
\end{numcases}
is finite.
We write $l_{p,\omega}$ for short, if there is no chance of confusion.
We write $l_{p, s}$ for the case $\omega(k)=\langle k\rangle^s$.

If $\Omega$ is a compact subset of $\mathbb{R}^n$, we denote the function class
\begin{equation}
L^{\Omega}_{p,\omega}=\{f\in \mathscr{S}': \textbf{supp}\hat{f}\subset \Omega, \|f\|_{L_{p,\omega}}<\infty \}.
\end{equation}
Similarly, we write $L^{\Omega}_{p,t}$ for the case $\omega(x)=\langle x\rangle^t$.
We use $\mathscr{S}^{\Omega}$ to denote the set of all Schwartz functions with Fourier supports contained in $\Omega$.
We use $(G,~ d\mu)$ to denote $G=\mathbb{R}^n$ with the usual Lebesgue measure, or $G=\mathbb{Z}^n$ with the counting measure.
In many cases, we do not distinguish between functions defined on the $\mathbb{R}^n$ or $\mathbb{Z}^n$.
Likewise, we do not distinguish the product map, convolution map defined by functions on $\mathbb{R}^n$ or $\mathbb{Z}^n$.

For two functions defined on $G$, we define the product map
\begin{equation}
(f\cdot g)(x)=f(x)g(x)
\end{equation}
and the convolution map
\begin{equation}
(f\ast g)(x)=\int_{G}f(x-y)g(y)d\mu(y).
\end{equation}

We use $l_{q_1,s_1}\cdot l_{q_2,s_2}\subset l_{q,s}$ and $l_{q_1,s_1}\ast l_{q_2,s_2}\subset l_{q,s}$ to denote the relationship
\begin{equation*}
\|f\cdot g\|_{l_{q,s}}\lesssim \|f\|_{l_{q_1,s_1}}\|g\|_{l_{q_2,s_2}}
\end{equation*}
and
\begin{equation*}
\|f\ast g\|_{l_{q,s}}\lesssim \|f\|_{l_{q_1,s_1}}\|g\|_{l_{q_2,s_2}}
\end{equation*}
respectively, for all $f$ and $g$ defined on $\mathbb{Z}^n$.

Now, we are in a position to state our main results. We will state the
results in more general multi-linear versions, but for simplicity only give
the detailed proofs for the bilinear cases, since the proofs for the general
cases are essential the same. Also, for the sake of convenience, we use the
symbol $\coprod $ to represent multiple convolution, and the symbol $\prod $
to represent multiple product.

%\begin{theorem}\label{characterization of product, multiple case}
%Let $J\geq 2$ be an integer. Suppose $1\leq p\leq \infty$, $0<p_j, q, q_j\leq \infty$, $\omega,\omega_j, m,m_j\in \mathscr{P}({R^n})$ for $j=1,2,\cdots ,J$.
%Then
%\begin{enumerate}
%  \item
%\begin{equation}\label{product inequality, modulation space, multiple case}
%\prod_{j=1}^J \mathcal {M}_{p_j,q_j}^{m_j, \omega_j}\subset \mathcal {M}_{p,q}^{m, \omega}
%\end{equation}
%if and only if
%\begin{equation}
%\prod_{j=1}^J L^{\Gamma}_{p_j,\omega_j}\subset L_{p,\omega},~\coprod_{j=1}^J L^{\Gamma}_{q_j,m_j}\subset L_{q,m}
%\end{equation}
%if and only if
%\begin{equation}\label{discrete characterization, product inequality, modulation space, multiple case}
%\prod_{j=1}^J l_{p_j,\omega_j}\subset l_{p,\omega},~\coprod_{j=1}^J l_{q_j,m_j}\subset l_{q,m}.
%\end{equation}
%  \item
%\begin{equation}\label{convolution inequality, wiener space, multiple case}
%\coprod_{j=1}^J W_{p_j,q_j}^{m_j, \omega_j}\subset W_{p,q}^{m, \omega}
%\end{equation}
%if and only if
%\begin{equation}
%\prod_{j=1}^J L^{\Gamma}_{q_j,m_j}\subset L_{q,m},~\coprod_{j=1}^J L^{\Gamma}_{p_j,\omega_j}\subset L_{p,\omega}
%\end{equation}
%if and only if
%\begin{equation}\label{discrete characterization, convolution inequality, wiener space, multiple case}
%\prod_{j=1}^J l_{q_j,m_j}\subset l_{q,m},~\coprod_{j=1}^J l_{p_j,\omega_j}\subset l_{p,\omega}.
%\end{equation}
%\end{enumerate}
%\end{theorem}

\begin{theorem}\label{characterization of product, multiple case}
Let $J\geq 2$ be an integer. Suppose $1\leq p\leq \infty$, $0<p_j, q, q_j\leq \infty$, $\omega,\omega_j, m,m_j\in \mathscr{P}({R^n})$ for $j=1,2,\cdots ,J$.
Let $\Gamma$ be a compact subset of $\mathbb{R}^n$ with non-empty interior.
Then
\begin{enumerate}
 \item
 \label{product inequality, modulation space, multiple case}$
  \prod_{j=1}^J \mathcal {M}_{p_j,q_j}^{m_j, \omega_j}\subset \mathcal {M}_{p,q}^{m, \omega}
  $
  if and only if one of the following statements (a) and
  (b) holds:
    \begin{enumerate}
     \item
     $\prod_{j=1}^J L^{\Gamma}_{p_j,\omega_j}\subset L_{p,\omega},~\coprod_{j=1}^J L^{\Gamma}_{q_j,m_j}\subset L_{q,m},$
     \item
     \label{discrete characterization, product inequality, modulation space, multiple case}
     $\prod_{j=1}^J l_{p_j,\omega_j}\subset l_{p,\omega},~\coprod_{j=1}^J l_{q_j,m_j}\subset l_{q,m}$.
   \end{enumerate}
 \item
  \label{convolution inequality, wiener space, multiple case}
  $\coprod_{j=1}^J W_{p_j,q_j}^{m_j, \omega_j}\subset W_{p,q}^{m, \omega}$
  if and only if one of the following statements (a) and
  (b) holds:
    \begin{enumerate}
     \item $\prod_{j=1}^J L^{\Gamma}_{q_j,m_j}\subset L_{q,m},~\coprod_{j=1}^J L^{\Gamma}_{p_j,\omega_j}\subset L_{p,\omega},$
     \item
     \label{discrete characterization, convolution inequality, wiener space, multiple case}
     $\prod_{j=1}^J l_{q_j,m_j}\subset l_{q,m},~\coprod_{j=1}^J l_{p_j,\omega_j}\subset l_{p,\omega}$.
   \end{enumerate}
\end{enumerate}
\end{theorem}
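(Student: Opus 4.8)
The strategy is to trade the continuous short-time Fourier description of $\mathcal{M}_{p,q}^{m,\omega}$ for an equivalent uniform-frequency-decomposition norm, and then to reduce the product inclusion to scale-localized estimates on band-limited weighted Lebesgue spaces together with a summation over the frequency-cube indices. The first step is to record the norm equivalence
\[
\|f\|_{\mathcal{M}_{p,q}^{m,\omega}}\asymp\Big\|\big(\|\sigma_k(D)f\|_{L_{p,\omega}}\big)_{k\in\mathbb{Z}^n}\Big\|_{l_{q,m}},
\]
where the $\sigma_k(D)$ are the smooth Fourier projections attached to a partition of unity $\sum_k\sigma(\cdot-k)\equiv1$ subordinate to the unit-cube cover of $\mathbb{R}^n$. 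This uses only that $m,\omega\in\mathscr{P}(\mathbb{R}^n)$ are moderate, so that $m(\xi)\asymp m(k)$ on the $k$-th cube and the $x$-weight $\omega$ may be kept outside the frequency localization at the cost of a fixed constant; the equivalence is valid on the full range $0<p,q\leq\infty$ in the Galperin--Samarah framework \cite{Galperin_Samarah}. The analogous statement for $W_{p,q}^{m,\omega}$ (with the roles of the two variables exchanged) follows from the Fourier relations between Wiener amalgam and modulation spaces recorded above.

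For the sufficiency direction of part (1), I would expand $\prod_{j=1}^{J}f_j=\sum_{k_1,\dots,k_J}\prod_{j=1}^{J}\sigma_{k_j}(D)f_j$ and note that $\prod_j\sigma_{k_j}(D)f_j$ has Fourier support in $\sum_j(k_j+[0,1]^n)\subset(k_1+\dots+k_J)+[0,J]^n$, so the output band centered at $n$ receives contributions only from tuples with $k_1+\dots+k_J$ in a fixed bounded neighborhood of $n$. Modulating each factor back to the unit cube and applying the band-limited product inequality $\prod_jL^{\Gamma}_{p_j,\omega_j}\subset L_{p,\omega}$ (legitimate for any single fixed $\Gamma$ with nonempty interior, since the band-limited Lebesgue inequality does not depend on the choice of such $\Gamma$) bounds $\|\prod_j\sigma_{k_j}(D)f_j\|_{L_{p,\omega}}$ by $\prod_j\|\sigma_{k_j}(D)f_j\|_{L_{p_j,\omega_j}}$; summing this over the relevant tuples, where the hypothesis $p\geq1$ enters through the triangle inequality in $L_p$, produces the discrete convolution $\coprod_j\big(\|\sigma_\cdot(D)f_j\|_{L_{p_j,\omega_j}}\big)$ evaluated at $n$ up to a bounded shift. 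Feeding this into the equivalent discrete norm and invoking $\coprod_jl_{q_j,m_j}\subset l_{q,m}$ for the remaining $l_{q,m}$ summation yields $\|\prod_jf_j\|_{\mathcal{M}_{p,q}^{m,\omega}}\lesssim\prod_j\|f_j\|_{\mathcal{M}_{p_j,q_j}^{m_j,\omega_j}}$. The passage between statements (a) and (b) is a purely Lebesgue-space matter: a band-limited weighted $L_{p,\omega}$ inequality is equivalent to its discrete $l_{p,\omega}$ counterpart via sampling and periodization (Plancherel--P\'olya and Nikol'skii type estimates), which I would isolate beforehand as a lemma; this is exactly the bridge, announced in the abstract, between properties on the modulation/amalgam spaces and properties on weighted Lebesgue spaces.

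For necessity I would test the assumed inclusion against two families of inputs. To recover the physical-side inequality $\prod_jL^{\Gamma}_{p_j,\omega_j}\subset L_{p,\omega}$, take $g_j\in\mathscr{S}^{\Gamma}$ (dense in $L^{\Gamma}_{p_j,\omega_j}$): since $\widehat{g_j}$ meets only boundedly many cubes one has $\|g_j\|_{\mathcal{M}_{p_j,q_j}^{m_j,\omega_j}}\asymp\|g_j\|_{L_{p_j,\omega_j}}$, and $\prod_jg_j$ is again band-limited, so $\|\prod_jg_j\|_{\mathcal{M}_{p,q}^{m,\omega}}\asymp\|\prod_jg_j\|_{L_{p,\omega}}$, giving the claim. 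To recover the frequency-side convolution inequality, I would instead spread mass over many cubes: for finitely supported sequences $b_j$ on $\mathbb{Z}^n$ put $f_j=\sum_kb_j(k)M_k\phi$ with one fixed Schwartz bump $\phi$; modulation covariance of the short-time Fourier transform shows $|V_\phi f_j|$ is, up to a rapidly decaying envelope, the sequence $b_j$ placed on the $\xi$-lattice and of fixed Schwartz shape in $x$, so $\|f_j\|_{\mathcal{M}_{p_j,q_j}^{m_j,\omega_j}}\asymp\|b_j\|_{l_{q_j,m_j}}$, while $\prod_jf_j=\sum_n(\coprod_jb_j)(n)M_n\phi^{J}$ has the same structure with coefficient sequence $\coprod_jb_j$, whence $\|\coprod_jb_j\|_{l_{q,m}}\lesssim\prod_j\|b_j\|_{l_{q_j,m_j}}$, i.e. statement (b). Combined with the equivalence (a)$\Leftrightarrow$(b), necessity is complete; part (2) then follows by transference, since $\mathscr{F}$ turns the convolution inclusion for $W$ into a product inclusion for $\mathcal{M}$ with $p\leftrightarrow q$ and $\omega\leftrightarrow m$ interchanged, and the reflections $\tilde\omega(x)=\omega(-x)$ do not affect any Lebesgue or discrete inequality.

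I expect the delicate point to be the second necessity construction: establishing $\|f_j\|_{\mathcal{M}_{p_j,q_j}^{m_j,\omega_j}}\asymp\|b_j\|_{l_{q_j,m_j}}$ and the analogous two-sided bound for $\prod_jf_j$ requires controlling the off-diagonal interaction of the translated windows $M_k\phi$ by an almost-diagonal (Schur-type) estimate, uniformly over all weights in $\mathscr{P}(\mathbb{R}^n)$ and on the full quasi-Banach range $0<p_j,q_j\leq\infty$. A secondary point requiring care is the summation of the regrouped frequency bands in the sufficiency proof when the target exponent $p$ is close to $1$, together with the verification that the choices of $\Gamma$ and of the partition of unity are immaterial.
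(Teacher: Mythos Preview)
Your proposal is correct and follows essentially the same two-step reduction as the paper: first pass to the discrete frequency-decomposition norm and reduce the product inclusion to a band-limited weighted $L_p$ product together with a discrete weighted $l_q$ convolution (the paper's Proposition~2.6), then identify the band-limited Lebesgue statements with their sequence-space counterparts via sampling/periodization (Propositions~3.2--3.3), with part~(2) obtained by Fourier transference. One simplification worth noting: the difficulty you flag in the second necessity construction disappears if you choose the bump $\phi=h$ to have Fourier support in a cube of sidelength $<1$, as the paper does; then the modulates $M_k h$ are exactly separated by the $\Box_k$, so $\Box_k f_j=b_j(k)M_k h$ with no off-diagonal interaction and hence no Schur-type estimate is needed.
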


\begin{theorem}\label{characterization of convolution, multiple case}
Let $J\geq 2$ be an integer. Suppose $0< p,p_j,q,q_j\leq \infty$, $\omega,\omega_j, m,m_j\in \mathscr{P}({R^n})$ for $j=1,2,\cdots ,J$.
Let $\Gamma$ be a compact subset of $\mathbb{R}^n$ with non-empty interior.
Then
\begin{enumerate}
 \item\label{convolution inequality, modulation space, multiple case}
 $\coprod_{j=1}^J \mathcal {M}_{p_j,q_j}^{m_j, \omega_j}\subset \mathcal {M}_{p,q}^{m, \omega}$
  if and only if one of the following statements (a) and
  (b) holds:
    \begin{enumerate}
     \item
   $\prod_{j=1}^J L^{\Gamma}_{q_j,m_j}\subset L_{q,m},~\coprod_{j=1}^J L^{\Gamma}_{p_j,\omega_j}\subset L_{p,\omega},$
     \item
     \label{discrete characterization, convolution inequality, modulation space, multiple case}
     $\prod_{j=1}^J l_{q_j,m_j}\subset l_{q,m},~\coprod_{j=1}^J l_{p_j,\omega_j}\subset l_{p,\omega}$.
     \end{enumerate}
 \item \label{product inequality, wiener space, multiple case}
  $\prod_{j=1}^J W_{p_j,q_j}^{m_j, \omega_j}\subset W_{p,q}^{m, \omega}$
  if and only if one of the following statements (a) and
  (b) holds:
    \begin{enumerate}
    \item
    $\prod_{j=1}^J L^{\Gamma}_{p_j,\omega_j}\subset L_{p,\omega},~\coprod_{j=1}^J L^{\Gamma}_{q_j,m_j}\subset L_{q,m},$
     \item \label{discrete characterization, product inequality, wiener space, multiple case}
     $\prod_{j=1}^J l_{p_j,\omega_j}\subset l_{p,\omega},~\coprod_{j=1}^J l_{q_j,m_j}\subset l_{q,m}$.
     \end{enumerate}
\end{enumerate}
\end{theorem}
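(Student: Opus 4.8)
The plan is to prove statement (1) in full and deduce statement (2) from it by means of the Fourier transform. The identities recorded just before Theorem~\ref{characterization of product, multiple case} show that $\mathscr{F}$ intertwines the Wiener amalgam spaces $W_{p,q}^{m,\omega}$ with the modulation spaces $\mathcal{M}_{q,p}$, the two exponents being exchanged and the spatial weight reflected; since $\mathscr{F}$ also sends a multiple product $\prod_{j=1}^{J}f_j$ to the multiple convolution $\coprod_{j=1}^{J}\widehat{f_j}$, and the reflection $f\mapsto\tilde f$ is a bounded bijection between a weighted space and the one carrying the reflected weight (which again lies in $\mathscr{P}(\mathbb{R}^n)$, while the conditions $L^{\Gamma}_{a,w}$ and $l_{a,w}$ are unaffected by reflecting $w$), the inclusion $\prod_{j}W_{p_j,q_j}^{m_j,\omega_j}\subset W_{p,q}^{m,\omega}$ becomes, after exchanging the roles of the two exponents and of the two families of weights, exactly the inclusion $\coprod_{j}\mathcal{M}_{p_j,q_j}^{m_j,\omega_j}\subset\mathcal{M}_{p,q}^{m,\omega}$ of statement (1), with the Lebesgue and discrete conditions matching up accordingly. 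It is also convenient to isolate, once and for all, the purely real-variable equivalences $\prod_{j}L^{\Gamma}_{a_j,w_j}\subset L_{a,w}\iff\prod_{j}l_{a_j,w_j}\subset l_{a,w}$ and $\coprod_{j}L^{\Gamma}_{a_j,w_j}\subset L_{a,w}\iff\coprod_{j}l_{a_j,w_j}\subset l_{a,w}$, valid for all exponents in $(0,\infty]$ and all weights of polynomial growth; these identify conditions (a) and (b) and reduce the theorem to proving ``$\iff$ (a)'' (equivalently ``$\iff$ (b)'').

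For statement (1) itself the engine is the uniform (not dyadic) frequency decomposition. Fix a smooth partition of unity $1=\sum_{k\in\mathbb{Z}^n}\rho(\xi-k)$ with $\rho$ supported in a fixed cube, put $P_k=\mathscr{F}^{-1}\rho(\cdot-k)\mathscr{F}$, and recall (or establish, using the moderateness of the weights throughout the quasi-Banach range) the norm equivalence
\[
\|f\|_{\mathcal{M}_{p,q}^{m,\omega}}\approx\big\|\{\|P_kf\|_{L_{p,\omega}}\}_{k\in\mathbb{Z}^n}\big\|_{l_{q,m}},\qquad 0<p,q\le\infty,
\]
where $m(\xi)$ has been replaced by $m(k)$ on the $k$-th cube. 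Since $\widehat{P_kf}$ is supported in a fixed cube translated by $k$, and a frequency translation is a spatial modulation, which by the moderateness of $\omega$ is an $L_{p,\omega}$-isomorphism with constants independent of $k$, each $P_kf$ may be regarded as an element of $L^{\Gamma}_{p,\omega}$. The geometric heart is almost orthogonality under convolution: $\widehat{(P_{k_1}f_1)\ast\cdots\ast(P_{k_J}f_J)}$ is supported in the intersection of the $J$ translated cubes, which is empty unless all the $k_j$ are within a fixed distance of one another, and consequently $P_k(f_1\ast\cdots\ast f_J)$ is a sum of at most $O(1)$ terms $(P_{k_1}f_1)\ast\cdots\ast(P_{k_J}f_J)$ with every $k_j$ within $O(1)$ of $k$ (and, after pulling out harmless modulations that move Fourier supports by $O(1)$, each of these is a convolution of functions band-limited to one fixed cube).

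Granting (a), sufficiency is then a three-step estimate: bound $\|(P_{k_1}f_1)\ast\cdots\ast(P_{k_J}f_J)\|_{L_{p,\omega}}$ by the band-limited convolution inequality $\coprod_{j}L^{\Gamma}_{p_j,\omega_j}\subset L_{p,\omega}$; sum over the $O(1)$-neighbourhood (a finite sum, harmless even for $p<1$); and take $l_{q,m}$-norms in $k$, invoking the discrete product inequality $\prod_{j}l_{q_j,m_j}\subset l_{q,m}$ (the bounded shifts in $k$ being absorbed by the moderateness of the $m_j$), which yields $\|f_1\ast\cdots\ast f_J\|_{\mathcal{M}_{p,q}^{m,\omega}}\lesssim\prod_{j}\|f_j\|_{\mathcal{M}_{p_j,q_j}^{m_j,\omega_j}}$. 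For necessity, assume the modulation inclusion and test on two families. Choosing each $f_j$ with $\widehat{f_j}$ supported in a single small cube makes all modulation norms in sight comparable to the corresponding $L_{p_j,\omega_j}$- and $L_{p,\omega}$-norms (the constants depending only on that cube and the local behaviour of the weights), whence the inclusion is precisely $\coprod_{j}L^{\Gamma}_{p_j,\omega_j}\subset L_{p,\omega}$; choosing instead ``combs'' $f_j=\sum_k a^{(j)}_k M_k\psi$ with $\widehat\psi$ a tiny bump at the origin kills every off-diagonal term of $f_1\ast\cdots\ast f_J$ by frequency disjointness on the lattice, leaving $\sum_k\big(\prod_j a^{(j)}_k\big)M_k\eta$ for a fixed Schwartz $\eta$; and since $\|\sum_k c_k M_k\eta\|_{\mathcal{M}_{p,q}^{m,\omega}}\approx\|c\|_{l_{q,m}}$ uniformly in $c$ (again by frequency disjointness and the moderateness of $m$), the inclusion forces $\prod_{j}l_{q_j,m_j}\subset l_{q,m}$. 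The equivalences from the first paragraph promote these to full (a) and (b), closing the cycle; the multilinear case $J>2$ runs verbatim.

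I expect the real difficulty to lie in two places. The genuine one is the real-variable equivalence between the band-limited Lebesgue inequalities and their discrete analogues, uniformly over the whole range $0<p,q\le\infty$ and over all weights of polynomial growth: this calls for a Plancherel--P\'olya style sampling of band-limited functions together with a bounded band-limited synthesis operator on the weighted spaces, with the passage from weights at lattice points to weights on cubes controlled by moderateness, and with the endpoints $p$ or $q$ equal to $\infty$ and the exponents below $1$ needing separate care. The second difficulty, more bookkeeping than conceptual, is the lower bounds in the necessity argument---verifying that a modulated comb really has modulation quasi-norm comparable to the $l_{q,m}$-norm of its coefficient sequence \emph{uniformly in the sequence}, which requires the window and the weights to be chosen so that no cancellation can degrade the estimate. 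Everything else amounts to organizing moderateness inequalities and $\min(1,p)$-type summations.
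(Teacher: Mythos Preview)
Your proposal is correct and follows essentially the same strategy as the paper: a first reduction (the paper's Proposition~2.7) using the discrete frequency decomposition, almost orthogonality under convolution, and comb test functions to reduce the modulation-space inclusion to the band-limited Lebesgue convolution condition together with the discrete product condition; followed by a second reduction (Propositions~3.2 and~3.3) using Shannon-type sampling to pass from band-limited Lebesgue inequalities to their discrete analogues. The Wiener amalgam case is likewise deduced via the Fourier transform, and you have correctly identified the sampling equivalence (your ``Plancherel--P\'olya'' step) as the main technical ingredient.
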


\begin{theorem}[Characterization of embedding]\label{characterization of embedding}
Suppose $0< p_1,p_2,q_1,q_2\leq \infty$, $\omega_j, m_j\in \mathscr{P}({R^n})$ for $j=1,2$.
Let $\Gamma$ be a compact subset of $\mathbb{R}^n$ with non-empty interior.
Then
\begin{enumerate}
 \item $\mathcal {M}_{p_1,q_1}^{\omega_1,m_1}\subset \mathcal {M}_{p_2,q_2}^{\omega_2,m_2}$ if and only if one of the following
  statements (a) and (b) holds:
     \begin{enumerate}
     \item  $L^{\Gamma}_{p_1,\omega_1}\subset L_{p_2,\omega_2},\,~L^{\Gamma}_{q_1,m_1}\subset L_{q_2,m_2},$
     \item  $l_{p_1,\omega_1}\subset l_{p_2,\omega_2},\,~l_{q_1,m_1}\subset l_{q_2,m_2}$.
     \end{enumerate}

 \item $W_{p_1,q_1}^{\omega_1,m_1}\subset W_{p_2,q_2}^{\omega_2,m_2}$ if and only if one of the following
  statements (a) and (b) holds:
    \begin{enumerate}
    \item $L^{\Gamma}_{p_1,\omega_1}\subset L_{p_2,\omega_2},\,~L^{\Gamma}_{q_1,m_1}\subset L_{q_2,m_2},$
    \item $l_{p_1,\omega_1}\subset l_{p_2,\omega_2},\,~l_{q_1,m_1}\subset l_{q_2,m_2}$.
    \end{enumerate}
\end{enumerate}
\end{theorem}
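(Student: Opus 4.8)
The plan is to reduce the embedding of modulation (or Wiener amalgam) spaces to a pair of decoupled conditions: a ``frequency-localized'' condition on weighted Lebesgue spaces and a ``global'' condition on weighted sequence spaces, and then to show that these two conditions are in fact equivalent to each other. The starting point is the standard discretization of the modulation space norm: using a smooth partition of unity $\{\sigma_k\}_{k\in\mathbb{Z}^n}$ subordinate to a cover of $\mathbb{R}^n$ by unit cubes $Q_k = k + [0,1]^n$, one has
\begin{equation*}
\|f\|_{\mathcal{M}_{p,q}^{\omega,m}} \asymp \Big\| \big\| \sigma_k(D) f \big\|_{L_{p,\omega}} \Big\|_{l_{k;q,m}},
\end{equation*}
where $\sigma_k(D)f = \mathscr{F}^{-1}(\sigma_k \hat f)$, and the weight $m$ may be replaced by its value $m(k)$ on the cube $Q_k$ up to constants because $m\in\mathscr{P}(\mathbb{R}^n)$ is $v$-moderate for a polynomial $v$. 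Because each $\sigma_k(D)f$ has Fourier support in a fixed dilate/translate of the unit cube, the inner norm $\|\sigma_k(D)f\|_{L_{p,\omega}}$ lives exactly in a space of the type $L^{\Gamma}_{p,\omega}$, after translating the frequency support back to a neighborhood of the origin; here one uses that modulation $M_\xi$ is an isometry on $L_{p,\omega}$ (the weight $\omega$ is unaffected by multiplication by a unimodular factor) and that the constants in the Bernstein-type inequalities on $L^{\Gamma}_{p,\omega}$ are uniform over translates of $\Gamma$ of bounded size, again by moderateness of $\omega$.

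With this discretization in hand, the proof splits into the expected implications. First, (a) $\Rightarrow$ embedding: given $f\in\mathcal{M}_{p_1,q_1}^{\omega_1,m_1}$, apply $L^{\Gamma}_{p_1,\omega_1}\subset L_{p_2,\omega_2}$ termwise to each $\sigma_k(D)f$ to control $\|\sigma_k(D)f\|_{L_{p_2,\omega_2}}$ by $\|\sigma_k(D)f\|_{L_{p_1,\omega_1}}$, then apply the sequence-space embedding $L^{\Gamma}_{q_1,m_1}\subset L_{q_2,m_2}$ — or rather its discrete shadow — to pass from the $l_{q_1,m_1}$ norm of the resulting sequence to the $l_{q_2,m_2}$ norm; summing up recovers $\|f\|_{\mathcal{M}_{p_2,q_2}^{\omega_2,m_2}}$. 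The implication using (b) instead of (a) is the same argument with $l_{p_j,\omega_j}$ in place of $L^{\Gamma}_{p_j,\omega_j}$, which is immediate because the discretized norm already has a sequence space on the outside, and the inner Lebesgue norms on a fixed compact frequency band are mutually comparable for a whole range of exponents only up to the constraint captured by the hypothesis. Conversely, for the necessity direction, I would test the embedding $\mathcal{M}_{p_1,q_1}^{\omega_1,m_1}\subset \mathcal{M}_{p_2,q_2}^{\omega_2,m_2}$ against two families of functions: (i) a single smooth bump with Fourier support in $\Gamma$, translated in physical space and superposed with coefficients forming an arbitrary sequence, which extracts the sequence-space embeddings $l_{p_1,\omega_1}\subset l_{p_2,\omega_2}$ and $l_{q_1,m_1}\subset l_{q_2,m_2}$; and (ii) a single function with Fourier support in $\Gamma$ but arbitrary profile (modulated to sit near any prescribed frequency), which extracts $L^{\Gamma}_{p_1,\omega_1}\subset L_{p_2,\omega_2}$ and $L^{\Gamma}_{q_1,m_1}\subset L_{q_2,m_2}$. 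Finally I would close the loop by showing (a) $\Leftrightarrow$ (b): the sequence embeddings and the frequency-localized Lebesgue embeddings are equivalent because on a fixed compact frequency set the $L_{p}$ norms are sandwiched between weighted $l_p$ norms of the ``samples'' via Plancherel–Pólya / Nikol'skii-type inequalities, with constants controlled by the moderateness of the weights.

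The main obstacle, I expect, will be handling the full quasi-Banach range $0<p_j,q_j\le\infty$ cleanly: the triangle inequality fails for $p<1$ or $q<1$, so the termwise estimates must be organized so that one only ever sums $\ell^p$-style (using $\|\sum a_k\|^{\min(1,p)} \le \sum \|a_k\|^{\min(1,p)}$ or almost-orthogonality of the $\sigma_k(D)$), and one must be careful that the discretized norm equivalence itself is valid in this range — which is why the excerpt invokes \cite{Galperin_Samarah}. A second, more technical point is verifying the uniformity of all Bernstein/Nikol'skii constants as the frequency cube is translated across $\mathbb{R}^n$; this is exactly where the polynomial-moderateness hypothesis $\omega,m\in\mathscr{P}(\mathbb{R}^n)$ enters, and it must be checked that replacing $\omega$ by a translate $T_\xi$-conjugated version changes the norm by at most a polynomial factor that is absorbed into the weight $m$ on the outside. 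Once these uniformities are established, the Wiener amalgam case (2) follows from the modulation case (1) by the duality-type relation $W_{p,q}^{\omega,m}=\mathscr{F}(\mathcal{M}_{q,p}^{\tilde\omega,m})$ recorded in the introduction, simply swapping the roles of the two pairs of exponents and weights, and noting that the hypotheses (a) and (b) are manifestly symmetric under this swap.
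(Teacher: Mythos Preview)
Your overall architecture matches the paper's exactly: discretize the modulation norm via the uniform decomposition $\{\Box_k\}$ (Proposition~2.1), prove a first reduction showing the embedding is equivalent to a band-limited Lebesgue condition on the $(p,\omega)$ side together with a sequence condition on the $(q,m)$ side (Proposition~2.8), then a second reduction identifying $L^\Gamma_{p_1,\omega_1}\subset L_{p_2,\omega_2}$ with $l_{p_1,\omega_1}\subset l_{p_2,\omega_2}$ via sampling/Plancherel--P\'olya (Proposition~3.1), and finally transport to Wiener amalgam spaces by the Fourier relation.

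There is one point where your description would lead you astray if executed literally. Your family (i), physical translates $\sum_k a_k\varphi(\cdot-k)$ with $\widehat\varphi$ supported in $\Gamma$, has all its Fourier support in a single cube, so its modulation norm collapses to $\|f\|_{L_{p,\omega}}\sim\|\vec a\|_{l_{p,\omega}}$; this extracts only $l_{p_1,\omega_1}\subset l_{p_2,\omega_2}$, not the $l_{q,m}$ embedding. To extract $l_{q_1,m_1}\subset l_{q_2,m_2}$ the paper uses the dual construction $f=\sum_k a_k\, e^{2\pi i k\cdot x}h(x)$ (modulations, not translations), for which $\|\Box_k f\|_{L_{p,\omega}}\sim a_k$ and hence $\|f\|_{M_{p,q}^{m,\omega}}\sim\|\vec a\|_{l_{q,m}}$. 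Likewise, your family (ii) extracts only $L^\Gamma_{p_1,\omega_1}\subset L_{p_2,\omega_2}$; the companion condition $L^\Gamma_{q_1,m_1}\subset L_{q_2,m_2}$ is not obtained directly from the modulation embedding but only after passing through $l_{q_1,m_1}\subset l_{q_2,m_2}$ and the second reduction. Once you separate the two test families along the physical/frequency axis in this way, your sketch goes through and coincides with the paper's proof.
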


This paper is organized as follows. In Section 2, we first show that,
similar to the one weight case, the modulation space $\mathcal {M}_{p,q}^{m,\omega}$ (with two weights)
has an alternative definition which is discrete on the frequency plane. We will
define the discrete weighted modulation space \ $M_{p,q}^{m,\omega }$ \ and
prove that the norms $\left\Vert \cdot\right\Vert _{M_{p,q}^{m,\omega }}$ \ and
$\left\Vert \cdot\right\Vert _{\mathcal{M}_{p,q}^{m,\omega }}$ are equivalent in
Proposition 2.1. We use this discrete type of definition to give first
reduction of our main theorems. Section 3 is devoted to the second reduction
of our theorems, which is based on the discretization of band limited
functions (functions with compact Fourier support). To describe our
procedure more clearly, in Section 2 we will give the first reduction for
the proof of Theorem 1.3 by showing that \ $M_{p_{1},q_{1}}^{m_{1},\omega
_{1}}\cdot M_{p_{2},q_{2}}^{m_{2},\omega _{2}}\subset M_{p,q}^{m,\omega }$
if and only if $L_{p_{1},\omega _{1}}^{\Gamma }\cdot L_{p_{2},\omega
_{2}}^{\Gamma }\subset L_{p,\omega }$ and $l_{q_{1},m_{1}}\ast
l_{q_{2},m_{2}}\subset l_{q,m}$ (see Proposition 2.6) for some compact set \
$\Gamma \subset
%TCIMACRO{\U{211d} }%
%BeginExpansion
\mathbb{R}
%EndExpansion
^{n}$. Then, in Section 3, we give the second reduction for the proof of
Theorem 1.3 by showing that $L_{p_{1},\omega _{1}}^{\Gamma }\cdot
L_{p_{2},\omega _{2}}^{\Gamma }\subset L_{p,\omega }$ $\ $if and only if $%
l_{p_{1},\omega _{1}}\cdot l_{p_{2},\omega _{2}}$ $\subset l_{p,\omega }$ \
(see Proposition 3.2). Clearly these two reductions complete the proof of
Theorem 1.3 in the bilinear case. Similarly, in Section 2 we will give the
first reduction for the proof of Theorem 1.4 by showing that \ $%
M_{p_{1},q_{1}}^{m_{1},\omega _{1}}\ast M_{p_{2},q_{2}}^{m_{2},\omega
_{2}}\subset M_{p,q}^{m,\omega }$ if and only if $L_{p_{1},\omega
_{1}}^{\Gamma }\ast L_{p_{2},\omega _{2}}^{\Gamma }\subset L_{p,\omega }$
and $l_{q_{1},m_{1}}\cdot l_{q_{2},m_{2}}\subset l_{q,m}$ (see Proposition
2.7) for some compact set \ $\Gamma \subset
%TCIMACRO{\U{211d} }%
%BeginExpansion
\mathbb{R}
%EndExpansion
^{n}$. Then, in Section 3, we give the second reduction for the proof of
Theorem 1.4 by showing that $L_{p_{1},\omega _{1}}^{\Gamma }\ast
L_{p_{2},\omega _{2}}^{\Gamma }\subset L_{p,\omega }$ $\ $if and only if $%
l_{p_{1},\omega _{1}}\ast l_{p_{2},\omega _{2}}\subset l_{p,\omega }$ (see
Proposition 3.2). These two reductions complete the proof of Theorem 1.4 in
the bilinear case. For the imbedding, in Section 2 we will show that $%
M_{p_{1},q_{1}}^{m_{1},\omega _{1}}\subset M_{p_{2},q_{2}}^{m_{2},\omega
_{2}}$ if and only if $L_{p_{1},\omega _{1}}^{\Gamma }\subset
L_{p_{2},\omega _{2}}^{\Gamma }$ and $l_{q_{1},m_{1}}\subset
l_{q_{2},m_{2}}\ $(see Proposition 2.8) and in Section 3 we will show $%
L_{p_{1},\omega _{1}}^{\Gamma }\subset L_{p_{2},\omega _{2}}^{\Gamma }$ if
and only if $l_{p_{1},\omega _{1}}\subset l_{p_{2},\omega _{2}}$ (see
Proposition 3.1). These two reductions finish the proof for Theorem 1.5.
Additionally, at the end of Section 3, some remarks are
given for comparison with the known results. Finally, some applications are
presented in Section 4. We only focus on the cases which can be
characterized in a more concrete way. The index groups for the product
inequalities, convolution inequalities, embedding relations on modulation
spaces with power weights are completely characterized in this section.

Throughout this paper, we will adopt the following notations. Let $C$ be a
positive constant that may depend on $n,\,p_{i},\,q_{i},\,s_{i},\,t_{i},%
\omega _{i},m_{i},\,(i=1,\,2)$. The notation $X\lesssim Y$ denotes the
statement that $X\leq CY$, and the notation $X\lesssim
_{a_{1},a_{2},...,a_{k}}Y$ denotes the statement $X\leq
C_{a_{1},a_{2},...,a_{k}}Y$ for some positive constant $%
C_{a_{1},a_{2},...,a_{k}}$, which may depend on the parameters $%
a_{1},a_{2},...,a_{k}$. The notation $X\sim Y$ means the statement $%
X\lesssim Y\lesssim X$, and the notation $X\sim _{a_{1},a_{2},...,a_{k}}Y$
denotes the statement $X\lesssim _{a_{1},a_{2},...,a_{k}}Y\lesssim
_{a_{1},a_{2},...,a_{k}}X$ For a multi-index $k=(k_{1},k_{2},...,k_{n})\in
\mathbb{Z}^{n}$, we denote $|k|_{\infty }:=\max_{i=1,2,...,n}|k_{i}|$, and $%
\langle k\rangle :=(1+|k|^{2})^{{\ 1}/{2}}$.

\section{First reduction}
As we know, the frequency-uniform localization techniques can be used to
discretize the modulation space by giving the space an alternative
definition. For this fact, the reader can see \cite{Tribel_modulation
space,C.Heil} for some details for the discretizations of modulation space
and Wiener amalgam space, or see \cite{Feichtinger_Banach convolution} for
the discretization of Wiener type space in a more general frame. Using the
similar techniques, we will discretize the norm of weighted modulation space
$\mathcal {M}_{p,q}^{m,\omega }$, so that our proofs for the main theorems can be
executed on a discrete version.

For $k\in \mathbb{Z}^{n},$ we denote by $Q_{k}$ the unit closed cube
centered at $k$. We write $Q: =Q_0$ for short.
The family $\{Q_{k}\}_{k\in \mathbb{Z}^{n}}$ constitutes a
decomposition of $\mathbb{R}^{n}$. Let $\rho :\mathbb{R}^{n}\rightarrow \lbrack 0,1]$ be a smooth function
satisfying $\rho (\xi )=1$ for $|\xi |_{\infty }\leq {1}/{2}$ and $\rho
(\xi )=0$ for $|\xi |\geq 1$. Let $\rho _{k}$ be a translation of $\rho $,
\begin{equation*}
\rho _{k}(\xi )=\rho (\xi -k),\text{ }k\in \mathbb{Z}^{n}.
\end{equation*}
Since $\rho _{k}(\xi )=1$ for $\xi \in Q_{k}$, \ we have that $\sum_{k\in
\mathbb{Z}^{n}}\rho _{k}(\xi )\geq 1$ for all $\xi \in \mathbb{R}^{n}$.
Denote
\begin{equation*}
\sigma _{k}(\xi )=\rho _{k}(\xi )\left( \sum_{l\in \mathbb{Z}^{n}}\rho
_{l}(\xi )\right) ^{-1},~~~~k\in \mathbb{Z}^{n}.
\end{equation*}
The sequence $\{\sigma _{k}(\xi )\}_{k\in \mathbb{Z}^{n}}$ constitutes a smooth
decomposition of  $\ \mathbb{R}^{n},$ where $\sigma _{k}(\xi )=\sigma (\xi -k)$. The frequency-uniform decomposition operators are defined by
\begin{equation*}
\Box _{k}:=\mathscr{F}^{-1}\sigma _{k}\mathscr{F}
\end{equation*}
for $k\in \mathbb{Z}^{n}$. With the family \ $\left\{ \Box _{k}\right\}
_{k\in \mathbb{Z}^{n}}$,\ an alternative norm of modulation space can be
defined by
\begin{equation}
\|f\|_{M_{p,q}^{\omega,m}}=\left( \sum_{k\in \mathbb{Z}^{n}}\Vert \Box _{k}f\Vert_{L_{p,\omega}}^{q}|m(k)|^q\right) ^{{1}/{q}},
\end{equation}
with a natural modification for $q=\infty $, where $\omega$ and $m$ are weight functions defined on $\mathbb{R}^n$ and $\mathbb{Z}^n$ respectively.

\begin{proposition}\label{proposition, equivalent norm}
Let $m, \omega \in \mathscr{P}(\mathbb{R}^n)$, $0<p,q\leq \infty$.
Then the norm $\|\cdot\|_{M_{p,q}^{m,\omega}}$
is an equivalent quasi-norm on $\mathcal {M}_{p,q}^{m,\omega}$ with usual modification if $q=\infty$.
\end{proposition}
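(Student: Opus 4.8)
The plan is to show that the discrete norm $\|\cdot\|_{M_{p,q}^{m,\omega}}$ is equivalent to the continuous norm $\|\cdot\|_{\mathcal{M}_{p,q}^{m,\omega}}$ by comparing both with a suitable sampled version of the short-time Fourier transform. The standard route (following \cite{Grochnig, Galperin_Samarah, Tribel_modulation space}) is: first establish that for a fixed window the continuous STFT norm is equivalent to a discretized STFT expression, and then relate the frequency-discretized STFT to the frequency-uniform decomposition operators $\Box_k$. First I would fix a nonzero $\phi\in\mathscr{S}$ with $\widehat{\phi}$ supported in a cube slightly larger than $Q$, and observe that $\Box_k f$ is essentially $\mathscr{F}^{-1}(\sigma_k\widehat{f})$, whose STFT behaviour is controlled on the frequency strip near $k$. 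The key pointwise identity is that $|V_\phi f(x,\xi)|$ is, up to convolution with a fixed Schwartz-tail weight in the $\xi$-variable, comparable in size to $\sum_k |\Box_k f(x)|\,\mathbf{1}_{|\xi-k|\lesssim 1}$ — this is what lets one pass between the integral over $\xi\in\mathbb{R}^n$ and the sum over $k\in\mathbb{Z}^n$.

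The key steps, in order, are as follows. \emph{Step 1:} Reduce to a convenient window. Since Definition \ref{Definition, modulation space, continuous form} is independent of the window (cited from \cite{Grochnig, Galperin_Samarah}), choose $\phi\in\mathscr{S}$ whose Fourier transform equals $1$ on $Q$ and is supported in a dilate $2Q$ (or similar), so that $\widehat{\Box_k f}=\sigma_k\widehat{f}$ and $V_\phi(\Box_k f)(x,\xi)$ is supported, in $\xi$, in a bounded neighbourhood of $k$; moreover on that neighbourhood $V_\phi f(x,\xi)$ and $V_\phi(\Box_k f)(x,\xi)$ coincide up to controllable error. \emph{Step 2:} Prove the pointwise domination $\|\Box_k f\|_{L_{p,\omega}}\lesssim \big(\,|V_\phi f(\cdot,\cdot)|*_\xi H\,\big)$ evaluated appropriately, where $H$ is a fixed integrable (indeed rapidly decaying) function on $\mathbb{R}^n_\xi$; this uses that $\Box_k f(x)$ can be written as an average of $M_\xi T_x\phi$-coefficients against a Schwartz kernel, combined with Minkowski's inequality in $L_{p,\omega}$ (valid for $p\geq 1$; for $p<1$ one replaces the convolution estimate by a $p$-triangle/$\ell^p$ summation argument as in \cite{Galperin_Samarah}). \emph{Step 3:} Insert the weights: using $m,\omega\in\mathscr{P}(\mathbb{R}^n)$, i.e.\ $v$-moderateness with $v$ polynomial, the weight $m(\xi)$ for $\xi$ near $k$ is comparable to $m(k)$ up to a polynomial factor that is absorbed into the rapid decay of $H$; similarly the $\omega$-weight is moderated and fits inside $L_{p,\omega}$-norms. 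Then take $L_{\xi;q,m}$-norms (resp.\ $\ell^q_{k}$ with weight $m(k)$) of both sides and use Young's inequality (or its $q<1$ substitute) to get $\|f\|_{M_{p,q}^{m,\omega}}\lesssim\|f\|_{\mathcal{M}_{p,q}^{m,\omega}}$. \emph{Step 4:} The reverse inequality, obtained symmetrically: write $V_\phi f(x,\xi)=\sum_k V_\phi(\Box_k f)(x,\xi)$ using $\sum\sigma_k\geq 1$ together with a finite-overlap partition, bound $|V_\phi(\Box_k f)(x,\xi)|$ by $\|\Box_k f\|_{L_{p,\omega}}$ times a Schwartz function of $(x,\xi-k)$ (after transferring the $\omega$-weight), and sum; the moderateness of $m,\omega$ and the almost orthogonality (bounded overlap of the supports in $\xi$) give $\|f\|_{\mathcal{M}_{p,q}^{m,\omega}}\lesssim\|f\|_{M_{p,q}^{m,\omega}}$.

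The main obstacle, I expect, is handling the full quasi-Banach range $0<p,q\leq\infty$ uniformly: for $p<1$ or $q<1$ Minkowski's integral inequality and Young's convolution inequality fail, so the convolution-with-Schwartz-tail arguments must be replaced by discretized $\ell^p$/$\ell^q$ summation estimates (exploiting that a Schwartz function lies in $\ell^\varepsilon$ of integer translates for every $\varepsilon>0$), which is exactly the technical content of \cite{Galperin_Samarah, Kobayashi}. A secondary nuisance is bookkeeping the two separate weights $m$ (on the frequency side, ultimately sampled at $\mathbb{Z}^n$) and $\omega$ (on the time side, staying continuous): one must check that the $v$-moderateness in $\mathscr{P}(\mathbb{R}^n)$ lets the weight slide past the Schwartz kernels with only polynomial loss, which is harmless against rapid decay. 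Once these points are in place the equivalence follows, and as noted in the excerpt it reduces the main theorems to statements about $M_{p,q}^{m,\omega}$.
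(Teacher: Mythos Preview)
Your plan is correct and matches the paper's approach: fix a window with compactly supported Fourier transform, use the identity $\Box_k f=\mathscr{F}^{-1}\sigma_k T_\xi\widehat{\phi}\mathscr{F}f$ for $\xi\in Q_k$, exploit $m(\xi)\sim m(k)$ on $Q_k$ by moderateness, and pass between integral and sum via finite overlap. Two small remarks. First, for $0<p<1$ the paper does not discretize as you propose but instead proves a self-contained weighted convolution inequality for band-limited functions (their Lemma~\ref{lemma, convolution for p<1}: if $f\in L_{p,\omega}^{\Omega}$, $g\in L_{p,v}^{\Omega'}$ with $\omega$ $v$-moderate, then $\||f|\ast|g|\|_{L_{p,\omega}}\lesssim\|f\|_{L_{p,\omega}}\|g\|_{L_{p,v}}$), which lets both directions run exactly as in the $p\ge1$ case with Young replaced by this lemma; this is cleaner than the $\ell^p$-summation route. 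Second, your Step~4 statement ``bound $|V_\phi(\Box_k f)(x,\xi)|$ by $\|\Box_k f\|_{L_{p,\omega}}$ times a Schwartz function of $(x,\xi-k)$'' is not a pointwise inequality as written; the paper (and the correct version of your argument) first takes the $L_{x;p,\omega}$-norm of $V_\phi f(\cdot,\xi)$ via the convolution estimate, obtaining $\sum_{|l-k|\le c}\|\Box_l f\|_{L_{p,\omega}}$ for $\xi\in Q_k$, and only then integrates in $\xi$.
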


To verify the proposition, we first need the following weighted convolution inequality.

\begin{lemma}[\textbf{Weighted convolution in $L_{p}$ with $p<1$}]\label{lemma, convolution for p<1}
Suppose that $\omega\in \mathscr{P}(\mathbb{R}^n)$ is a $v$-moderate weight. Let $0<p<1$ , $\Omega$, $\Omega'$  be compact subsets of $\mathbb{R}^n$.
Suppose $f\in L^{\Omega}_{p,\omega}$, $g\in L_{p, v}^{\Omega'}$. Then
there exists a constant $C>0$ which depends only on the diameters of $\Omega$, $\Omega'$ and the exponent $p$,
such that
\begin{equation}\label{for proof, 2}
\Vert |f|\ast |g|\Vert _{L_{p, \omega}}\leq C\Vert f\Vert _{L_{p, \omega}}\Vert g\Vert _{L_{p, v}}.
\end{equation}
\end{lemma}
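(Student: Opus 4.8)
The plan is to reduce \eqref{for proof, 2} to an elementary weighted convolution inequality for sequences on $\mathbb{Z}^n$, exploiting crucially that $f$ and $g$ are band-limited. First I would replace $f$ and $g$ by suitable modulations, which alters neither $|f|,|g|$ nor the norms $\|f\|_{L_{p,\omega}},\|g\|_{L_{p,v}}$ but translates the Fourier supports, so that $\Omega,\Omega'$ may be assumed centred at the origin and every constant below depends only on $\mathrm{diam}(\Omega)$, $\mathrm{diam}(\Omega')$, $p$, $n$, and the polynomial $v$. The main ingredient would be the classical sub-mean value inequality for band-limited functions (see e.g.\ \cite{Tribel_modulation space}): writing $f=f\ast\psi$ with $\psi\in\mathscr{S}$ and $\hat\psi\equiv 1$ on $\Omega$, one has for every $M>0$
\[
\Big(\sup_{x\in Q_k}|f(x)|\Big)^{p}\le C\sum_{j\in\mathbb{Z}^n}\langle k-j\rangle^{-M}\,\|\chi_{Q_j}f\|_{L_p}^{p},\qquad k\in\mathbb{Z}^n,
\]
and likewise for $g$. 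Setting $a_k:=\sup_{Q_k}|f|$, $c_k:=\sup_{Q_k}|g|$, and using that $\omega$ is $v$-moderate with $v$ polynomial (so $\omega(x)\sim\omega(k)$ on $Q_k$, while $\omega(k)\le Cv(k-j)\omega(j)$ with $v(k-j)^{p}\lesssim\langle k-j\rangle^{D}$ for some $D\ge 0$), I would multiply the display by $\omega(k)^p$, sum over $k$, and choose $M>D+n$ to obtain
\[
\|a\|_{l_{p,\omega}}\lesssim\|f\|_{L_{p,\omega}},\qquad \|c\|_{l_{p,v}}\lesssim\|g\|_{L_{p,v}} .
\]

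Next I would transfer the convolution to $\mathbb{Z}^n$. From $|f|\le\sum_k a_k\chi_{Q_k}$, $|g|\le\sum_l c_l\chi_{Q_l}$ and $\chi_{Q_k}\ast\chi_{Q_l}\le\sum_{|m-k-l|_\infty\le 1}\chi_{Q_m}$ one gets the pointwise bound $|f|\ast|g|\le\sum_m d_m\chi_{Q_m}$ with $d_m=\sum_{|r|_\infty\le 1}(a\ast c)(m-r)$, where $\ast$ now denotes convolution on $\mathbb{Z}^n$. Using once more that $\omega$ is comparable to $\omega(m)$ on $Q_m$ and varies slowly under unit shifts, together with the $p$-subadditivity $(s+t)^{p}\le s^{p}+t^{p}$ for $0<p<1$, I would obtain
\[
\big\||f|\ast|g|\big\|_{L_{p,\omega}}^{p}\;\lesssim\;\sum_m d_m^{p}\,\omega(m)^{p}\;\lesssim\;\sum_m\big((a\ast c)(m)\big)^{p}\omega(m)^{p}\;=\;\|a\ast c\|_{l_{p,\omega}}^{p}.
\]
Finally I would prove the discrete weighted convolution inequality $\|a\ast c\|_{l_{p,\omega}}\lesssim\|a\|_{l_{p,\omega}}\|c\|_{l_{p,v}}$ for non-negative sequences and $0<p<1$; it follows at once from $\big(\sum_k a_kc_{m-k}\big)^{p}\le\sum_k a_k^{p}c_{m-k}^{p}$, the moderateness bound $\omega(m)^{p}\le Cv(m-k)^{p}\omega(k)^{p}$, and a summation in $k$ and $m$ (Tonelli) that separates the two sums. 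Chaining these displayed estimates with the bounds on $\|a\|_{l_{p,\omega}}$ and $\|c\|_{l_{p,v}}$ from the first step yields \eqref{for proof, 2}.

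The only genuinely non-routine point is the first step: the sub-mean value inequality must carry the $L_p$ quasi-norm with $p<1$ on the right-hand side and have constants controlled solely by the diameter of the Fourier support. It cannot be deduced from $f=f\ast\psi$ and the triangle inequality alone, since $\big(\int h\big)^{p}\le\int h^{p}$ fails for continuous measures; band-limitedness is used essentially, for instance through the Peetre maximal function $f^{*}_{a}(x)=\sup_{z\in\mathbb{R}^n}|f(x-z)|\langle z\rangle^{-a}$ and the pointwise domination of $f^{*}_{a}(x)^{p}$ by the Hardy--Littlewood maximal function of $|f|^{p}$, valid for $a>n/p$. Granting this, incorporating the two weights via $v$-moderateness and passing to the discrete convolution are purely mechanical.
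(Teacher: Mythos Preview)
Your argument is correct, but the paper's proof is considerably shorter and uses a different idea. Instead of discretizing $f$ and $g$ separately, the paper fixes $x$ and observes that the integrand $H(y)=f(x-y)g(y)$ is itself band-limited: its Fourier transform in $y$ is supported in $\Omega'-\Omega$. One may therefore apply the Nikol'skii-type embedding $L_{p}^{\Omega'-\Omega}\subset L_{1}$ (valid for $0<p<1$ with a constant depending only on the diameter of $\Omega'-\Omega$ and $p$) to obtain
\[
\int_{\mathbb{R}^n}|f(x-y)g(y)|\,dy\;\lesssim\;\Big(\int_{\mathbb{R}^n}|f(x-y)g(y)|^{p}\,dy\Big)^{1/p}.
\]
Multiplying by $\omega(x)$, using $\omega(x)\lesssim\omega(x-y)v(y)$ inside the $p$-th power, and then taking the $L_{x;p}$-norm with Fubini gives the result in three lines.

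What each approach buys: the paper's trick exploits that \emph{the product of two band-limited functions is again band-limited}, so a single application of $L_p\hookrightarrow L_1$ on compact Fourier support does all the work; no discretization, no Peetre maximal function, no discrete convolution inequality. Your route is more hands-on and, while longer, has the virtue that it isolates reusable pieces (the sampling estimate $\|\{\sup_{Q_k}|f|\}\|_{l_{p,\omega}}\lesssim\|f\|_{L_{p,\omega}}$ and the discrete weighted Young inequality for $p<1$); these are essentially the tools that drive the Second Reduction in Section~3 of the paper, so you have effectively re-derived part of that machinery in advance.
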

\begin{proof}
This lemma can be found in [16] when \ $\omega =1$ and \ $v=1$.
By a standard limiting argument, we only need to verify  (\ref{for proof, 2}) for $f\in \mathscr{S}^{\Omega}$ and $g\in \mathscr{S}^{\Omega'}$.
In this case, we observe that $H(y)=f(x-y)g(y)$ is a Schwartz function with Fourier support in $\Omega'-\Omega$ for every $x\in \mathbb{R}^n$.
We use the embedding relation $L_p^{\Omega'-\Omega}\subset L_1^{\Omega'-\Omega}$ to deduce
\begin{equation}
\int_{\mathbb{R}^n}|f(x-y)g(y)|dy\lesssim \left(\int_{\mathbb{R}^n}|f(x-y)g(y)|^pdy\right)^{1/p}.
\end{equation}
By the assumption $\omega(x)\lesssim \omega(x-y)v(y)$, it follows that
\begin{equation}
\left(\int_{\mathbb{R}^n}|f(x-y)g(y)|dy\right)\omega(x)\lesssim \left(\int_{\mathbb{R}^n}|f(x-y)\omega(x-y)g(y)v(y)|^pdy\right)^{1/p}.
\end{equation}
Taking the $L_{x;p}$-norm on both sides of the above inequality, we then use the Fubini theorem to obtain the desired conclusion.
\end{proof}

Now, we give the proof of Proposition \ref{proposition, equivalent norm}.
\\
\textbf{Proof of Proposition \ref{proposition, equivalent norm}.} We only give the proof for $q<\infty$, since the $q=\infty$ case can be handled similarly.
By the independence of window function in Definition \ref{Definition, modulation space, continuous form}, we can assume that
$\widehat{\phi}\subset B(0, 10\sqrt{n})$ and $\widehat{\phi}(\xi)=1$ in $B(0, 5\sqrt{n})$. For any $f\in \mathcal {S}'$,
\begin{equation}
\Box_kf=\mathscr{F}^{-1}\sigma_k\mathscr{F}f=\mathscr{F}^{-1}\sigma_kT_{\xi}\widehat{\phi}\mathscr{F}f
\end{equation}
for $\xi \in Q_k$. Using Young's inequality or Lemma \ref{lemma, convolution for p<1}, we deduce
\begin{equation}
\begin{split}
\|\Box_kf\|_{L_{p,\omega}}
\lesssim &
\|\mathscr{F}^{-1}\sigma_k\|_{L_{p\wedge 1},v}\|\mathscr{F}^{-1}T_{\xi}\widehat{\phi}\mathscr{F}f\|_{L_{p,\omega}}
\\
\lesssim &
\|\mathscr{F}^{-1}T_{\xi}\widehat{\phi}\mathscr{F}f\|_{L_{p,\omega}}
\end{split}
\end{equation}
for $\xi \in Q_k$. Observing that $m(\xi)\sim m(k)$ for $\xi\in Q_k$, we deduce
\begin{equation}
\begin{split}
\|\Box_kf\|_{L_{p,\omega}}|m(k)|
\lesssim &
\left(\int_{Q_k}\|\mathscr{F}^{-1}T_{\xi}\widehat{\phi}\mathscr{F}f\|^q_{L_{p,\omega}}|m(\xi)|^qd\xi\right)^{1/q}
\\
= &
\left(\int_{Q_k}\|V_{\phi}f(x,\xi)\|^q_{L_{p,\omega}}|m(\xi)|^qd\xi\right)^{1/q}.
\end{split}
\end{equation}
Taking $l_q$-norm on both sides of the above inequality, we obtain
\begin{equation}
\|f\|_{M_{p,q}^{\omega,m}}\lesssim \|f\|_{\mathcal {M}_{p,q}^{\omega,m}}.
\end{equation}
On the other hand, for $\xi \in Q_k$,
\begin{equation}
V_{\phi}f(x, \xi)=(\mathscr{F}^{-1}T_{\xi}\widehat{\phi}\mathscr{F}f)(x)=\sum_{|l-k|\leq c}(\mathscr{F}^{-1}\sigma_lT_{\xi}\widehat{\phi}\mathscr{F}f)(x).
\end{equation}
Thus
\begin{equation}
\begin{split}
\|V_{\phi}f(x,\xi)\|_{L_{p,\omega}}
\lesssim &
\|\sum_{|l-k|\leq c}(\mathscr{F}^{-1}\sigma_lT_{\xi}\widehat{\phi}\mathscr{F}f)(x)\|_{L_{p,\omega}}
\\
\lesssim &
\sum_{|l-k|\leq c}\|\mathscr{F}^{-1}[\widehat{\phi}(\cdot -\xi)]\|_{L_{p\wedge 1,v}}\|\Box_l f\|_{L_{p, \omega}}
\\
\lesssim &
\sum_{|l-k|\leq c}\|\Box_l f\|_{L_{p, \omega}}.
\end{split}
\end{equation}
for $\xi \in Q_k$.
Taking the $q$-power integration over $\xi\in Q_k$ with weight $m$, we deduce
\begin{equation}
\begin{split}
\int_{Q_k}\|V_{\phi}(x,\xi)\|^q_{L_{p,\omega}}|m(\xi)|^qd\xi
\lesssim
\sum_{|l-k|\leq c}\|\Box_l f\|^q_{L_{p, \omega}}|m(l)|^q.
\end{split}
\end{equation}
Summation over $k$ leads to
\begin{equation}
\|f\|_{\mathcal {M}_{p,q}^{m,\omega}}\lesssim \|f\|_{M_{p,q}^{m,\omega}}.
\end{equation}
Thanks to the above Proposition \ref{proposition, equivalent norm}, we will use the discrete form norm $\|\cdot\|_{M_{p,q}^{\omega,m}}$
instead of the continuous form norm $\|\cdot\|_{\mathcal {M}_{p,q}^{m,\omega}}$
throughout the rest of our paper.

Now, we begin the process of discretizing our main theorems. As mentioned
before, the conclusion about Wiener amalgam space can be deduced by the
corresponding conclusion of modulation space, and the multi-linear case can
be induced by the bilinear case. So we only give the detailed proof for the
bilinear case associated with modulation space. Firstly, we point out that
for a tempered distribution $f\in \mathscr{S}^{\prime }$ with compact
Fourier support $K$, the modulation space norm $\Vert f \Vert
_{M_{p,q}^{m,\omega }}$ is equivalent to the weighted $L^{p}$ norm $\Vert
f \Vert _{L_{p,\omega }}$.

\begin{lemma}\label{lemma, equivalent norm, compact support}
Let $K$ be a compact subset of $\mathbb{R}^n$, and $f$ be a tempered distribution with Fourier support contained in $K$.
Then $f\in M_{p,q}^{m, \omega}$ if and only if $f\in L^{p, \omega}$, and
\begin{equation}
\|f\|_{M_{p,q}^{m, \omega}}\sim_K  \|f\|_{L_{p,\omega}}.
\end{equation}
\end{lemma}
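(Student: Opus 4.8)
The plan is to use the Fourier-support hypothesis to collapse the series defining $\|f\|_{M^{m,\omega}_{p,q}}$ to a finite sum over an index set whose size is controlled by $K$, and then to compare the resulting finite-dimensional (quasi-)norms with $\|f\|_{L_{p,\omega}}$.

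First I would isolate the relevant indices. Since $\rho(\xi)=0$ for $|\xi|\geq 1$, each $\sigma_k$ is supported in the ball $B(k,1)$, so the set
\begin{equation*}
\Lambda_K:=\{k\in\mathbb{Z}^n:\ \textbf{supp}\,\sigma_k\cap K\neq\emptyset\}
\end{equation*}
is finite, with $\#\Lambda_K$ bounded in terms of $n$ and $\mathrm{diam}(K)$. Because $\sum_{k\in\mathbb{Z}^n}\sigma_k\equiv 1$ and $\widehat{f}$ is supported in $K$, we get $\Box_k f=0$ for $k\notin\Lambda_K$ and $f=\sum_{k\in\Lambda_K}\Box_k f$, a finite sum.

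For the estimate $\|f\|_{M^{m,\omega}_{p,q}}\lesssim_K\|f\|_{L_{p,\omega}}$, I would write $\Box_k f=(\mathscr{F}^{-1}\sigma_k)\ast f$, where $\mathscr{F}^{-1}\sigma_k$ is band-limited with Fourier support of diameter independent of $k$. Exactly as in the proof of Proposition \ref{proposition, equivalent norm} --- Young's inequality if $p\geq 1$, Lemma \ref{lemma, convolution for p<1} if $p<1$, together with the $v$-moderateness of $\omega$ --- this gives $\|\Box_k f\|_{L_{p,\omega}}\lesssim\|\mathscr{F}^{-1}\sigma_k\|_{L_{p\wedge 1,v}}\|f\|_{L_{p,\omega}}$, and since $|\mathscr{F}^{-1}\sigma_k|=|\mathscr{F}^{-1}\sigma|\in\mathscr{S}$ is independent of $k$, the implicit constant is uniform in $k$. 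Summing the $q$-th powers over $\Lambda_K$ (with the obvious sup-modification when $q=\infty$) and absorbing $\big(\sum_{k\in\Lambda_K}|m(k)|^q\big)^{1/q}$ into the $K$-dependent constant gives the bound. For the converse, the (quasi-)triangle inequality applied to $f=\sum_{k\in\Lambda_K}\Box_k f$ yields $\|f\|_{L_{p,\omega}}\lesssim_K\big(\sum_{k\in\Lambda_K}\|\Box_k f\|_{L_{p,\omega}}^{p\wedge 1}\big)^{1/(p\wedge 1)}$; comparing the $\ell^{p\wedge 1}$-sum with the $\ell^q$-sum over the finite set $\Lambda_K$, and using that the moderate weight $m$ is bounded below on $\Lambda_K$, then gives $\|f\|_{L_{p,\omega}}\lesssim_K\|f\|_{M^{m,\omega}_{p,q}}$.

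I expect the only real point of care to be the uniform-in-$k$ convolution bound in the quasi-Banach regime $0<p<1$, where Young's inequality is unavailable; this is handled by Lemma \ref{lemma, convolution for p<1}, which applies precisely because $\mathscr{F}^{-1}\sigma_k$ is band-limited and its Fourier support has $k$-independent size. The remaining steps are elementary manipulations with finite sums and polynomial weights.
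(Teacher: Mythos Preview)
Your proposal is correct and is exactly the argument the paper has in mind: the paper does not spell out a proof but only remarks that it ``is based on the finite covering on $K$'' and refers to the unweighted case in Cordero--Nicola, which is precisely your reduction to the finite index set $\Lambda_K$ together with the uniform $L_{p,\omega}$-boundedness of $\Box_k$ (via Young or Lemma~\ref{lemma, convolution for p<1}). Your treatment of the quasi-Banach range $0<p<1$ is the right point of care and is handled as in the paper.
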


The proof is based on the finite covering on $K$. We leave its detail to the
reader. However, one can see \cite[Lemma 3.2]{Cordero_Nicola_Sharpness} for
the proof in unweighted case.

We also need the following technical lemma.

\begin{lemma}\label{lemma, independence of Gamma, product}
Suppose $0<p, p_j\leq \infty$, $\omega,\omega_j \in \mathscr{P}({R^n})$ for $j=1,2$.
Let $\Omega$, $\Gamma$ be compact subsets of $\mathbb{R}^n$ with non-empty interior.
Then
\begin{equation}
L^{\Omega}_{p_1, \omega_1}\cdot L^{\Omega}_{p_2, \omega_2} \subset L_{p, \omega}
\end{equation}
if and only if
\begin{equation}
L^{\Gamma}_{p_1, \omega_1}\cdot L^{\Gamma}_{p_2, \omega_2} \subset L_{p, \omega}.
\end{equation}
\end{lemma}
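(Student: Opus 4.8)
The plan is to exploit the fact that the bilinear product map is local in frequency only up to the sumset $\Omega+\Omega$, together with a scaling/covering argument: any compact set with non-empty interior contains a small cube, and any compact set is covered by finitely many translates of a (possibly dilated) small cube. By symmetry it suffices to prove one implication, say that $L^{\Gamma}_{p_1,\omega_1}\cdot L^{\Gamma}_{p_2,\omega_2}\subset L_{p,\omega}$ implies $L^{\Omega}_{p_1,\omega_1}\cdot L^{\Omega}_{p_2,\omega_2}\subset L_{p,\omega}$; the reverse follows by swapping the roles of $\Omega$ and $\Gamma$. First I would reduce to cubes. Since $\Gamma$ has non-empty interior it contains some closed cube $Q(\xi_0,r)$ of side $r>0$ centered at $\xi_0$. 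I claim the hypothesis on $\Gamma$ forces the same product embedding for functions with Fourier support in $Q(\xi_0,r)$: indeed if $\mathbf{supp}\,\hat f,\mathbf{supp}\,\hat g\subset Q(\xi_0,r)\subset\Gamma$ then $f\in L^{\Gamma}_{p_1,\omega_1}$, $g\in L^{\Gamma}_{p_2,\omega_2}$, and the conclusion $\|fg\|_{L_{p,\omega}}\lesssim\|f\|_{L_{p_1,\omega_1}}\|g\|_{L_{p_2,\omega_2}}$ is exactly what we want. So without loss of generality the hypothesis may be taken on a small cube $Q(\xi_0,r)$.

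Next I would remove the center $\xi_0$ by modulation. If $\mathbf{supp}\,\hat f\subset Q(0,r)$, set $\tilde f = M_{\xi_0}f$, so $\mathbf{supp}\,\widehat{\tilde f}\subset Q(\xi_0,r)$, and similarly $\tilde g = M_{-\xi_0}g$ has $\mathbf{supp}\,\widehat{\tilde g}\subset Q(-\xi_0,r)$; but the cube $Q(-\xi_0,r)$ may not lie in $\Gamma$, so this needs a little care. Cleaner: use $\tilde f=M_{\xi_0}f$ and $\tilde g=M_{\xi_0}g$, both supported in $Q(\xi_0,r)$; then $\tilde f\tilde g = M_{2\xi_0}(fg)$. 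Since $|M_{\eta}h(x)|=|h(x)|$ pointwise, the weighted $L_p$-norms are unchanged: $\|\tilde f\|_{L_{p_1,\omega_1}}=\|f\|_{L_{p_1,\omega_1}}$, $\|\tilde g\|_{L_{p_2,\omega_2}}=\|g\|_{L_{p_2,\omega_2}}$, and $\|\tilde f\tilde g\|_{L_{p,\omega}}=\|fg\|_{L_{p,\omega}}$. Hence the product embedding for Fourier supports in $Q(0,r)$ follows from the hypothesis on $Q(\xi_0,r)\subset\Gamma$. By a rescaling $f\mapsto f(\lambda\,\cdot)$ (which changes Fourier supports by $1/\lambda$) one checks the embedding is in fact insensitive to the side length $r$, picking up only constants depending on $r$ and the weights through their polynomial moderateness; alternatively one keeps $r$ fixed and only uses translations in the next step. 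Either way, we obtain: for some fixed small cube $Q_*=Q(0,r_*)$, the embedding $L^{Q_*}_{p_1,\omega_1}\cdot L^{Q_*}_{p_2,\omega_2}\subset L_{p,\omega}$ holds.

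Finally I would pass from the small cube to the arbitrary compact set $\Omega$ by a finite covering and an almost-orthogonality (triangle-inequality) argument. Cover $\Omega$ by finitely many cubes $Q(\eta_i,r_*)$, $i=1,\dots,N$, and pick a smooth partition of unity $\{\chi_i\}$ subordinate to this cover with $\sum_i\chi_i=1$ on a neighborhood of $\Omega$, where $\hat\chi_i$ — rather, $\chi_i$ — is supported in $Q(\eta_i,r_*)$ and each $\mathscr{F}^{-1}\chi_i$ is a fixed Schwartz function (a modulated dilate of one bump). Given $f\in L^{\Omega}_{p_1,\omega_1}$, $g\in L^{\Omega}_{p_2,\omega_2}$, write $f=\sum_i f_i$, $g=\sum_j g_j$ with $f_i=\mathscr{F}^{-1}(\chi_i\hat f)$, $g_j=\mathscr{F}^{-1}(\chi_j\hat g)$, so that $\mathbf{supp}\,\hat f_i\subset Q(\eta_i,r_*)$. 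Then $fg=\sum_{i,j}f_ig_j$, a finite sum, and for each pair $(i,j)$ we apply the small-cube embedding after translating both frequency supports to the origin by a common modulation as in the previous step (here $Q(\eta_i,r_*)$ and $Q(\eta_j,r_*)$ are different, so one modulates $f_i$ by $M_{-\eta_i}$ and $g_j$ by $M_{-\eta_j}$, giving supports in $Q(0,r_*)$ and reducing $\|f_ig_j\|_{L_{p,\omega}}\lesssim\|f_i\|_{L_{p_1,\omega_1}}\|g_j\|_{L_{p_2,\omega_2}}$). Using $\|f_i\|_{L_{p_1,\omega_1}}\lesssim\|\mathscr{F}^{-1}\chi_i\|_{L_{(p_1\wedge1),v_1}}\|f\|_{L_{p_1,\omega_1}}\lesssim\|f\|_{L_{p_1,\omega_1}}$ via the weighted convolution bound (Young for $p_1\ge1$, Lemma \ref{lemma, convolution for p<1} for $p_1<1$), and likewise for $g_j$, and summing the finitely many terms with the quasi-triangle inequality in $L_{p,\omega}$ (which costs only a factor $N^{(1/p-1)_+}$), we get $\|fg\|_{L_{p,\omega}}\lesssim_{\Omega,\Gamma}\|f\|_{L_{p_1,\omega_1}}\|g\|_{L_{p_2,\omega_2}}$. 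The main obstacle is the bookkeeping with the weights: one must verify that the translates $\mathscr{F}^{-1}\chi_i$ and the modulation operators interact with $\omega,\omega_1,\omega_2$ only through the $v$-moderateness constants and the fixed number $N$ of covering cubes, so that all implicit constants depend only on $\Omega$, $\Gamma$, $p$, and the weights — not on $f$ or $g$. This is routine given Lemma \ref{lemma, convolution for p<1}, but it is the step where care is needed; everything else is soft.
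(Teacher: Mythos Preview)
Your proposal is correct and follows essentially the same route as the paper's proof: both arguments pick a ball (or cube) inside $\Gamma$, cover $\Omega$ by finitely many translates of it, decompose $f$ and $g$ via a smooth partition of unity, use modulation invariance of the weighted $L_p$-norms to shift each piece's Fourier support into $\Gamma$, and control the pieces by Young's inequality or Lemma~\ref{lemma, convolution for p<1}. The only cosmetic difference is that the paper modulates directly from $B(\kappa_i,r)$ to $B(x_0,r)\subset\Gamma$ in one step, whereas you first pass through the origin; your aside about rescaling is unnecessary (and would interact badly with the weights), but you correctly note this and fall back on the translation-only argument, which is exactly what the paper does.
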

\begin{proof}
By the symmetry of $\Gamma$ and $\Omega$,
we only need to give the proof for sufficiency.
By the assumption, there exists an $x_0\in \mathbb{R}^n$, such that $B(x_0, r)\subset \Gamma$ for some $r>0$.
For the compactness of $\Omega$, we can find a smooth function $h(\xi)$ with compact support near the origin,
and find a point sequence  $\kappa_j\in \mathbb{R}^n$, $j\in A$,
where $A$ is a finite set,
such that
\begin{equation}
\begin{cases}
\textbf{supp}h_j\subset B(\kappa_j,r),
\\
\sum_{j\in A}h_j(\xi)\equiv 1~in~\Omega,
\end{cases}
\end{equation}
where $h_j(\xi)=h(\xi-\kappa_j)$.
Write $P_j=\mathscr{F}^{-1}h_j \mathscr{F}$.
\ For two functions $f\in L^{\Omega}_{p_1, \omega_1}\cap \mathscr{S}$, $g\in L^{\Omega}_{p_2, \omega_2}\cap \mathscr{S}$,
we have
\begin{equation}
M_{x_0-\kappa_i}P_i(f)\in L^{\Gamma}_{p_1, \omega_1}\cap \mathscr{S} ,\  M_{x_0-\kappa_j}P_j(g)\in L^{\Gamma}_{p_2, \omega_2}\cap \mathscr{S}.
\end{equation}
By the assumption $L^{\Gamma}_{p_1, \omega_1}\cdot L^{\Gamma}_{p_2, \omega_2} \subset L_{p, \omega}$, we obtain that
\begin{equation}
\begin{split}
\|P_i(f)\cdot P_j(g)\|_{L_{p, \omega}}
= &
\|M_{x_0-\kappa_i}P_i(f)\cdot M_{x_0-\kappa_j}P_j(g)\|_{L_{p, \omega}}.
\\
\lesssim &
\|M_{x_0-\kappa_i}P_i(f)\|_{L_{p_1, \omega_1}}\|M_{x_0-\kappa_j}P_j(g)\|_{L_{p_2, \omega_2}}
\\
= &
\|P_i(f)\|_{L_{p_1, \omega_1}}\|P_j(g)\|_{L_{p_2, \omega_2}}.
\end{split}
\end{equation}

Then, we use Young's inequality or Lemma \ref{lemma, convolution for p<1} to deduce
\begin{equation}
\begin{split}
\|P_i(f)\|_{L_{p_1, \omega_1}}
\lesssim &
\|\mathscr{F}^{-1}h_j\|_{L_{p_1\wedge 1,v}}\|f\|_{L_{p_1,\omega_1}}
\\
= &
\|\mathscr{F}^{-1}h\|_{L_{p_1\wedge 1,v}}\|f\|_{L_{p_1,\omega_1}}
\\
\lesssim &
\|f\|_{L_{p_1,\omega_1}}.
\end{split}
\end{equation}
Similarly, we deduce $\|P_j(g)\|_{L_{p_2, \omega_2}}\lesssim \|g\|_{L_{p_2, \omega_2}}$. Thus, we obtain that

\begin{equation}
\begin{split}
\|P_i(f)\cdot P_j(g)\|_{L_{p, \omega}}
\lesssim &
\|P_i(f)\|_{L_{p_1, \omega_1}}\|P_j(g)\|_{L_{p_2, \omega_2}}
\\
\lesssim &
\|f\|_{L_{p_1, \omega_1}}\|g\|_{L_{p_2, \omega_2}}.
\end{split}
\end{equation}

Recalling $|A|\leq \infty$, we obtain that
\begin{equation}
\begin{split}
\|fg\|_{L_{p, \omega}}
= &
\|\sum_{i,j\in A}P_i(f)\cdot P_j(g)\|_{L_{p, \omega}}
\\
\lesssim &
\sum_{i,j\in A}\|P_i(f)\cdot P_j(g)\|_{L_{p, \omega}}
\\
\lesssim &
|A|^2\|f\|_{L_{p_1, \omega_1}}\|g\|_{L_{p_2, \omega_2}}\lesssim \|f\|_{L_{p_1, \omega_1}}\|g\|_{L_{p_2, \omega_2}}.
\end{split}
\end{equation}
for any two functions $f\in L^{\Omega}_{p_1, \omega_1}\cap \mathscr{S}$, $g\in L^{\Omega}_{p_2, \omega_2}\cap \mathscr{S}$.
\end{proof}

Similarly, we can verify the following lemma, whose proof is similar to the above one. We omit the proof here.

\begin{lemma}\label{lemma, independence of Gamma, convolution}
Suppose $0<p, p_j\leq \infty$, $\omega,\omega_j \in \mathscr{P}({R^n})$ for $j=1,2$.
Let $\Omega$, $\Gamma$ be compact subsets of $\mathbb{R}^n$ with non-empty interior, $j=1,2$.
Then
\begin{equation}
L^{\Omega}_{p_1, \omega_1}\ast L^{\Omega}_{p_2, \omega_2} \subset L_{p, \omega}
\end{equation}
if and only if
\begin{equation}
L^{\Gamma}_{p_1, \omega_1}\ast L^{\Gamma}_{p_2, \omega_2} \subset L_{p, \omega}.
\end{equation}
\end{lemma}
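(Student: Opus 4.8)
The plan is to follow the proof of Lemma~\ref{lemma, independence of Gamma, product} almost line by line, with products replaced by convolutions, the one genuine change being dictated by the fact that convolution does not combine with modulation as conveniently as multiplication does. By the symmetry of $\Gamma$ and $\Omega$, it suffices to prove sufficiency, i.e. that $L^{\Gamma}_{p_1,\omega_1}\ast L^{\Gamma}_{p_2,\omega_2}\subset L_{p,\omega}$ implies $L^{\Omega}_{p_1,\omega_1}\ast L^{\Omega}_{p_2,\omega_2}\subset L_{p,\omega}$, and by a standard limiting argument it is enough to prove $\|f\ast g\|_{L_{p,\omega}}\lesssim\|f\|_{L_{p_1,\omega_1}}\|g\|_{L_{p_2,\omega_2}}$ for $f\in L^{\Omega}_{p_1,\omega_1}\cap\mathscr{S}$ and $g\in L^{\Omega}_{p_2,\omega_2}\cap\mathscr{S}$.

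Fix $x_0\in\mathbb{R}^n$ and $\rho>0$ with $B(x_0,\rho)\subset\Gamma$, and choose $r\in(0,\rho/4)$. As in the proof of Lemma~\ref{lemma, independence of Gamma, product}, take a smooth $h$ supported near the origin and a finite point set $\{\kappa_j\}_{j\in A}$ so that $\mathrm{supp}\,h_j\subset B(\kappa_j,r)$ and $\sum_{j\in A}h_j\equiv1$ on $\Omega$, where $h_j(\xi)=h(\xi-\kappa_j)$, and set $P_j=\mathscr{F}^{-1}h_j\mathscr{F}$. Then $f=\sum_{i\in A}P_if$ and $g=\sum_{j\in A}P_jg$ (finite sums), so $f\ast g=\sum_{i,j\in A}P_if\ast P_jg$. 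The decisive observation is that $P_if\ast P_jg$ has Fourier support in $B(\kappa_i,r)\cap B(\kappa_j,r)$, hence vanishes identically whenever $|\kappa_i-\kappa_j|\ge2r$; so only pairs with $|\kappa_i-\kappa_j|<2r$ contribute, and for such a pair both $B(\kappa_i,r)$ and $B(\kappa_j,r)$ are contained in $B(\kappa_i,3r)$.

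For such a contributing pair, put $a=x_0-\kappa_i$. Since $\widehat{M_ah}(\xi)=\hat h(\xi-a)$, the Schwartz functions $M_aP_if$ and $M_aP_jg$ have Fourier supports in $B(x_0,r)$ and $B(x_0,3r)$, both inside $B(x_0,\rho)\subset\Gamma$. A single common modulation commutes with convolution, $(M_aP_if)\ast(M_aP_jg)=M_a(P_if\ast P_jg)$, and $|M_ah|=|h|$ makes all the weighted norms $M_a$-invariant; applying the hypothesis to the pair $M_aP_if,\,M_aP_jg$ therefore gives $\|P_if\ast P_jg\|_{L_{p,\omega}}\lesssim\|P_if\|_{L_{p_1,\omega_1}}\|P_jg\|_{L_{p_2,\omega_2}}$. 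Exactly as in the product case, Young's inequality (for $p_j\ge1$) or Lemma~\ref{lemma, convolution for p<1} (for $p_j<1$) yields $\|P_if\|_{L_{p_1,\omega_1}}\lesssim\|\mathscr{F}^{-1}h\|_{L_{p_1\wedge1,v}}\|f\|_{L_{p_1,\omega_1}}\lesssim\|f\|_{L_{p_1,\omega_1}}$, and likewise for $g$. Summing the at most $|A|^2$ contributing terms --- via the triangle inequality in $L_{p,\omega}$ when $p\ge1$, its $p$-power version when $p<1$, and the finiteness of $A$ --- produces the claimed estimate.

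The one substantive obstacle, and the only place where the argument departs from the proof of Lemma~\ref{lemma, independence of Gamma, product}, is that for multiplication one is free to shift the frequency supports of the two factors independently, since $M_aF\cdot M_bG=M_{a+b}(FG)$, whereas for convolution one must use the \emph{same} shift, because $M_aF\ast M_bG=M_a\big(F\ast M_{b-a}G\big)$. Hence a single translate can bring the Fourier supports of $P_if$ and $P_jg$ simultaneously into the (possibly very small) ball $B(x_0,\rho)\subset\Gamma$ only if those supports were already close together; the resolution is that this closeness is automatic for every pair that survives, because $P_if\ast P_jg=0$ as soon as $B(\kappa_i,r)$ and $B(\kappa_j,r)$ are disjoint, provided only that the decomposition was chosen fine enough that $3r<\rho$.
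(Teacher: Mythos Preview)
Your proof is correct and follows exactly the approach the paper intends: the paper omits the proof entirely, saying only that it ``is similar to the above one'' (i.e., to Lemma~\ref{lemma, independence of Gamma, product}), and you have carried out that adaptation in full. The one nontrivial point you flag---that for convolution a \emph{common} modulation must be used, which is available precisely because $\widehat{P_if\ast P_jg}$ is supported in the intersection $B(\kappa_i,r)\cap B(\kappa_j,r)$ and hence nonzero only for nearby pairs---is indeed the heart of the matter and is handled cleanly.
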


We now give the following propositions for the first reduction of our main theorems.

\begin{proposition}[\textbf{First reduction, product}]\label{proposition, first reduction, product}
Suppose $1\leq p\leq \infty$, $0<p_j, q, q_j\leq \infty$, $\omega,\omega_j, m,m_j\in \mathscr{P}({R^n})$ for $j=1,2$.
Let $\Gamma$ be a compact subset of $\mathbb{R}^n$ whose interior  $int(\Gamma)$ is not empty.
Then
\begin{equation}
M_{p_1,q_1}^{m_1, \omega_1}\cdot M_{p_2,q_2}^{m_2, \omega_2} \subset M_{p, q}^{m, \omega}
\end{equation}
if and only if
\begin{equation}
L^{\Gamma}_{p_1, \omega_1}\cdot L^{\Gamma}_{p_2, \omega_2} \subset L_{p, \omega}
\end{equation}
and
\begin{equation}
l_{q_1, m_1}\ast l_{q_2, m_2} \subset l_{q, m}.
\end{equation}
\end{proposition}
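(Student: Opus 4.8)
The plan is to prove the two implications separately, using the discretization of band-limited functions provided by Lemma \ref{lemma, equivalent norm, compact support} to pass between the modulation space estimate and the pair of estimates on weighted Lebesgue and weighted sequence spaces.

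\textbf{Sufficiency.} Assume $L^{\Gamma}_{p_1,\omega_1}\cdot L^{\Gamma}_{p_2,\omega_2}\subset L_{p,\omega}$ and $l_{q_1,m_1}\ast l_{q_2,m_2}\subset l_{q,m}$. Take $f\in M_{p_1,q_1}^{m_1,\omega_1}\cap\mathscr{S}$, $g\in M_{p_2,q_2}^{m_2,\omega_2}\cap\mathscr{S}$. First I would write $f=\sum_{k_1}\Box_{k_1}f$, $g=\sum_{k_2}\Box_{k_2}g$, so that $fg=\sum_{k_1,k_2}(\Box_{k_1}f)(\Box_{k_2}g)$. Since $\widehat{\Box_{k_1}f}$ is supported near $k_1$ and $\widehat{\Box_{k_2}g}$ near $k_2$, the product $(\Box_{k_1}f)(\Box_{k_2}g)$ has Fourier support in a ball of fixed radius centered at $k_1+k_2$; hence $\Box_k(fg)=\sum_{k_1+k_2\in k+[-c,c]^n}\Box_k[(\Box_{k_1}f)(\Box_{k_2}g)]$ involves only boundedly many relevant pairs for each output index offset. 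Using the boundedness of $\Box_k$ on $L_{p,\omega}$ (which follows as in the proof of Proposition \ref{proposition, equivalent norm}, via Young's inequality or Lemma \ref{lemma, convolution for p<1} together with moderateness, noting $p\geq1$ here) and then Lemma \ref{lemma, independence of Gamma, product} to replace $\Gamma$ by the fixed ball where these band-limited products live, I get
\begin{equation*}
\|\Box_k(fg)\|_{L_{p,\omega}}\lesssim \sum_{|k_1+k_2-k|_\infty\leq c}\|\Box_{k_1}f\|_{L_{p_1,\omega_1}}\|\Box_{k_2}g\|_{L_{p_2,\omega_2}}.
\end{equation*}
Setting $a(k_1)=\|\Box_{k_1}f\|_{L_{p_1,\omega_1}}$, $b(k_2)=\|\Box_{k_2}g\|_{L_{p_2,\omega_2}}$, the right side is (up to the finite sum over the offset, absorbed into the constant) $(a\ast b)(k)$; applying the $l_q$-norm with weight $m$ and the hypothesis $l_{q_1,m_1}\ast l_{q_2,m_2}\subset l_{q,m}$ gives $\|fg\|_{M_{p,q}^{m,\omega}}\lesssim\|a\|_{l_{q_1,m_1}}\|b\|_{l_{q_2,m_2}}=\|f\|_{M_{p_1,q_1}^{m_1,\omega_1}}\|g\|_{M_{p_2,q_2}^{m_2,\omega_2}}$, and a density argument finishes this direction.

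\textbf{Necessity.} Assume $M_{p_1,q_1}^{m_1,\omega_1}\cdot M_{p_2,q_2}^{m_2,\omega_2}\subset M_{p,q}^{m,\omega}$. To recover the $L^{\Gamma}$ product estimate, I would fix $f,g$ with Fourier support in $\Gamma$; by Lemma \ref{lemma, independence of Gamma, product} I may assume $\Gamma$ is small enough to sit inside one $Q_k$, and then $fg$ is band-limited, so by Lemma \ref{lemma, equivalent norm, compact support} $\|fg\|_{L_{p,\omega}}\sim\|fg\|_{M_{p,q}^{m,\omega}}\lesssim\|f\|_{M_{p_1,q_1}^{m_1,\omega_1}}\|g\|_{M_{p_2,q_2}^{m_2,\omega_2}}\sim\|f\|_{L_{p_1,\omega_1}}\|g\|_{L_{p_2,\omega_2}}$, which is the first desired inclusion. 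To recover the convolution estimate on sequences, I would, given finitely supported sequences $a=(a_{k_1})$, $b=(b_{k_2})$, build test functions $f=\sum_{k_1}a_{k_1}M_{k_1}\psi$ and $g=\sum_{k_2}b_{k_2}M_{k_2}\psi$ for a fixed Schwartz $\psi$ whose Fourier transform is supported in a small ball, chosen so that the modulated pieces are almost disjoint in frequency and $\|f\|_{M_{p_1,q_1}^{m_1,\omega_1}}\sim\|a\|_{l_{q_1,m_1}}$, and similarly for $g$. Then $fg=\sum_{k_1,k_2}a_{k_1}b_{k_2}M_{k_1+k_2}\psi^2$, so $\Box_k(fg)$ picks out (up to the bounded offset) $\sum_{k_1+k_2=k}a_{k_1}b_{k_2}$ times a fixed profile, whence $\|fg\|_{M_{p,q}^{m,\omega}}\gtrsim\|a\ast b\|_{l_{q,m}}$; combined with the boundedness hypothesis this yields $l_{q_1,m_1}\ast l_{q_2,m_2}\subset l_{q,m}$.

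The main obstacle I anticipate is the bookkeeping in the necessity direction: one must choose the window $\psi$ and verify that $\|\sum a_{k}M_{k}\psi\|_{M_{p_1,q_1}^{m_1,\omega_1}}$ is genuinely comparable to $\|a\|_{l_{q_1,m_1}}$ (both directions), which requires the Fourier supports of the $M_k\psi$ to be separated enough that the $\Box_j$ blocks decouple, while the spatial weight $\omega_1$ interacts with the common profile $\psi$ in a controlled, moderate way — here the polynomial-moderateness hypothesis $\omega_1\in\mathscr{P}(\mathbb{R}^n)$ and $m_1(\xi)\sim m_1(k)$ on $Q_k$ are exactly what is needed. A secondary technical point is ensuring, in the sufficiency half, that the frequency supports of $(\Box_{k_1}f)(\Box_{k_2}g)$ really are contained in a ball of $k$-independent radius so that Lemma \ref{lemma, independence of Gamma, product} applies uniformly; this is immediate from $\mathrm{supp}\,\widehat{\Box_k h}\subset k+[-1,1]^n$ and convolution of supports, but it is the step that makes the whole reduction go through.
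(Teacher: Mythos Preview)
Your proposal is correct and follows essentially the same route as the paper: Lemma \ref{lemma, equivalent norm, compact support} for the $L^{\Gamma}$ necessity, the test functions $f=\sum a_k M_k\psi$ for the sequence-convolution necessity, and the almost-orthogonality decomposition $\Box_k(fg)=\sum_{|l|\leq c}\sum_{i+j=k+l}\Box_k(\Box_if\cdot\Box_jg)$ for sufficiency. The one point to make explicit in the sufficiency step is that Lemma \ref{lemma, independence of Gamma, product} by itself compares two \emph{fixed} compact sets, so to get a bound uniform in $k_1,k_2$ you should insert the modulation $\|\Box_{k_1}f\cdot\Box_{k_2}g\|_{L_{p,\omega}}=\|M_{-k_1}\Box_{k_1}f\cdot M_{-k_2}\Box_{k_2}g\|_{L_{p,\omega}}$ (as the paper does) before applying the product hypothesis on a single compact set near the origin.
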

\begin{proof}
We first show the necessity part.
By Lemma \ref{lemma, equivalent norm, compact support}, we conclude that
\begin{equation}
\begin{split}
\|fg\|_{L_{p,\omega}}\sim & \|fg\|_{M_{p, q}^{m, \omega}}
\\
\lesssim &
\|f\|_{M_{p_1, q_1}^{m_1, \omega_1}}\|g\|_{M_{p_2, q_2}^{m_2, \omega_2}}
\\
\sim &
\|f\|_{L_{p_1, \omega_1}}\|g\|_{L_{p_2, \omega_2}}
\end{split}
\end{equation}
for any two functions $f\in L^{\Gamma}_{p_1, \omega_1}\cap \mathscr{S}$, $g\in L^{\Gamma}_{p_2, \omega_2}\cap \mathscr{S}$,
which implies $L^{\Gamma}_{p_1, \omega_1}\cdot L^{\Gamma}_{p_2, \omega_2} \subset L_{p, \omega}$.

Then, we choose a nonzero smooth function $h$ with sufficiently small Fourier support near the origin.
Let $\vec{a}=\{a_k\}_{k\in \mathbb{Z}^n}$ and $\vec{b}=\{b_k\}_{k\in \mathbb{Z}^n}$ be two nonnegative functions (sequences) defined on $\mathbb{Z}^n$.
By assuming that the following two series converge, we define two functions
\begin{equation}
f(x)=\sum_{k\in \mathbb{Z}^n}a_kh(x)e^{2\pi i k\cdot x},\hspace{6mm}g(x)=\sum_{k\in \mathbb{Z}^n}b_kh(x)e^{2\pi i k\cdot x}.
\end{equation}
It follows
\begin{equation}
(fg)(x)=\sum_{j,l\in \mathbb{Z}^n}a_jb_lh^2(x) e^{2\pi i (j+l)\cdot x}.
\end{equation}
Observing that
\begin{eqnarray}
\Vert \Box _{k}(fg)\Vert _{L_{p, \omega}} &=&\Vert
\sum_{j+l=k}a_{j}b_{l}h^{2}(x)e^{2\pi i(j+l)\cdot x}\Vert _{L_{p, \omega}}\sim
\sum_{j+l=k}a_{j}b_{l}, \\
\Vert \Box _{k}(f)\Vert _{L_{p_1, \omega_1}} &\sim &a_{k},\text{ \ }\Vert \Box
_{k}(g)\Vert _{L_{p_2, \omega_2}}\sim b_{k}\text{ \ for all \ }k\in \mathbb{Z}^{n},
\end{eqnarray}
we use the definition of modulation space (the discrete form) to deduce
\begin{equation}
\|f\|_{M_{p_1,q_1}^{m_1,\omega_1}}\sim \|\vec{a}\|_{l_{q_1,m_1}},
\|g\|_{M_{p_2,q_2}^{m_2,\omega_2}}\sim \|\vec{b}\|_{l_{q_2,m_2}}
\end{equation}
and
\begin{equation}
\|fg\|_{M_{p,q}^{m,\omega}}\sim \|\vec{a}\ast \vec{b}\|_{l_{q,m}}.
\end{equation}

By the assumption $M_{p_1,q_1}^{m_1, \omega_1}\cdot M_{p_2,q_2}^{m_2, \omega_2} \subset M_{p, q}^{m, \omega}$, we obtain that
\begin{equation}
\|\vec{a}\ast \vec{b}\|_{l_{q,m}}\lesssim \|\vec{a}\|_{l_{q_1,m_1}}\|\vec{b}\|_{l_{q_2,m_2}},
\end{equation}
which implies $l_{q_1, m_1}\ast l_{q_2, m_2} \subset l_{q, m}$.

We next turn to show the sufficiency of this proposition.
Using the almost
orthogonality of the frequency projections $\sigma _{k}$, we have that for all  \ $k\in \mathbb{Z}^{n}$,
\begin{equation}
\Box_k(fg)=\sum_{i,j\in \mathbb{Z}^n}\Box_k(\Box_if\cdot \Box_jg)
=\sum_{|l|\leq c(n)}\sum_{i+j=k+l}\Box_k(\Box_if\cdot \Box_jg),
\end{equation}
where \ $c(n)$ \ is a constant depending only on \ $n$.
By the fact that $\Box_k$ is an uniform $L_{p,\omega}$ multiplier (using Young's inequality or Lemma \ref{lemma, convolution for p<1}), we obtain
\begin{equation}
\begin{split}
\|\Box_k(fg)\|_{L_{p,\omega}}=&\|\sum_{|l|\leq c(n)}\sum_{i+j=k+l}\Box_k(\Box_if\cdot \Box_jg)\|_{L_{p,\omega}}
\\
\lesssim &
\sum_{|l|\leq c(n)}\sum_{i+j=k+l}\|\Box_if \cdot \Box_jg\|_{L_{p,\omega}}.
\end{split}
\end{equation}
By the assumption $L^{\Gamma}_{p_1, \omega_1}\cdot L^{\Gamma}_{p_2, \omega_2} \subset L_{p, \omega}$
and Lemma \ref{lemma, independence of Gamma, product}, we further obtain that
\begin{equation}
\begin{split}
\|\Box_if \cdot \Box_jg\|_{L_{p,\omega}}
= &
\|M_{-i}\Box_if \cdot M_{-j}\Box_jg\|_{L_{p,\omega}}
\\
\lesssim &
\|M_{-i}\Box_if\|_{L_{p_1,\omega_1}} \cdot \|M_{-j}\Box_jg\|_{L_{p_2,\omega_2}}
\\
= &
\|\Box_if\|_{L_{p_1,\omega_1}} \cdot \|\Box_jg\|_{L_{p_2,\omega_2}}.
\end{split}
\end{equation}
Hence
\begin{equation}
\begin{split}
\|\Box_k(fg)\|_{L_{p,\omega}}\lesssim &\sum_{|l|\leq c(n)}\sum_{i+j=k+l}\|\Box_if \cdot \Box_jg\|_{L_{p,\omega}}
\\
\lesssim &
\sum_{|l|\leq c(n)}\sum_{i+j=k+l}\|\Box_if\|_{L_{p_1,\omega_1}} \cdot \|\Box_jg\|_{L_{p_2,\omega_2}}
\\
\lesssim &
\sum_{|l|\leq c(n)}\left(\{\|\Box_if\|_{L_{p_1,\omega_1}}\}\ast \{\|\Box_jg\|_{L_{p_2,\omega}}\}\right)(k+l).
\end{split}
\end{equation}

By the assumption $l_{q_1, m_1}\ast l_{q_2, m_2} \subset l_{q, m}$ and the fact $m(k)\sim m(k+l)$ for $|l|\leq c(n)$, we now conclude that
\begin{equation}
\begin{split}
\|fg\|_{M_{p,q}^{m,\omega}}
= &
\big\|\{\|\Box_k(fg)\|_{L_{p,\omega}}\}\big\|_{l_{k;q,m}}
\\
\lesssim &
\big\|\big\{\sum_{|l|\leq c(n)}\left(\{\|\Box_if\|_{L_{p_1,\omega_1}}\}\ast \{\|\Box_jg\|_{L_{p_2,\omega}}\}\right)(k+l)\big\}_{k\in \mathbb{Z}^n}\big\|_{l_{k;q,m}}
\\
\lesssim &
\big\|\{\left(\{\|\Box_if\|_{L_{p_1,\omega_1}}\}\ast \{\|\Box_jg\|_{L_{p_2,\omega}}\}\right)(k)\big\}_{k\in \mathbb{Z}^n}\big\|_{l_{k;q,m}}
\\
\lesssim &
\big\|\{\|\Box_kf\|_{L_{p_1,\omega_1}}\}\big\|_{l_{k;q_1,m_1}}
\cdot
\big\|\{\|\Box_kg\|_{L_{p_2,\omega_2}}\}\big\|_{l_{k;q_2,m_2}}
\\
=&
\|f\|_{M_{p_1,q_1}^{m_1,\omega_1}}\|g\|_{M_{p_2,q_2}^{m_2,\omega_2}}.
\end{split}
\end{equation}

\end{proof}

\begin{proposition}[\textbf{First reduction, convolution}]\label{proposition, first reduction, convolution}
Suppose $0<p, p_j, q, q_j\leq \infty$, $\omega,\omega_j, m,m_j\in \mathscr{P}({R^n})$ for $j=1,2$.
Let $\Gamma$ be a compact subset of $\mathbb{R}^n$ whose interior  $int(\Gamma)$ is not empty.
Then
\begin{equation}
M_{p_1,q_1}^{m_1, \omega_1}\ast M_{p_2,q_2}^{m_2, \omega_2} \subset M_{p, q}^{m, \omega}
\end{equation}
if and only if
\begin{equation}
L^{\Gamma}_{p_1, \omega_1}\ast L^{\Gamma}_{p_2, \omega_2} \subset L_{p, \omega}
\end{equation}
and
\begin{equation}
l_{q_1, m_1}\cdot l_{q_2, m_2} \subset l_{q, m}.
\end{equation}
\end{proposition}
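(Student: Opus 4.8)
The plan is to follow the template already established in the proof of Proposition~\ref{proposition, first reduction, product}, adapting each step from the product structure to the convolution structure while exchanging the roles of the ``time'' variable (governed by $p$) and the ``frequency'' variable (governed by $q$) in the appropriate way. For the necessity part, I would first use Lemma~\ref{lemma, equivalent norm, compact support}: if $f, g \in \mathscr{S}$ have Fourier supports in $\Gamma$, then $f \ast g$ has Fourier support in $\Gamma + \Gamma$, a fixed compact set, so
\begin{equation*}
\|f\ast g\|_{L_{p,\omega}} \sim \|f\ast g\|_{M_{p,q}^{m,\omega}} \lesssim \|f\|_{M_{p_1,q_1}^{m_1,\omega_1}}\|g\|_{M_{p_2,q_2}^{m_2,\omega_2}} \sim \|f\|_{L_{p_1,\omega_1}}\|g\|_{L_{p_2,\omega_2}},
\end{equation*}
which gives $L^{\Gamma}_{p_1,\omega_1}\ast L^{\Gamma}_{p_2,\omega_2}\subset L_{p,\omega}$ on the dense class and hence in general. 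For the sequence part, I would again build test functions $f(x)=\sum_k a_k h(x)e^{2\pi i k\cdot x}$ and $g(x)=\sum_k b_k h(x)e^{2\pi i k\cdot x}$ with $h$ having tiny Fourier support near the origin; the difference from the product case is that now I want to read off a pointwise-multiplication relation on the sequence side, so I should arrange the convolution $f\ast g$ so that $\Box_k(f\ast g)$ picks out $a_k b_k$ (up to constants). Concretely, since $\widehat{f\ast g} = \widehat f \cdot \widehat g$ and each $\widehat f$ is a sum of bumps concentrated near the lattice points, the product of bumps near $j$ and near $l$ is nonzero only when $j=l$, so $f\ast g$ is (essentially) $\sum_k a_k b_k (h\ast h)(x)e^{2\pi i k\cdot x}$, giving $\|\Box_k(f\ast g)\|_{L_{p,\omega}}\sim a_k b_k$ and $\|f\ast g\|_{M_{p,q}^{m,\omega}}\sim \|\vec a\cdot\vec b\|_{l_{q,m}}$; together with $\|f\|_{M_{p_1,q_1}^{m_1,\omega_1}}\sim\|\vec a\|_{l_{q_1,m_1}}$ and $\|g\|_{M_{p_2,q_2}^{m_2,\omega_2}}\sim\|\vec b\|_{l_{q_2,m_2}}$ this forces $l_{q_1,m_1}\cdot l_{q_2,m_2}\subset l_{q,m}$.

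For the sufficiency direction I would decompose $\Box_k(f\ast g)$ using almost orthogonality of the $\sigma_k$. Here the key combinatorial point is different from the product case: the Fourier support of $\Box_i f \ast \Box_j g$ is contained in $(i+[-1,1]^n)\cap(j+[-1,1]^n)$, which is empty unless $|i-j|\le c(n)$, and in that case the support lies near the single lattice point $i$ (equivalently near $j$). So $\Box_k(f\ast g)=\sum_{|l|\le c(n)}\Box_k\big(\Box_k f \ast \Box_{k+l}g\big)$ up to harmless index shifts, i.e. the convolution structure collapses the double sum over $i+j$ into a bounded sum. Using that $\Box_k$ is a uniform $L_{p,\omega}$ Fourier multiplier (Young's inequality for $p\ge 1$, or Lemma~\ref{lemma, convolution for p<1} for $p<1$) and the assumption $L^{\Gamma}_{p_1,\omega_1}\ast L^{\Gamma}_{p_2,\omega_2}\subset L_{p,\omega}$ together with the convolution analogue Lemma~\ref{lemma, independence of Gamma, convolution} (after translating Fourier supports back to a neighbourhood of the origin via modulation by $M_{-i}$), I get
\begin{equation*}
\|\Box_k(f\ast g)\|_{L_{p,\omega}} \lesssim \sum_{|l|\le c(n)} \|\Box_k f\|_{L_{p_1,\omega_1}}\,\|\Box_{k+l}g\|_{L_{p_2,\omega_2}}.
\end{equation*}
Taking the $l_{q,m}$ norm in $k$, using $m(k)\sim m(k+l)$ for $|l|\le c(n)$, absorbing the finite sum over $l$, and finally invoking $l_{q_1,m_1}\cdot l_{q_2,m_2}\subset l_{q,m}$ on the sequences $\{\|\Box_k f\|_{L_{p_1,\omega_1}}\}$ and $\{\|\Box_k g\|_{L_{p_2,\omega_2}}\}$ yields $\|f\ast g\|_{M_{p,q}^{m,\omega}}\lesssim \|f\|_{M_{p_1,q_1}^{m_1,\omega_1}}\|g\|_{M_{p_2,q_2}^{m_2,\omega_2}}$, as desired.

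The step I expect to require the most care is the bookkeeping of Fourier supports in the sufficiency argument: one must verify precisely why $\Box_i f \ast \Box_j g$ contributes to $\Box_k(f\ast g)$ only for $i,j$ within bounded distance of $k$, so that the potentially problematic double summation genuinely reduces to a single convolution-type sum over $k$ plus a bounded sum over a shift parameter $l$. A second mild subtlety is that, unlike Proposition~\ref{proposition, first reduction, product}, there is no hypothesis $p\ge 1$ here, so all the ``$\Box_k$ is a uniform multiplier'' and ``$\|P_j f\|\lesssim\|f\|$'' estimates must be justified uniformly down to $0<p<1$ via Lemma~\ref{lemma, convolution for p<1} rather than Young's inequality; this is exactly why Lemma~\ref{lemma, independence of Gamma, convolution} was stated for the full range $0<p\le\infty$, and I would cite it accordingly. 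Everything else is a routine transcription of the product proof with ``$\ast$'' and ``$\cdot$'' interchanged on the sequence side.
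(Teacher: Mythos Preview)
Your proposal is correct and follows essentially the same approach as the paper's proof. The only cosmetic differences are that the paper writes the test functions on the Fourier side as $\widehat{f}(\xi)=\sum_k a_k\widehat{h}(\xi-k)$ (which is exactly your $f(x)=\sum_k a_k h(x)e^{2\pi i k\cdot x}$), and in the sufficiency decomposition the paper records the double sum $\Box_k(f\ast g)=\sum_{|l|\le c(n)}\sum_{|\tilde l|\le c(n)}\Box_k(\Box_{k+l}f\ast\Box_{k+\tilde l}g)$ explicitly rather than folding one shift into your phrase ``up to harmless index shifts''; the modulation used to relocate Fourier supports is $M_{-k}$ rather than $M_{-i}$, but since $|i-k|\le c(n)$ this is immaterial.
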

\begin{proof}
We first show the necessity part.
By Lemma \ref{lemma, equivalent norm, compact support}, we conclude that
\begin{equation}
\begin{split}
\|f\ast g\|_{L_{p,\omega}}\sim & \|f\ast g\|_{M_{p, q}^{m, \omega}}
\\
\lesssim &
\|f\|_{M_{p_1, q_1}^{m_1, \omega_1}}\|g\|_{M_{p_2, q_2}^{m_2, \omega_2}}
\\
\sim &
\|f\|_{L_{p_1, \omega_1}}\|g\|_{L_{p_2, \omega_2}}
\end{split}
\end{equation}
for any two functions $f\in L^{\Gamma}_{p_1, \omega_1}\cap \mathscr{S}$, $g\in L^{\Gamma}_{p_2, \omega_2}\cap \mathscr{S}$.

Then, we choose a smooth function $h$ with sufficiently small Fourier support near the origin.
Let $\vec{a}=\{a_k\}_{k\in \mathbb{Z}^n}, \vec{b}=\{b_k\}_{k\in \mathbb{Z}^n}$ be two nonnegative functions (sequences) defined on $\mathbb{Z}^n$.
By assuming that the following two series converge, we define two functions
\begin{equation}
\widehat{f}(\xi)=\sum_{k\in \mathbb{Z}^n}a_k\widehat{h}(\xi-k),\hspace{6mm}\widehat{g}(\xi)=\sum_{k\in \mathbb{Z}^n}b_k\widehat{h}(\xi-k).
\end{equation}
Then
\begin{equation}
(\widehat{f}\cdot \widehat{g})(\xi)=\sum_{k\in \mathbb{Z}^n}a_kb_k(\widehat{h}\cdot \widehat{h})(\xi-k) .
\end{equation}
Observing that
\begin{eqnarray}
\Vert \Box _{k}(f\ast g)\Vert _{L_{p, \omega}} &=&\Vert
a_kb_k\mathscr{F}^{-1}[(\widehat{h}\cdot \widehat{h})(\cdot-k)]\Vert _{L_{p, \omega}}\sim
a_kb_k, \\
\Vert \Box _{k}(f)\Vert _{L_{p_1, \omega_1}} &\sim &a_{k},\text{ \ }\Vert \Box
_{k}(g)\Vert _{L_{p_2, \omega_2}}\sim b_{k}\text{ \ for all \ }k\in \mathbb{Z}^{n},
\end{eqnarray}
we use the definition of modulation space (the discrete form) to deduce
\begin{equation}
\|f\|_{M_{p_1,q_1}^{m_1,\omega_1}}\sim \|\vec{a}\|_{l_{q_1,m_1}},
\|g\|_{M_{p_2,q_2}^{m_2,\omega_2}}\sim \|\vec{b}\|_{l_{q_2,m_2}}
\end{equation}
and
\begin{equation}
\|f\ast g\|_{M_{p,q}^{m,\omega}}\sim \|\vec{a}\cdot \vec{b}\|_{l_{q,m}}.
\end{equation}

By the assumption $M_{p_1,q_1}^{m_1, \omega_1}\cdot M_{p_2,q_2}^{m_2, \omega_2} \subset M_{p, q}^{m, \omega}$, we obtain that
\begin{equation}
\|\vec{a}\cdot \vec{b}\|_{l(q,m)}\lesssim \|\vec{a}\|_{l(q_1,m_1)}\|\vec{b}\|_{l(q_2,m_2)},
\end{equation}
which implies $l_{q_1, m_1}\cdot l_{q_2, m_2} \subset l_{q, m}$.

We turn to show the sufficiency of this proposition.
Using the almost
orthogonality of the frequency projections $\{\sigma _{k}\}$, we have that for all  \ $k\in \mathbb{Z}^{n}$,
\begin{equation}
\Box_k(f\ast g)=\sum_{i,j\in \mathbb{Z}^n}\Box_k(\Box_if\ast \Box_jg)
=\sum_{|l|\leq c(n)}\sum_{|\tilde{l}|\leq c(n)}\Box_k(\Box_{k+l}f\ast \Box_{k+\tilde{l}}g),
\end{equation}
where \ $c(n)$ \ is a constant depending only on \ $n$.
By the fact that $\Box_k$ is an uniform $L_{p,\omega}$ multiplier (using Young's inequality or Lemma \ref{lemma, convolution for p<1}), we obtain that
\begin{equation}
\begin{split}
\|\Box_k(f\ast g)\|_{L_{p,\omega}}=&\|\sum_{|l|\leq c(n)}\sum_{|\tilde{l}|\leq c(n)}\Box_k(\Box_{k+l}f\ast \Box_{k+\tilde{l}}g)\|_{L_{p,\omega}}
\\
\lesssim &
\sum_{|l|\leq c(n)}\sum_{|\tilde{l}|\leq c(n)}\|\Box_{k+l}f\ast \Box_{k+\tilde{l}}g\|_{L_{p,\omega}}.
\end{split}
\end{equation}
By the assumption $L^{\Gamma}_{p_1, \omega_1}\ast L^{\Gamma}_{p_2, \omega_2} \subset L_{p, \omega}$
and Lemma \ref{lemma, independence of Gamma, convolution}, we obtain that
\begin{equation}
\begin{split}
\|\Box_{k+l}f\ast \Box_{k+\tilde{l}}g\|_{L_{p,\omega}}
= &
\|M_{-k}(\Box_{k+l}f \ast \Box_{k+\tilde{l}}g)\|_{L_{p,\omega}}
\\
= &
\|(M_{-k}\Box_{k+l}f) \ast (M_{-k}\Box_{k+\tilde{l}}g)\|_{L_{p,\omega}}
\\
\lesssim &
\|M_{-k}\Box_{k+l}f\|_{L_{p_1,\omega_1}} \cdot \|M_{-k}\Box_{k+\tilde{l}}g\|_{L_{p_2,\omega_2}}
\\
= &
\|\Box_{k+l}f\|_{L_{p_1,\omega_1}} \cdot \|\Box_{k+\tilde{l}}g\|_{L_{p_2,\omega_2}}
\end{split}
\end{equation}
for $|l|\leq c(n)$ and $|\tilde{l}|\leq c(n)$.
Hence
\begin{equation}
\begin{split}
\|\Box_k(f\ast g)\|_{L_{p,\omega}}\lesssim &\sum_{|l|\leq c(n)}\sum_{|\tilde{l}|\leq c(n)}\|\Box_{k+l}f \ast \Box_{k+\tilde{l}}g\|_{L_{p,\omega}}
\\
\lesssim &
\sum_{|l|\leq c(n)}\sum_{|\tilde{l}|\leq c(n)}\|\Box_{k+l}f\|_{L_{p_1,\omega_1}} \cdot \|\Box_{k+\tilde{l}}g\|_{L_{p_2,\omega_2}}.
\end{split}
\end{equation}

By the assumption $l_{q_1, m_1}\cdot l_{q_2, m_2} \subset l_{q, m}$ and the fact $m(k)\sim m(k+l)$ for $|l|\leq c(n)$, we conclude that
\begin{equation}
\begin{split}
\|f\ast g\|_{M_{p,q}^{m,\omega}}
= &
\big\|\{\|\Box_k(f\ast g)\|_{L_{p,\omega}}\}\big\|_{l_{k;q,m}}
\\
\lesssim &
\big\|\{\sum_{|l|\leq c(n)}\sum_{|\tilde{l}|\leq c(n)}\|\Box_{k+l}f\|_{L_{p_1,\omega_1}} \cdot \|\Box_{k+\tilde{l}}g\|_{L_{p_2,\omega_2}}\}_{k\in \mathbb{Z}^n}\big\|_{l_{k;q,m}}
\\
\lesssim &
\big\|\{\|\Box_kf\|_{L_{p_1,\omega_1}}\}\big\|_{l_{k;q_1,m_1}}
\cdot
\big\|\{\|\Box_kg\|_{L_{p_2,\omega_2}}\}\big\|_{l_{k;q_2,m_2}}
\\
=&
\|f\|_{M_{p_1,q_1}^{m_1,\omega_1}}\|g\|_{M_{p_2,q_2}^{m_2,\omega_2}}.
\end{split}
\end{equation}
\end{proof}

\begin{proposition}[\textbf{First reduction, embedding}]\label{proposition, first reduction, embedding}
Suppose $0<p_j, q_j\leq \infty$, $\omega_j, m_j\in \mathscr{P}({R^n})$ for $j=1,2$.
Let $\Gamma$ be a compact subset of $\mathbb{R}^n$ whose interior  $int(\Gamma)$ is not empty.
Then
\begin{equation}
M_{p_1,q_1}^{m_1, \omega_1}\subset  M_{p_2,q_2}^{m_2, \omega_2}
\end{equation}
if and only if
\begin{equation}
L^{\Gamma}_{p_1, \omega_1}\subset L^{\Gamma}_{p_2, \omega_2}
\end{equation}
and
\begin{equation}
l_{q_1, m_1}\subset l_{q_2, m_2}.
\end{equation}
\end{proposition}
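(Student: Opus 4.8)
The plan is to run the same two-sided scheme used in Propositions \ref{proposition, first reduction, product} and \ref{proposition, first reduction, convolution}, replacing the bilinear map by a single function and the discrete product/convolution by the discrete embedding.

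For necessity I would first establish $L^{\Gamma}_{p_1,\omega_1}\subset L^{\Gamma}_{p_2,\omega_2}$: if $\textbf{supp}\,\widehat f\subset\Gamma$, then Lemma \ref{lemma, equivalent norm, compact support} gives $\|f\|_{L_{p_j,\omega_j}}\sim_\Gamma\|f\|_{M_{p_j,q_j}^{m_j,\omega_j}}$ for $j=1,2$, so the hypothesis $M_{p_1,q_1}^{m_1,\omega_1}\subset M_{p_2,q_2}^{m_2,\omega_2}$ immediately yields $\|f\|_{L_{p_2,\omega_2}}\lesssim\|f\|_{L_{p_1,\omega_1}}$. To get $l_{q_1,m_1}\subset l_{q_2,m_2}$ I would reuse the test functions from the earlier propositions: fix a nonzero $h\in\mathscr{S}$ whose Fourier support is a sufficiently small neighbourhood of the origin, and for a nonnegative finitely supported sequence $\vec a=\{a_k\}_{k\in\mathbb{Z}^n}$ put $f(x)=\sum_k a_k h(x)e^{2\pi i k\cdot x}$. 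Since $\widehat h$ is concentrated near $0$, while $\sigma_k\equiv1$ near $k$ and $\sigma_j\equiv0$ near $k$ for $j\neq k$, one has $\Box_k f=a_k h(x)e^{2\pi i k\cdot x}$, so $\|\Box_k f\|_{L_{p_j,\omega_j}}=a_k\|h\|_{L_{p_j,\omega_j}}\sim a_k$ with constants independent of $k$; hence $\|f\|_{M_{p_j,q_j}^{m_j,\omega_j}}\sim\|\vec a\|_{l_{q_j,m_j}}$ and the hypothesis forces $\|\vec a\|_{l_{q_2,m_2}}\lesssim\|\vec a\|_{l_{q_1,m_1}}$ for every such $\vec a$. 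A monotone-convergence argument over the finite truncations removes the finiteness restriction, giving $l_{q_1,m_1}\subset l_{q_2,m_2}$ (with the usual supremum modification when $q_2=\infty$).

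For sufficiency, given $f\in M_{p_1,q_1}^{m_1,\omega_1}$ I would compare $\|\Box_k f\|_{L_{p_2,\omega_2}}$ with $\|\Box_k f\|_{L_{p_1,\omega_1}}$ uniformly in $k$, and then pass to $l_{q,m}$-norms. The key observation is that $\textbf{supp}\,\widehat{\Box_k f}\subset k+\Omega_0$ with $\Omega_0:=\textbf{supp}\,\sigma_0$ a fixed cube, so $M_{-k}\Box_k f$ is band-limited to the $k$-independent compact set $\Omega_0$; moreover the assumed embedding $L^{\Gamma}_{p_1,\omega_1}\subset L^{\Gamma}_{p_2,\omega_2}$ is equivalent to $L^{\Omega_0}_{p_1,\omega_1}\subset L^{\Omega_0}_{p_2,\omega_2}$, by an argument parallel to Lemma \ref{lemma, independence of Gamma, product} (partition of unity plus Young's inequality / Lemma \ref{lemma, convolution for p<1} to control the localizations). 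Since multiplication by $e^{\pm 2\pi i k\cdot x}$ preserves every $L_{p,\omega}$-norm, this gives
$$
\|\Box_k f\|_{L_{p_2,\omega_2}}=\|M_{-k}\Box_k f\|_{L_{p_2,\omega_2}}\lesssim\|M_{-k}\Box_k f\|_{L_{p_1,\omega_1}}=\|\Box_k f\|_{L_{p_1,\omega_1}}
$$
uniformly in $k$, and then applying $l_{q_1,m_1}\subset l_{q_2,m_2}$ to the sequence $\{\|\Box_k f\|_{L_{p_1,\omega_1}}\}$ yields
$$
\|f\|_{M_{p_2,q_2}^{m_2,\omega_2}}=\big\|\{\|\Box_k f\|_{L_{p_2,\omega_2}}\}\big\|_{l_{q_2,m_2}}\lesssim\big\|\{\|\Box_k f\|_{L_{p_1,\omega_1}}\}\big\|_{l_{q_2,m_2}}\lesssim\big\|\{\|\Box_k f\|_{L_{p_1,\omega_1}}\}\big\|_{l_{q_1,m_1}}=\|f\|_{M_{p_1,q_1}^{m_1,\omega_1}}.
$$

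The only step that needs genuine care is the uniformity in $k$ of the constant in the band-limited estimate $\|\Box_k f\|_{L_{p_2,\omega_2}}\lesssim\|\Box_k f\|_{L_{p_1,\omega_1}}$; this is precisely why one first modulates $\Box_k f$ down to the fixed frequency window $\Omega_0$ and why one needs the $\Gamma$-independence of the band-limited embedding (the embedding analogue of Lemma \ref{lemma, independence of Gamma, product}). Everything else is routine bookkeeping with the discrete norm $\|\cdot\|_{M_{p,q}^{m,\omega}}$ made available by Proposition \ref{proposition, equivalent norm}.
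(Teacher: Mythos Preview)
Your proposal is correct and matches the paper's intended argument: the paper explicitly omits the proof, noting that it is the degenerate (one-function) form of the schemes in Propositions \ref{proposition, first reduction, product} and \ref{proposition, first reduction, convolution}, which is precisely what you carry out. The only cosmetic remark is that the ``embedding analogue of Lemma \ref{lemma, independence of Gamma, product}'' you invoke is already stated in the paper as Lemma \ref{lemma, independence of Gamma, embedding}, so you can cite it directly rather than reproving it.
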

Since this proposition can be viewed as
a degenerate form of Proposition \ref{proposition, first reduction, product} or Proposition \ref{proposition, first reduction, convolution},
the proof follows closely the proof of Proposition \ref{proposition, first reduction, product} and Proposition \ref{proposition, first reduction, convolution},
we omit its proof. Below, we have a degenerate version of Lemma \ref{lemma, independence of Gamma, product} and Lemma \ref{lemma, independence of Gamma, convolution}.
\begin{lemma}\label{lemma, independence of Gamma, embedding}
Suppose $0<p_j\leq \infty$, $\omega_j \in \mathscr{P}({R^n})$ for $j=1,2$.
Let $\Omega$, $\Gamma$ be compact subsets of $\mathbb{R}^n$ with non-empty interior, $j=1,2$.
Then
\begin{equation}
L^{\Omega}_{p_1, \omega_1}\subset L_{p_2, \omega_2}
\end{equation}
if and only if
\begin{equation}
L^{\Gamma}_{p_1, \omega_1}\subset L_{p_2, \omega_2}.
\end{equation}
\end{lemma}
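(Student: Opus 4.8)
The plan is to prove Lemma~\ref{lemma, independence of Gamma, embedding} by mimicking the structure of the proof of Lemma~\ref{lemma, independence of Gamma, product}, but in the simpler linear (single-function) setting. By symmetry of $\Omega$ and $\Gamma$, it suffices to prove the sufficiency direction: assuming $L^{\Gamma}_{p_1,\omega_1}\subset L_{p_2,\omega_2}$, we deduce $L^{\Omega}_{p_1,\omega_1}\subset L_{p_2,\omega_2}$. First I would pick a point $x_0\in\mathbb{R}^n$ and a radius $r>0$ with $B(x_0,r)\subset\operatorname{int}(\Gamma)$. Then, using compactness of $\Omega$, I would choose a smooth bump $h$ supported in a small ball around the origin (of radius less than $r$) together with a finite family of translates $h_j(\xi)=h(\xi-\kappa_j)$, $j\in A$, such that $\operatorname{supp}h_j\subset B(\kappa_j,r)$ and $\sum_{j\in A}h_j\equiv 1$ on $\Omega$. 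Set $P_j=\mathscr{F}^{-1}h_j\mathscr{F}$, so that for $f\in L^{\Omega}_{p_1,\omega_1}\cap\mathscr{S}$ one has $f=\sum_{j\in A}P_j f$.

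The key observation is that the modulation operator $M_{x_0-\kappa_j}$ shifts the Fourier support of $P_j f$ from $B(\kappa_j,r)$ to $B(x_0,r)\subset\Gamma$, so $M_{x_0-\kappa_j}P_j f\in L^{\Gamma}_{p_1,\omega_1}\cap\mathscr{S}$. Applying the hypothesis $L^{\Gamma}_{p_1,\omega_1}\subset L_{p_2,\omega_2}$ to $M_{x_0-\kappa_j}P_j f$ and using that modulation does not change the $L_{p,\omega}$-norm of a function (for weights $\omega$ depending only on $|x|$, which holds here since $\omega\in\mathscr{P}$ can be taken radial up to equivalence — or more simply since $|M_\xi g(x)|=|g(x)|$ pointwise), we get
\begin{equation}
\|P_j f\|_{L_{p_2,\omega_2}}=\|M_{x_0-\kappa_j}P_j f\|_{L_{p_2,\omega_2}}\lesssim\|M_{x_0-\kappa_j}P_j f\|_{L_{p_1,\omega_1}}=\|P_j f\|_{L_{p_1,\omega_1}}.
\end{equation}
Next I would bound $\|P_j f\|_{L_{p_1,\omega_1}}\lesssim\|f\|_{L_{p_1,\omega_1}}$: since $\widehat{P_j f}=h_j\widehat f$ and $f$ has compact Fourier support, $P_j f=(\mathscr{F}^{-1}h_j)\ast f$ restricted to band-limited functions, and by Young's inequality (when $p_1\geq 1$) or Lemma~\ref{lemma, convolution for p<1} (when $p_1<1$) together with moderateness of $\omega_1$, this is controlled by $\|\mathscr{F}^{-1}h_j\|_{L_{p_1\wedge 1,v}}\|f\|_{L_{p_1,\omega_1}}=\|\mathscr{F}^{-1}h\|_{L_{p_1\wedge 1,v}}\|f\|_{L_{p_1,\omega_1}}\lesssim\|f\|_{L_{p_1,\omega_1}}$, the constant being uniform in $j$ by translation-invariance of the $L_{p_1\wedge 1,v}$-norm up to the polynomial weight $v$. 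Summing over the finite set $A$ then gives
\begin{equation}
\|f\|_{L_{p_2,\omega_2}}=\Big\|\sum_{j\in A}P_j f\Big\|_{L_{p_2,\omega_2}}\lesssim\sum_{j\in A}\|P_j f\|_{L_{p_2,\omega_2}}\lesssim |A|\,\|f\|_{L_{p_1,\omega_1}}\lesssim\|f\|_{L_{p_1,\omega_1}},
\end{equation}
where in the quasi-Banach range $p_2<1$ the triangle inequality is replaced by its $p_2$-power version, still harmless since $A$ is finite. A standard density/limiting argument then extends the estimate from $\mathscr{S}\cap L^{\Omega}_{p_1,\omega_1}$ to all of $L^{\Omega}_{p_1,\omega_1}$.

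The main technical point to be careful about — rather than a genuine obstacle — is the uniformity of the constant $\|\mathscr{F}^{-1}h_j\|_{L_{p_1\wedge 1,v}}$ in $j$: because $v$ is a polynomial weight and the $\kappa_j$ range over a fixed compact set determined by $\Omega$, the translates $\mathscr{F}^{-1}h_j=M_{\kappa_j}\mathscr{F}^{-1}h$ satisfy $|\mathscr{F}^{-1}h_j(x)|=|\mathscr{F}^{-1}h(x)|$ pointwise, so the weighted norms are literally equal, and one only needs that $\mathscr{F}^{-1}h\in L_{p_1\wedge 1,v}$, which holds since $\mathscr{F}^{-1}h$ is Schwartz. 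One should also note that this lemma is exactly the degenerate (one-function) case of Lemma~\ref{lemma, independence of Gamma, product} obtained by formally deleting the second factor, so no new ideas beyond those already deployed there are required; accordingly the write-up can be kept brief and may simply point to that parallel.
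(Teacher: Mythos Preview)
Your proposal is correct and follows exactly the approach the paper intends: the paper omits the proof of this lemma, remarking only that it is the degenerate (one-function) version of Lemma~\ref{lemma, independence of Gamma, product} and Lemma~\ref{lemma, independence of Gamma, convolution}, and your argument carries out precisely that degeneration. The one cosmetic point is your parenthetical about radial weights, which is both unnecessary and not quite right; the reason modulation preserves the weighted norm is simply $|M_\xi g(x)|=|g(x)|$ pointwise, as you yourself note immediately afterwards, so you should delete the radial remark in a final write-up.
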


\section{Second reduction}
In this section, we continue the process of discretization.
Our methods are based on the fact that a band limited function (function with compact Fourier support) can be reconstructed by its values at sufficiently dense discrete points,
which is the core of Shannon's sampling theorem. One can find some related discussions in Chapter 1 of Tribel's book \cite{Tribel_book_1983}.

\begin{proposition}[\textbf{Second reduction, embedding}]\label{proposition, second reduction, embedding}
Suppose $0<p_j\leq \infty$, $\omega_j\in \mathscr{P}({R^n})$ for $j=1,2$.
Let $\Gamma$ be a compact subset of $\mathbb{R}^n$ whose interior  $int(\Gamma)$ is not empty.
Then
\begin{equation}
L^{\Gamma}_{p_1, \omega_1}\subset L_{p_2, \omega_2}
\end{equation}
if and only if
\begin{equation}
l_{p_1, \omega_1}\subset l_{p_2, \omega_2}.
\end{equation}
\end{proposition}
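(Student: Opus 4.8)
The plan is to establish the two implications separately, in each case transferring the inequality between the continuous band-limited space $L^{\Gamma}_{p_1,\omega_1}$ and the discrete space $l_{p_1,\omega_1}$ by means of a sampling/reconstruction argument. By Lemma \ref{lemma, independence of Gamma, embedding} we are free to replace $\Gamma$ by any convenient compact set with nonempty interior; the natural choice is a cube $Q=[-1/2,1/2]^n$ (or a slightly larger one), so that band-limited functions with Fourier support in $Q$ are determined by their values on the integer lattice $\mathbb{Z}^n$ via the classical Shannon sampling formula. I will also use freely that $\omega_1,\omega_2\in\mathscr{P}(\mathbb{R}^n)$ are $v$-moderate for a polynomial $v$, so that $\omega_i(x)\sim\omega_i(k)$ whenever $x\in Q_k$, up to a constant independent of $k$; this lets me pass between $\|f\|_{L_{p_i,\omega_i}}$ and weighted $l_{p_i}$ sums of local $L_{p_i}$ norms on the cubes $Q_k$.

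For the direction ``$l_{p_1,\omega_1}\subset l_{p_2,\omega_2}$ implies $L^{\Gamma}_{p_1,\omega_1}\subset L_{p_2,\omega_2}$'', given $f$ with $\widehat f$ supported in $Q$, I first note that $f$ is smooth and, writing $f_k:=f(k)$ for the lattice samples, one has the Nikol'skij-type comparison $\|f\|_{L_{p_1,\omega_1}}\sim\|\{f_k\}\|_{l_{p_1,\omega_1}}$ — this is the sampling lemma, proved by using that $f=\sum_k f_k\,T_k\psi$ for a fixed Schwartz function $\psi$ with $\widehat\psi=1$ on $Q$ and compactly supported, estimating each $\|T_k\psi\|_{L_{p_1\wedge 1}}$ uniformly and summing via the moderateness of $\omega_1$ (the convolution-in-$L_p$, $p<1$, estimate of Lemma \ref{lemma, convolution for p<1} handles the quasi-Banach range, exactly as in the proof of Proposition \ref{proposition, equivalent norm}). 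Applying the hypothesis $\|\{f_k\}\|_{l_{p_2,\omega_2}}\lesssim\|\{f_k\}\|_{l_{p_1,\omega_1}}$ and then the same sampling comparison in reverse for $p_2$ (note $f$ is still band-limited to $Q$), one gets $\|f\|_{L_{p_2,\omega_2}}\lesssim\|\{f_k\}\|_{l_{p_2,\omega_2}}\lesssim\|\{f_k\}\|_{l_{p_1,\omega_1}}\sim\|f\|_{L_{p_1,\omega_1}}$, which is the claimed embedding.

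For the converse direction, given a finitely supported sequence $\vec a=\{a_k\}$, I form the band-limited test function $f(x)=\sum_k a_k\,T_k h(x)$ (equivalently $f=\big(\sum_k a_k e^{2\pi i k\cdot(\cdot)}\big)*h$) where $h\in\mathscr{S}$ has $\widehat h$ supported in a small neighborhood of the origin and $\widehat h(0)\neq 0$; then $\widehat f$ is supported in $\Gamma$ (after enlarging $\Gamma$ if needed, which is legitimate by Lemma \ref{lemma, independence of Gamma, embedding}), and the near-disjointness of the translates $T_k h$ together with moderateness of the weights gives $\|f\|_{L_{p_i,\omega_i}}\sim\|\vec a\|_{l_{p_i,\omega_i}}$ for $i=1,2$. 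Feeding this into the hypothesis $L^{\Gamma}_{p_1,\omega_1}\subset L_{p_2,\omega_2}$ yields $\|\vec a\|_{l_{p_2,\omega_2}}\lesssim\|f\|_{L_{p_2,\omega_2}}\lesssim\|f\|_{L_{p_1,\omega_1}}\sim\|\vec a\|_{l_{p_1,\omega_1}}$, and a density argument extends this from finitely supported sequences to all of $l_{p_1,\omega_1}$. The main technical obstacle is the sampling comparison $\|f\|_{L_{p,\omega}}\sim\|\{f(k)\}\|_{l_{p,\omega}}$ in the quasi-Banach range $0<p<1$: one must verify that the reconstruction series $\sum_k f(k)\,T_k\psi$ converges appropriately and that the $p<1$ ``triangle inequality'' loss is absorbed by the rapid decay of $\psi$ and the polynomial control of the weight; this is precisely where Lemma \ref{lemma, convolution for p<1} (weighted convolution for $p<1$) is indispensable, and handling it carefully — rather than the bookkeeping with the moderate weights, which is routine — is the crux of the argument.
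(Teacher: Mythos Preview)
Your overall strategy matches the paper's: both directions rest on a sampling--reconstruction equivalence $\|f\|_{L_{p,\omega}}\sim\|\{f(k)\}\|_{l_{p,\omega}}$ for band-limited $f$, with Lemma~\ref{lemma, convolution for p<1} handling the quasi-Banach range, and Lemma~\ref{lemma, independence of Gamma, embedding} allowing $\Gamma$ to be replaced by a cube. The sufficiency direction is correctly sketched.

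There is, however, a genuine gap in your necessity argument. You claim that for $f=\sum_k a_k T_k h$ with $h\in\mathscr{S}$, $\widehat h$ compactly supported and $\widehat h(0)\neq 0$, ``near-disjointness of the translates $T_k h$'' gives $\|f\|_{L_{p_i,\omega_i}}\sim\|\vec a\|_{l_{p_i,\omega_i}}$. The upper bound is fine, but the lower bound $\|\vec a\|_{l_{p_2,\omega_2}}\lesssim\|f\|_{L_{p_2,\omega_2}}$ is false as stated. A Schwartz function with compact Fourier support is never compactly supported in space, so the translates $T_k h$ overlap everywhere; worse, the discrete symbol of the map $\vec a\mapsto\{f(k)\}=\vec a*\{h(j)\}$ is the periodization of $\widehat h$, which (since $\widehat h$ is supported strictly inside a fundamental domain) vanishes on a set of positive measure on the torus. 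Thus oscillating sequences such as $a_k=(-1)^k\mathbf 1_{|k|\le N}$ produce $f$ with $\|f\|_{L_{p_2}}$ bounded while $\|\vec a\|_{l_{p_2}}\to\infty$, and no uniform lower bound can hold.

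The paper avoids this by taking the test function $\varphi$ \emph{nonnegative} with $\varphi(0)>0$ and restricting (without loss of generality, since the $l_{p,\omega}$ norms depend only on $|\vec a|$) to nonnegative sequences $\vec a$. Then $C_{\vec a,\varphi}(x)\ge a_l\varphi(x-l)\gtrsim a_l$ for $x$ close to $l$, which immediately gives $\|C_{\vec a,\varphi}\|_{L_{p_2,\omega_2}}\gtrsim\|\vec a\|_{l_{p_2,\omega_2}}$. Your argument is easily repaired by inserting this positivity choice; but ``near-disjointness'' alone is not the mechanism that makes the lower bound work.
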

\begin{proof}
We only give the proof for the case $p_j<\infty\  (j=1,2)$, since the other cases can be handled similarly.
\\
\textbf{Necessity.}
Let $\varphi$ be a nonnegative smooth function with compact Fourier support contained in $\frac{1}{2}Q$, such that $\varphi(0)>0$.
We choose a nonnegative sequence $\vec{a}=\{a_k\}_{k\in \mathbb{Z}^n}\in l(p_1, \omega_1)$, and write
\begin{equation}\label{for proof, 1}
C_{\vec{a}, \varphi}(x)=\sum_{k\in \mathbb{Z}^n}a_k\varphi(x-k).
\end{equation}
By the assumption that $\vec{a}\in l(p_1, \omega_1)\subset l(\infty, \omega_1)$, $\omega_1\in \mathscr{P}(\mathbb{R}^n)$,
we can verify that the right hand side of (\ref{for proof, 1}) at least converges in the sense of $\mathscr{S}'$.
Thus, $C_{\vec{a}, \varphi}(x)\in \mathscr{S}'$ and its Fourier support is contained in $\frac{1}{2}Q$.

By the rapidly decay of $\varphi$, for $x\in Q_l$, we obtain that
\begin{equation}
|C_{\vec{a}, \varphi}(x)|\lesssim \sum_{k\in \mathbb{Z}^n}a_k\varphi(x-k)\lesssim \sum_{k\in \mathbb{Z}^n}a_k\langle l-k\rangle^{-N}
\end{equation}
for any fixed positive constant $N$. Thus,
\begin{equation}
\begin{split}
\left(\int_{Q_l}|C_{\vec{a}, \varphi}(x)|^{p_1}|\omega_1(x)|^{p_1}dx\right)^{1/{p_1}}
\lesssim &
\sum_{k\in \mathbb{Z}^n}a_k\langle l-k\rangle^{-N}\omega_1(l)
\\
= &
(\{a_k\}\ast \{\langle k\rangle^{-N}\})(l)\omega_1(l).
\end{split}
\end{equation}
Taking the $l_{p_1}$-norm on both sides of the above inequality, we deduce that
\begin{equation}
\begin{split}
\|C_{\vec{a}, \varphi}\|_{L_{p_1, \omega_1}}
=&
\left\|\left\{\left(\int_{Q_l}|C_{\vec{a}, \varphi}(x)|^{p_1}|\omega_1(x)|^{p_1}dx\right)^{1/{p_1}}\right\}\right\|_{l_{p_1}}
\\
\lesssim &
\|(\{a_k\}\ast \{\langle k\rangle^{-N}\})(l)\omega_1(l)\|_{l_{p_1}}
\\
=&
\|\{a_k\}\ast \{\langle k\rangle^{-N}\}\|_{l_{p_1, \omega_1}}
\\
\lesssim &
\|\vec{a}\|_{l_{p_1, \omega_1}}\cdot \|\{\langle k\rangle^{-N}\}\|_{l_{p_1\wedge 1, v}}
\lesssim
\|\vec{a}\|_{l_{p_1, \omega_1}}.
\end{split}
\end{equation}
We now obtain that $C_{\vec{a}, \varphi}\in L^{\frac{1}{2}Q}_{p_1, \omega_1}$ and $\|C_{\vec{a}, \varphi}\|_{L^{\frac{1}{2}Q}_{p_1, \omega_1}}\lesssim \|\vec{a}\|_{l_{p_1, \omega_1}}$.

On the other hand, recalling that $\vec{a}$ is a nonnegative sequence, $\varphi$ is a nonnegative smooth function with $\varphi(0)>0$,
we can take a small constant $\delta$ such that $\varphi(x)\gtrsim 1$ for $|x|\leq \delta$.
It follows that
\begin{equation}
C_{\vec{a}, \varphi}(x)=\sum_{k\in \mathbb{Z}^n}a_k\varphi(x-k)
\gtrsim a_l
\end{equation}
for $|x-l|\leq \delta$.
Thus
\begin{equation}
\left(\int_{Q_l}|C_{\vec{a}, \varphi}(x)|^{p_2}|\omega_2(x)|^{p_2}dx\right)^{1/{p_2}}
\gtrsim
a_l\omega_2(l).
\end{equation}
Taking the $l_{p_2}$-norm of both sides of the above inequality, we deduce that
\begin{equation}
\begin{split}
\|C_{\vec{a}, \varphi}\|_{L_{p_2, \omega_2}}
=&
\left\|\left\{\left(\int_{Q_l}|C_{\vec{a}, \varphi}(x)|^{p_2}|\omega_2(x)|^{p_2}dx\right)^{1/{p_2}}\right\}\right\|_{l_{p_2}}
\\
\gtrsim &
\|\vec{a}\|_{l_{p_2, \omega_2}}.
\end{split}
\end{equation}
Recalling $C_{\vec{a},\varphi}\in L_{p_1, \omega_1}^{\frac{1}{2}Q}$,
we use the assumption $L^{\Gamma}_{p_1, \omega_1}\subset L_{p_2, \omega_2}$ and Lemma \ref{lemma, independence of Gamma, embedding} to deduce
\begin{equation}
\|C_{\vec{a}, \varphi}\|_{L_{p_2, \omega_2}}
\lesssim
\|C_{\vec{a}, \varphi}\|_{L_{p_1, \omega_1}}.
\end{equation}
Hence
\begin{equation}
\|\vec{a}\|_{l_{p_2, \omega_2}}
\lesssim
\|C_{\vec{a}, \varphi}\|_{L_{p_2, \omega_2}}
\lesssim
\|C_{\vec{a}, \varphi}\|_{L_{p_1, \omega_1}}
\lesssim \|\vec{a}\|_{l_{p_1, \omega_1}}.
\end{equation}
We complete the proof for necessity.
\\
\textbf{Sufficiency.} In this part, we want to verify the embedding relation
$L^{\Gamma}_{p_1, \omega_1}\subset L^{\Gamma}_{p_2, \omega_2}$. By a standard limiting argument, we only need to verify the inequality
\begin{equation}
\|f\|_{L_{p_2, \omega_2}}\lesssim \|f\|_{L_{p_1, \omega_1}}
\end{equation}
for all $f\in \mathscr{S}^{\Gamma}$.
By the spirit of Lemma \ref{lemma, independence of Gamma, embedding}, we take $\Gamma=\frac{1}{2}Q$.

For a fixed $f\in \mathscr{S}^{\Gamma}$, then $\check{f}$ is a smooth function supported in $\frac{1}{2}Q$.
The periodic extension of $\check{f}$, denoted by $P(\check{f})(\xi)=\sum_{l\in \mathbb{Z}^n}\check{f}(\xi+l)$, is defined on $\mathbb{T}^n$. We have the Fourier series
\begin{equation}
P(\check{f})(\xi)=\sum_{k\in \mathbb{Z}^n}f(k)e^{2\pi ik\xi}.
\end{equation}
Taking a smooth cut-off function $\mathscr{F}^{-1}\psi$ with compact support contained in $\frac{2}{3}Q$, such that $\check{\psi}(\xi)=1$ on $\frac{1}{2}Q$, we reconstruct $\check{f}$ by
\begin{equation}
\check{f}(\xi)=\sum_{k\in \mathbb{Z}^n}f(k)e^{2\pi ik\xi}\check{\psi}(\xi).
\end{equation}
We take the Fourier transform of both sides to obtain
\begin{equation}
f(x)=\sum_{k\in \mathbb{Z}^n}f(k)\psi(x-k).
\end{equation}
By the same method in the proof of sufficiency part, we conclude
\begin{equation}
\|f\|_{L_{p_2, \omega_2}}\lesssim \|\{f(k)\}\|_{l_{p_2,\omega_2}}.
\end{equation}

On the other hand, $f(k)$ can be expressed as
\begin{equation}
f(k)=\mathscr{F}(\check{f}\cdot \check{\psi})(k)=\int_{\mathbb{R}}f(y)\psi(k-y)dy.
\end{equation}
For a fixed positive constant $N$, we can find a positive smooth function $\phi$ with compact Fourier support contained in $\frac{1}{2}Q$, such that
\begin{equation}
\phi(x) \gtrsim \langle x\rangle^{-N}.
\end{equation}
In fact, take a nonnegative smooth function $\rho$ \ satisfying $\textbf{Supp}\hat{\rho}\subset \frac{1}{2}Q$ and $\rho(0)>0$.
The function $\phi$ can be chosen as
\begin{equation}
\phi(x)=\int_{\mathbb{R}^n}\rho(x-y)\langle y\rangle^{-N}dy.
\end{equation}
Observing
\begin{equation}
|\psi(k-y)|\lesssim \langle x-y\rangle^{-N}\lesssim  \phi(x-y)
\end{equation}
for $x\in Q_k$,
we obtain that
\begin{equation}
|f(k)|\lesssim \int_{\mathbb{R}}|f(y)||\psi(k-y)|dy\lesssim \int_{\mathbb{R}}|f(y)||\phi(x-y)|dy=(|f|\ast |\phi|)(x)
\end{equation}
for $x\in Q_k$. It then follows
\begin{equation}
|f(k)\omega_1(k)|\lesssim \left(\int_{Q_k}(|f|\ast |\phi|)^{p_1}(x)|\omega_1(x)|^{p_1}dx\right)^{1/{p_1}}.
\end{equation}
Using Young's inequality or Lemma \ref{lemma, convolution for p<1}, we obtain that
\begin{equation}
\begin{split}
\|\{f(k)\}\|_{l_{p_1, \omega_1}}
\lesssim &
\left\|\left\{\left(\int_{Q_k}(|f|\ast |\phi|)^{p_1}(x)|\omega_1(x)|^{p_1}dx\right)^{1/{p_1}}\right\}\right\|_{l_{p_1}}
\\
= &
\left(\int_{\mathbb{R}^n}(|f|\ast |\phi|)^{p_1}(x)|\omega_1(x)|^{p_1}dx\right)^{1/{p_1}}
\\
= &
\||f|\ast |\phi|\|_{L_{p_1, \omega_1}}
\lesssim
\|f\|_{L_{p_1, \omega_1}}\|\phi\|_{L_{p_1\wedge 1, v}}
\lesssim
\|f\|_{L_{p_1, \omega_1}}.
\end{split}
\end{equation}
By the assumption $l_{p_1, \omega_1}\subset l_{p_2, \omega_2}$, we obtain
\begin{equation}
\|f\|_{L_{p_2, \omega_2}}
\lesssim
\|\{f(k)\}\|_{l_{p_2,\omega_2}}
\lesssim
\|\{f(k)\}\|_{l_{p_1,\omega_1}}
\lesssim
\|f\|_{L_{p_1, \omega_1}}
\end{equation}
for all $f\in \mathscr{S}^{\Gamma}$.
We complete the proof for sufficiency.
\end{proof}

\begin{proposition}[\textbf{Second reduction, product}]\label{proposition, second reduction, product}
Suppose $0<p, p_j\leq \infty$, $\omega,\omega_j\in \mathscr{P}({R^n})$ for $j=1,2$.
Let $\Gamma$ be a compact subset of $\mathbb{R}^n$ whose interior  $int(\Gamma)$ is not empty.
Then
\begin{equation}
L^{\Gamma}_{p_1, \omega_1}\cdot L^{\Gamma}_{p_2, \omega_2} \subset L_{p, \omega}
\end{equation}
if and only if
\begin{equation}
l_{p_1, \omega_1}\cdot l_{p_2, \omega_2} \subset l_{p, \omega}.
\end{equation}
\end{proposition}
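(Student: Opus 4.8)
The plan is to mimic the strategy of Proposition \ref{proposition, second reduction, embedding}, upgrading it from an embedding to a bilinear product inequality. By Lemma \ref{lemma, independence of Gamma, product} we are free to choose $\Gamma = \tfrac{1}{2}Q$ throughout, which makes the sampling constructions clean. The argument splits into necessity and sufficiency, each built on the same ``dictionary'' between band-limited functions and sequences that was used before: a nonnegative sequence $\vec a = \{a_k\}$ is encoded as the band-limited function $C_{\vec a,\varphi}(x) = \sum_k a_k \varphi(x-k)$ with $\varphi$ a fixed nonnegative Schwartz function, $\varphi(0)>0$, $\mathrm{supp}\,\hat\varphi \subset \tfrac12 Q$; conversely a function $f\in \mathscr{S}^{\Gamma}$ is encoded as its sample sequence $\{f(k)\}$ and reconstructed via $f(x) = \sum_k f(k)\psi(x-k)$ with a suitable $\mathscr{F}^{-1}\psi$ supported slightly larger. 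The two facts established in the proof of Proposition \ref{proposition, second reduction, embedding} that I will reuse verbatim are: (i) $\|C_{\vec a,\varphi}\|_{L_{p_i,\omega_i}} \sim_{\,\gtrsim}\ \|\vec a\|_{l_{p_i,\omega_i}}$ in the sense that $\|C_{\vec a,\varphi}\|_{L_{p_i,\omega_i}} \lesssim \|\vec a\|_{l_{p_i,\omega_i}}$ and, since $\vec a \geq 0$ and $\varphi \gtrsim 1$ near $0$, $a_l \lesssim \|C_{\vec a,\varphi}\|_{L_{p_i}(Q_l,\omega_i)}$; and (ii) for $f\in\mathscr{S}^{\Gamma}$, $\|f\|_{L_{p_i,\omega_i}} \lesssim \|\{f(k)\}\|_{l_{p_i,\omega_i}}$ and $\|\{f(k)\}\|_{l_{p_i,\omega_i}} \lesssim \|f\|_{L_{p_i,\omega_i}}$ (the latter from the pointwise bound $|f(k)| \lesssim (|f|\ast|\phi|)(x)$ for $x\in Q_k$ together with weighted Young / Lemma \ref{lemma, convolution for p<1}).

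\textbf{Necessity.} Assume $L^{\Gamma}_{p_1,\omega_1}\cdot L^{\Gamma}_{p_2,\omega_2}\subset L_{p,\omega}$. Given nonnegative sequences $\vec a \in l_{p_1,\omega_1}$, $\vec b \in l_{p_2,\omega_2}$, form $F = C_{\vec a,\varphi}$ and $G = C_{\vec b,\varphi}$. Then $F,G$ are band-limited with Fourier support in $\tfrac12 Q$, hence so is $FG$ (support in $Q$), and by fact (i), $\|F\|_{L_{p_1,\omega_1}} \lesssim \|\vec a\|_{l_{p_1,\omega_1}}$, $\|G\|_{L_{p_2,\omega_2}} \lesssim \|\vec b\|_{l_{p_2,\omega_2}}$. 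The product is
\begin{equation*}
(FG)(x) = \sum_{j,l} a_j b_l\, \varphi(x-j)\varphi(x-l),
\end{equation*}
and since each summand is concentrated near $x\approx j \approx l$, the diagonal-ish contribution gives the lower bound $(FG)(x) \gtrsim \sum_j a_j b_j\, \varphi(x-j)^2 \gtrsim a_l b_l$ for $|x-l|$ small; more carefully one checks $\|(FG)\|_{L_{p}(Q_l,\omega)} \gtrsim a_l b_l\, \omega(l)$, so $\|FG\|_{L_{p,\omega}} \gtrsim \|\vec a \cdot \vec b\|_{l_{p,\omega}}$. (Here one should use that the cross terms with $j\neq l$ are harmless because $\vec a,\vec b \geq 0$, so there is no cancellation and the diagonal term is a genuine lower bound.) Combining, $\|\vec a\cdot\vec b\|_{l_{p,\omega}} \lesssim \|FG\|_{L_{p,\omega}} \lesssim \|F\|_{L_{p_1,\omega_1}}\|G\|_{L_{p_2,\omega_2}} \lesssim \|\vec a\|_{l_{p_1,\omega_1}}\|\vec b\|_{l_{p_2,\omega_2}}$, which is $l_{p_1,\omega_1}\cdot l_{p_2,\omega_2}\subset l_{p,\omega}$ for nonnegative sequences, hence in general by splitting into positive and negative parts.

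\textbf{Sufficiency.} Assume $l_{p_1,\omega_1}\cdot l_{p_2,\omega_2}\subset l_{p,\omega}$. Take $\Gamma = \tfrac12 Q$ and, by a limiting argument, let $f\in\mathscr{S}^{\Gamma}$, $g\in\mathscr{S}^{\Gamma}$. The product $fg$ has Fourier support in $Q$, a fixed compact set, so by fact (ii) applied with $\Gamma$ replaced by $Q$ (legitimate after invoking Lemma \ref{lemma, independence of Gamma, embedding}-type reasoning, or directly redoing the sampling estimate on $Q$), we have $\|fg\|_{L_{p,\omega}} \lesssim \|\{(fg)(k)\}\|_{l_{p,\omega}} = \|\{f(k)g(k)\}\|_{l_{p,\omega}} = \|\{f(k)\}\cdot\{g(k)\}\|_{l_{p,\omega}}$. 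Apply the hypothesis $l_{p_1,\omega_1}\cdot l_{p_2,\omega_2}\subset l_{p,\omega}$ to bound this by $\|\{f(k)\}\|_{l_{p_1,\omega_1}}\|\{g(k)\}\|_{l_{p_2,\omega_2}}$, and finally use fact (ii) in the direction $\|\{f(k)\}\|_{l_{p_i,\omega_i}} \lesssim \|f\|_{L_{p_i,\omega_i}}$ (the $|f(k)|\lesssim(|f|\ast|\phi|)(x)$ bound plus weighted Young / Lemma \ref{lemma, convolution for p<1}) to conclude $\|fg\|_{L_{p,\omega}} \lesssim \|f\|_{L_{p_1,\omega_1}}\|g\|_{L_{p_2,\omega_2}}$. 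By density this gives $L^{\Gamma}_{p_1,\omega_1}\cdot L^{\Gamma}_{p_2,\omega_2}\subset L_{p,\omega}$, and Lemma \ref{lemma, independence of Gamma, product} removes the restriction on $\Gamma$.

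\textbf{Main obstacle.} The delicate point is the lower bound in the necessity half: one must verify that the sampled product $\{f(k)g(k)\}$ (equivalently, the values of $C_{\vec a,\varphi}C_{\vec b,\varphi}$ at, or near, lattice points) really does control $\vec a \cdot \vec b$ from below uniformly in the weights, i.e. that the off-diagonal terms $\varphi(x-j)\varphi(x-l)$ with $j\neq l$ do not conspire against us. This is where nonnegativity of $\vec a,\vec b$ is essential — it guarantees the diagonal term $a_l b_l \varphi(x-l)^2 \gtrsim a_l b_l$ near $x=l$ is a true lower bound for $(FG)(x)$, with no cancellation — and where one uses $m,\omega \in \mathscr{P}$ only through the local regularity $\omega(x)\sim\omega(l)$ on $Q_l$. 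Everything else is a direct transcription of the machinery already developed for Proposition \ref{proposition, second reduction, embedding}, now carried out for two functions and their pointwise product rather than one function and an embedding.
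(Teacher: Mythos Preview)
Your proof is correct and follows essentially the same strategy as the paper's. The only cosmetic difference is that the paper takes $\Gamma=\tfrac14 Q$ (and $\mathrm{supp}\,\hat\varphi\subset\tfrac14 Q$) rather than $\tfrac12 Q$, so that the product $fg$ has Fourier support in $\tfrac12 Q$ and the sampling equivalences from Proposition~\ref{proposition, second reduction, embedding} apply verbatim, avoiding the small extra justification you flag for ``$\Gamma$ replaced by $Q$''.
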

\begin{proof}
We divide the proof into two parts.
\\
\textbf{Necessity.}
The goal of this part is to verify the relation $l_{p_1, \omega_1}\cdot l_{p_2, \omega_2} \subset l_{p, \omega}$.
By a standard limiting argument, we only need to verify the inequality
\begin{equation}
\|\vec{a}\cdot \vec{b}\|_{l_{p, \omega}}\lesssim \|\vec{a}\|_{l_{p_1, \omega_1}}\|\vec{b}\|_{l_{p_2, \omega_2}}
\end{equation}
for any two nonnegative truncated (only finite nonzero items) sequences $\vec{a}$ and $\vec{b}$.

Let $\varphi$ be a nonnegative smooth function with compact Fourier support contained in $\frac{1}{4}Q$, and satisfying $\varphi(0)>0$.
For two fixed positive truncated sequences $\vec{a}=\{a_k\}_{k\in \mathbb{Z}^n}$ and $\vec{b}=\{b_k\}_{k\in \mathbb{Z}^n}$,
we write
\begin{equation}
C_{\vec{a}, \varphi}(x)=\sum_{k\in \mathbb{Z}^n}a_k\varphi(x-k),~C_{\vec{b}, \varphi}(x)=\sum_{k\in \mathbb{Z}^n}b_k\varphi(x-k).
\end{equation}
As in the proof of Proposition \ref{proposition, second reduction, embedding}, we have
$C_{\vec{a}, \varphi}\in L^{\frac{1}{4}Q}_{p_1, \omega_1}$, $C_{\vec{b}, \varphi}\in L^{\frac{1}{4}Q}_{p_2, \omega_2}$,  and
\begin{equation}
\|C_{\vec{a}, \varphi}\|_{L_{p_1, \omega_1}}\sim \|\vec{a}\|_{l_{p_1, \omega_1}} ,~\|C_{\vec{b}, \varphi}\|_{L_{p_2, \omega_2}}\sim \|\vec{b}\|_{l_{p_2, \omega_2}} .
\end{equation}
In addition, by the truncated property of $\vec{a}$ and $\vec{b}$, both $C_{\vec{a}, \varphi}$ and $C_{\vec{b}, \varphi}$ are Schwartz functions.
Hence, $C_{\vec{a}, \varphi}\cdot C_{\vec{b}, \varphi}$ is a Schwartz function with Fourier support in $\frac{1}{2}Q$.
We use the assumption $L^{\Gamma}_{p_1, \omega_1}\cdot L^{\Gamma}_{p_2, \omega_2} \subset L_{p, \omega}$ to deduce
\begin{equation}
\|C_{\vec{a}, \varphi}\cdot C_{\vec{b}, \varphi}\|_{L_{p, \omega}}
\lesssim
\|C_{\vec{a}, \varphi}\|_{L_{p_1, \omega_1}}\|C_{\vec{b}, \varphi}\|_{L_{p_2, \omega_2}}.
\end{equation}
On the other hand, by the nonnegative of $\varphi$ and the fact $\varphi(0)>0$,
we obtain that
\begin{equation}
(C_{\vec{a}, \varphi}\cdot C_{\vec{b}, \varphi})(x)=\sum_{i, j\in \mathbb{Z}^n}a_i b_j\varphi(x-i)\varphi(x-j)
\gtrsim a_lb_l
\end{equation}
for $|x-l|\leq \delta$.
Thus
\begin{equation}
\left(\int_{Q_l}|(C_{\vec{a}, \varphi}\cdot C_{\vec{b}, \varphi})(x)|^{p}|\omega(x)|^{p}dx\right)^{1/{p}}
\gtrsim
a_lb_l\omega(l).
\end{equation}
So,
\begin{equation}
\begin{split}
\|C_{\vec{a}, \varphi}\cdot C_{\vec{b}, \varphi}\|_{L_{p, \omega}}
=&
\left\|\left\{\left(\int_{Q_l}|(C_{\vec{a}, \varphi}\cdot C_{\vec{b}, \varphi})(x)|^{p}|\omega(x)|^{p}dx\right)^{1/{p}}\right\}\right\|_{l_{p}}
\\
\gtrsim &
\|\{a_lb_l\omega(l)\}\|_{l_{p}}=\|\vec{a}\cdot \vec{b}\|_{l_{p,\omega}}.
\end{split}
\end{equation}
It then yields that
\begin{equation}
\begin{split}
\|\vec{a}\cdot \vec{b}\|_{l_{p,\omega}}
\lesssim &
\|C_{\vec{a}, \varphi}\cdot C_{\vec{b}, \varphi}\|_{L_{p, \omega}}
\\
\lesssim &
\|C_{\vec{a}, \varphi}\|_{L_{p_1, \omega_1}}\|C_{\vec{b}, \varphi}\|_{L_{p_2, \omega_2}}
\\
\lesssim &
\|\vec{a}\|_{l_{p_1,\omega_1}}\|\vec{b}\|_{l_{p_2,\omega_2}}.
\end{split}
\end{equation}
We finish the proof of necessity.
\\
\textbf{Sufficiency.}
In this part, we want to verify the relation $L^{\Gamma}_{p_1, \omega_1}\cdot L^{\Gamma}_{p_2, \omega_2} \subset L_{p, \omega}$.
By the spirit of Lemma \ref{lemma, independence of Gamma, product}, we assume $\Gamma=\frac{1}{4}Q$.

Take $f, g\in \mathscr{S}^{\frac{1}{4}Q}$.
Then
$f\cdot g$ is also a Schwartz function, whose Fourier support is contained in $\frac{1}{2}Q$.
As in the proof of Proposition \ref{proposition, second reduction, product}, we can verify that
\begin{equation}
\|f\|_{L_{p_1,\omega_1}}\sim \|\{f(k)\}\|_{l_{p_1,\omega_1}},~\|g\|_{L_{p_2,\omega_2}}\sim \|\{g(k)\}\|_{l_{p_2,\omega_2}},
\end{equation}
and
\begin{equation}
\|fg\|_{L_{p,\omega}}\sim \|\{f(k)g(k)\}\|_{l_{p,\omega}}.
\end{equation}
Now we use the assumption $l_{p_1, \omega_1}\cdot l_{p_2, \omega_2} \subset l_{p, \omega}$ to deduce
\begin{equation}
\begin{split}
\|fg\|_{L_{p, \omega}}
\sim &
\|\{f(k)g(k)\}\|_{l_{p,\omega}}
\\
\lesssim &
\|\{f(k)\}\|_{l_{p_1,\omega_1}}\|\{g(k)\}\|_{l_{p_2,\omega_2}}
\\
\sim &
\|f\|_{L_{p_1,\omega_1}}\|g\|_{L_{p_2,\omega_2}}.
\end{split}
\end{equation}
We complete the proof of sufficiency.
\end{proof}

\begin{proposition}[\textbf{Second reduction, convolution}]\label{proposition, second reduction, convolution}
Suppose $0<p, p_j\leq \infty$, $\omega, \omega_j\in \mathscr{P}({R^n})$ for $j=1,2$.
Let $\Gamma$ be a compact subset of $\mathbb{R}^n$ whose interior  $int(\Gamma)$ is not empty.
Then
\begin{equation}
L^{\Gamma}_{p_1, \omega_1}\ast L^{\Gamma}_{p_2, \omega_2} \subset L_{p, \omega}
\end{equation}
if and only if
\begin{equation}
l_{p_1, \omega_1}\ast l_{p_2, \omega_2} \subset l_{p, \omega}.
\end{equation}
\end{proposition}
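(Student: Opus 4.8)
The plan is to transcribe the proof of Proposition \ref{proposition, second reduction, product}, replacing the pointwise product of ``comb'' functions by their convolution. The mechanism that makes this work is that a convolution of two combs is again a comb whose coefficient sequence is the discrete convolution of the two original sequences, because $\varphi(\cdot-i)\ast\varphi(\cdot-j)=(\varphi\ast\varphi)(\cdot-i-j)$. Throughout, by Lemma \ref{lemma, independence of Gamma, convolution} we may replace the given $\Gamma$ by any fixed compact set with nonempty interior, and we take $\Gamma=\tfrac{1}{4}Q$.

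\textbf{Necessity.} It suffices to prove $\|\vec a\ast\vec b\|_{l_{p,\omega}}\lesssim\|\vec a\|_{l_{p_1,\omega_1}}\|\vec b\|_{l_{p_2,\omega_2}}$ for nonnegative truncated sequences $\vec a,\vec b$, and then pass to the limit. Pick a nonnegative $\varphi\in\mathscr{S}$ with $\widehat{\varphi}$ supported in $\tfrac{1}{4}Q$ and $\varphi(0)>0$, and set $C_{\vec a,\varphi}=\sum_k a_k\varphi(\cdot-k)$, $C_{\vec b,\varphi}=\sum_k b_k\varphi(\cdot-k)$. Exactly as in the proof of Proposition \ref{proposition, second reduction, embedding}, these belong to $L^{\frac{1}{4}Q}_{p_1,\omega_1}$ and $L^{\frac{1}{4}Q}_{p_2,\omega_2}$ respectively, with $\|C_{\vec a,\varphi}\|_{L_{p_1,\omega_1}}\sim\|\vec a\|_{l_{p_1,\omega_1}}$ and $\|C_{\vec b,\varphi}\|_{L_{p_2,\omega_2}}\sim\|\vec b\|_{l_{p_2,\omega_2}}$, and both are Schwartz functions by truncation. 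Writing $\Phi=\varphi\ast\varphi$, which is a nonnegative Schwartz function with $\Phi\gtrsim1$ for $|x|\leq\delta$ and $\widehat{\Phi}$ supported in $\tfrac14 Q$, one computes
\begin{equation*}
(C_{\vec a,\varphi}\ast C_{\vec b,\varphi})(x)=\sum_{l\in\mathbb{Z}^n}(\vec a\ast\vec b)(l)\,\Phi(x-l).
\end{equation*}
The hypothesis $L^{\Gamma}_{p_1,\omega_1}\ast L^{\Gamma}_{p_2,\omega_2}\subset L_{p,\omega}$ gives the upper bound $\|C_{\vec a,\varphi}\ast C_{\vec b,\varphi}\|_{L_{p,\omega}}\lesssim\|C_{\vec a,\varphi}\|_{L_{p_1,\omega_1}}\|C_{\vec b,\varphi}\|_{L_{p_2,\omega_2}}\sim\|\vec a\|_{l_{p_1,\omega_1}}\|\vec b\|_{l_{p_2,\omega_2}}$, while discarding all but one term in the sum above and using $\Phi\gtrsim1$ near the origin yields, exactly as in Proposition \ref{proposition, second reduction, product}, the lower bound $\|C_{\vec a,\varphi}\ast C_{\vec b,\varphi}\|_{L_{p,\omega}}\gtrsim\|\vec a\ast\vec b\|_{l_{p,\omega}}$. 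Combining the two bounds completes the necessity.

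\textbf{Sufficiency.} By a standard limiting argument it suffices to bound $\|f\ast g\|_{L_{p,\omega}}$ for $f,g\in\mathscr{S}^{\frac{1}{4}Q}$. As in the proof of Proposition \ref{proposition, second reduction, embedding}, choose a Schwartz reconstruction kernel $\psi$ (with $\check{\psi}=1$ on $\tfrac12 Q$) so that $f=\sum_k f(k)\psi(\cdot-k)$, $g=\sum_k g(k)\psi(\cdot-k)$, and moreover $\|f\|_{L_{p_1,\omega_1}}\sim\|\{f(k)\}\|_{l_{p_1,\omega_1}}$, $\|g\|_{L_{p_2,\omega_2}}\sim\|\{g(k)\}\|_{l_{p_2,\omega_2}}$ (both sample-norm equivalences being part of that proof). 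Since $f,g$ are Schwartz their sample sequences decay rapidly, so with $\Psi=\psi\ast\psi\in\mathscr{S}$ we may rearrange
\begin{equation*}
f\ast g=\sum_{l\in\mathbb{Z}^n}\big(\{f(m)\}\ast\{g(m)\}\big)(l)\,\Psi(\cdot-l).
\end{equation*}
The argument used in Proposition \ref{proposition, second reduction, embedding} to estimate a comb by its coefficient sequence (which only uses the rapid decay of $\Psi$ together with Young's inequality or Lemma \ref{lemma, convolution for p<1}) gives $\|f\ast g\|_{L_{p,\omega}}\lesssim\|\{f(m)\}\ast\{g(m)\}\|_{l_{p,\omega}}$. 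Applying the hypothesis $l_{p_1,\omega_1}\ast l_{p_2,\omega_2}\subset l_{p,\omega}$ and then the sample-norm equivalences,
\begin{equation*}
\|f\ast g\|_{L_{p,\omega}}\lesssim\|\{f(m)\}\|_{l_{p_1,\omega_1}}\|\{g(m)\}\|_{l_{p_2,\omega_2}}\sim\|f\|_{L_{p_1,\omega_1}}\|g\|_{L_{p_2,\omega_2}},
\end{equation*}
which is the desired estimate.

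\textbf{Expected main obstacle.} There is essentially no new analytic difficulty beyond Propositions \ref{proposition, second reduction, embedding} and \ref{proposition, second reduction, product}; the one point requiring care is the bookkeeping that $f\ast g$ (resp.\ $C_{\vec a,\varphi}\ast C_{\vec b,\varphi}$) really is a comb with coefficient sequence $\{f(m)\}\ast\{g(m)\}$ (resp.\ $\vec a\ast\vec b$), which rests on $\psi(\cdot-i)\ast\psi(\cdot-j)=(\psi\ast\psi)(\cdot-i-j)$ and on justifying the interchange of the double series — immediate once the sequences are truncated in the necessity part, and immediate from the Schwartz decay of $\{f(m)\},\{g(m)\}$ in the sufficiency part.
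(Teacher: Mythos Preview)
Your proof is correct. The necessity argument matches the paper's essentially verbatim (the only cosmetic difference is that you take $\widehat\varphi$ supported in $\tfrac14 Q$ rather than $\tfrac12 Q$). In the sufficiency direction you take a slightly different but equally valid route: the paper establishes the \emph{exact} sampling identity $(f\ast g)(k)=\sum_{j+l=k}f(j)g(l)$ via a Fourier-series computation on the torus, and then invokes the two-sided equivalence $\|f\ast g\|_{L_{p,\omega}}\sim\|\{(f\ast g)(k)\}\|_{l_{p,\omega}}$ for band-limited functions; you instead expand $f\ast g$ directly as a comb $\sum_l c_l\Psi(\cdot-l)$ with $c_l=(\{f(m)\}\ast\{g(m)\})(l)$ and $\Psi=\psi\ast\psi$, and apply only the one-sided comb upper bound from Proposition~\ref{proposition, second reduction, embedding}. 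Your argument is marginally more elementary (it avoids the Poisson-summation step), while the paper's approach yields a cleaner exact identity; both reach the same conclusion.
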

\begin{proof}
We divide the proof into two parts.
\\
\textbf{Necessity.}
The goal of this part is to verify the relation $l_{p_1, \omega_1}\ast l_{p_2, \omega_2} \subset l_{p, \omega}$.
By a standard limiting argument, we only need to verify the inequality
\begin{equation}
\|\vec{a}\ast \vec{b}\|_{l_{p, \omega}}\lesssim \|\vec{a}\|_{l_{p_1, \omega_1}}\|\vec{b}\|_{l_{p_2, \omega_2}}
\end{equation}
for any two nonnegative truncated (only finite nonzero items) sequences $\vec{a}$ and $\vec{b}$.

Let $\varphi$ be a nonnegative smooth function with compact Fourier support contained in $\frac{1}{2}Q$, such that $\varphi(0)>0$.
For two fixed positive truncated sequences $\vec{a}=\{a_k\}_{k\in \mathbb{Z}^n}$ and $\vec{b}=\{b_k\}_{k\in \mathbb{Z}^n}$,
we write
\begin{equation}
C_{\vec{a}, \varphi}(x)=\sum_{k\in \mathbb{Z}^n}a_k\varphi(x-k),~C_{\vec{b}, \varphi}(x)=\sum_{k\in \mathbb{Z}^n}b_k\varphi(x-k).
\end{equation}
As in the proof of Proposition \ref{proposition, second reduction, embedding}, we obtain that
$C_{\vec{a}, \varphi}\in L^{\frac{1}{2}Q}_{p_1, \omega_1}$, $C_{\vec{b}, \varphi}\in L^{\frac{1}{2}Q}_{p_2, \omega_2}$,  and
\begin{equation}
\|C_{\vec{a}, \varphi}\|_{L_{p_1, \omega_1}}\sim \|\vec{a}\|_{l_{p_1, \omega_1}} ,~\|C_{\vec{b}, \varphi}\|_{L_{p_2, \omega_2}}\sim \|\vec{b}\|_{l_{p_2, \omega_2}} .
\end{equation}
In addition, by the truncated property of $\vec{a}$ and $\vec{b}$, both $C_{\vec{a}, \varphi}$ and $C_{\vec{b}, \varphi}$ are Schwartz functions.
Also, $C_{\vec{a}, \varphi}\ast C_{\vec{b}, \varphi}$ is a Schwartz function with Fourier support in $\frac{1}{2}Q$.

We use the assumption $L^{\Gamma}_{p_1, \omega_1}\ast L^{\Gamma}_{p_2, \omega_2} \subset L_{p, \omega}$ to deduce
\begin{equation}
\|C_{\vec{a}, \varphi}\ast C_{\vec{b}, \varphi}\|_{L_{p, \omega}}
\lesssim
\|C_{\vec{a}, \varphi}\|_{L_{p_1, \omega_1}}\|C_{\vec{b}, \varphi}\|_{L_{p_2, \omega_2}}.
\end{equation}
On the other hand, by the positivity of $\varphi$ and the fact $\varphi(0)>0$,
we obtain
\begin{equation}
(C_{\vec{a}, \varphi}\ast C_{\vec{b}, \varphi})(x)=\sum_{i, j\in \mathbb{Z}^n}a_i b_j(\varphi\ast \varphi)(x-i-j)
\gtrsim \sum_{i+j=l}a_ib_j
\end{equation}
for $|x-l|\leq \delta$.
Thus
\begin{equation}
\left(\int_{Q_l}|(C_{\vec{a}, \varphi}\ast C_{\vec{b}, \varphi})(x)|^{p}|\omega(x)|^{p}dx\right)^{1/{p}}
\gtrsim
\sum_{i+j=l}a_ib_j\omega(l).
\end{equation}
So,
\begin{equation}
\begin{split}
\|C_{\vec{a}, \varphi}\ast C_{\vec{b}, \varphi}\|_{L_{p, \omega}}
=&
\left\|\left\{\left(\int_{Q_l}|(C_{\vec{a}, \varphi}\ast C_{\vec{b}, \varphi})(x)|^{p}|\omega(x)|^{p}dx\right)^{1/{p}}\right\}_{l\in \mathbb{Z}^n}\right\|_{l_{p}}
\\
\gtrsim &
\left\|\left\{\sum_{i+j=l}a_ib_j\omega(l)\right\}_{l\in \mathbb{Z}^n}\right\|_{l_{p}}=\|\vec{a}\ast \vec{b}\|_{l_{p,\omega}}.
\end{split}
\end{equation}
We verify
\begin{equation}
\begin{split}
\|\vec{a}\ast \vec{b}\|_{l_{p,\omega}}
\lesssim &
\|C_{\vec{a}, \varphi}\ast C_{\vec{b}, \varphi}\|_{L_{p, \omega}}
\\
\lesssim &
\|C_{\vec{a}, \varphi}\|_{L_{p_1, \omega_1}}\|C_{\vec{b}, \varphi}\|_{L_{p_2, \omega_2}}
\\
\lesssim &
\|\vec{a}\|_{l_{p_1,\omega_1}}\|\vec{b}\|_{l_{p_2,\omega_2}}.
\end{split}
\end{equation}
This finishes the proof of necessity.
\\
\textbf{Sufficiency.}
In this part, we want to verify the relation $L^{\Gamma}_{p_1, \omega_1}\ast L^{\Gamma}_{p_2, \omega_2} \subset L_{p, \omega}$.
By the spirit of Lemma \ref{lemma, independence of Gamma, product}, we assume $\Gamma=\frac{1}{2}Q$.

Take $f, g\in \mathscr{S}^{\frac{1}{2}Q}$. Then
$f\ast g$ is also a Schwartz function, whose Fourier support is contained in $\frac{1}{2}Q$.
As in the proof of Proposition \ref{proposition, second reduction, product}, we can verify that
\begin{equation}
\|f\|_{L_{p_1,\omega_1}}\sim \|\{f(k)\}\|_{l_{p_1,\omega_1}},~\|g\|_{L_{p_2,\omega_2}}\sim \|\{g(k)\}\|_{l_{p_2,\omega_2}}.
\end{equation}
Moreover, we have the following equality:
\begin{equation}
(f\ast g)(k)=\sum_{j+l=k}f(j)g(l).
\end{equation}
As in the proof of Proposition \ref{proposition, second reduction, embedding}, we express the periodizations of $\check{f}$ and $\check{g}$ by their Fourier series
\begin{equation}
P(\check{f})(\xi)=\sum_{k\in \mathbb{Z}^n}f(k)e^{2\pi ik\xi},\ P(\check{g})(\xi)=\sum_{k\in \mathbb{Z}^n}g(k)e^{2\pi ik\xi}.
\end{equation}

A direct calculation now gives that
\begin{equation}
P(\check{f}\check{g})(\xi)=(P(\check{f})P(\check{g}))(\xi)=\sum_{j,l\in \mathbb{Z}^n}f(j)g(l)e^{2\pi i(j+l)\xi}.
\end{equation}
In the above equality, on both sides we multiply a spatial phase $e^{-2\pi ik\xi }$ and take integration over the torus $\mathbb{T}^{n}$.
It then leads to the desired equality\begin{equation}
\begin{split}
\int_{\mathbb{R}^n}\check{f}(\xi)\check{g}(\xi)e^{-2\pi ik\xi} d\xi
=&
\int_{Q}\check{f}(\xi)\check{g}(\xi)e^{-2\pi ik\xi} d\xi
\\
=&
\int_{Q}\sum_{j,l\in \mathbb{Z}^n}f(j)g(l)e^{2\pi i(j+l-k)\xi}d\xi
\\
=&
\sum_{j,l\in \mathbb{Z}^n}\int_{Q}f(j)g(l)e^{2\pi i(j+l-k)\xi}d\xi
\\
=&
\sum_{j+l=k}f(j)g(l).
\end{split}
\end{equation}
By the fact proved in Proposition \ref{proposition, second reduction, embedding}, we conclude
\begin{equation}
\|f\ast g\|_{L_{p,\omega}}\sim \|\{(f\ast g)(k)\}\|_{l_{p,\omega}}=\left\|\left\{\sum_{j+l=k}f(j)g(l)\right\}\right\|_{l_{p,\omega}}.
\end{equation}
Now, we use the assumption $l_{p_1, \omega_1}\ast l_{p_2, \omega_2} \subset l_{p, \omega}$ to deduce
\begin{equation}
\begin{split}
\|f\ast g\|_{L_{p, \omega}}
\sim &
\|\{f(j)\}\ast \{g(l)\}\|_{l_{p,\omega}}
\\
\lesssim &
\|\{f(k)\}\|_{l_{p_1,\omega_1}}\|\{g(k)\}\|_{l_{p_2,\omega_2}}
\\
\sim &
\|f\|_{L_{p_1,\omega_1}}\|g\|_{L_{p_2,\omega_2}}.
\end{split}
\end{equation}
The proof of sufficiency is completed.
\end{proof}

After several reductions, we have finished all the preparations for the proof of our main theorems.
We would like to remark that all the propositions shown in bilinear form in Section 2 and Section 3 can be generalized to the multi-linear form.
Therefore, Theorem \ref{characterization of product, multiple case} can be  verified by Proposition \ref{proposition, first reduction, product} and Proposition \ref{proposition, second reduction, product},
Theorem \ref{characterization of convolution, multiple case} can be verified by Proposition \ref{proposition, first reduction, convolution} and Proposition \ref{proposition, second reduction, convolution},
Theorem \ref{characterization of embedding} can be proved by Proposition \ref{proposition, first reduction, embedding} and Proposition \ref{proposition, second reduction, embedding}.

\begin{remark}
We would like to make a comparison between the known results (for instance, see \cite{Feichtinger_Banach convolution, C.Heil}) and our results.
Recalling that
\begin{equation}
\begin{split}
\|f\|_{\mathcal {M}_{p,q}^{m,\omega}}
=&
\big\|\|V_{\phi}f(x,\xi)\|_{L_{x;p,\omega}}\big\|_{L_{\xi;q,m}}
\\
=&
\big\|\|\mathscr{F}^{-1}(\hat{f}(\cdot)\hat{\phi}(\cdot-\xi))(x)\|_{L_{x;p,\omega}}\big\|_{L_{\xi;q,m}}
\\
=&
\big\|\|\hat{f}(\cdot)\hat{\phi}(\cdot-\xi))\|_{\mathscr{F}(L_{p,\omega})}\big\|_{L_{\xi;q,m}},
\end{split}
\end{equation}
we can re-express the
modulation space by the Wiener type space $W(B, C)$ as in \cite{Feichtinger_Banach convolution} by
\begin{equation}
\mathcal {M}_{p,q}^{m,\omega}=\mathscr{F}^{-1}W(\mathscr{F}L_{p,\omega}, L_{q,m}).
\end{equation}
Then the relation
\begin{equation}
M_{p_1,q_1}^{m_1,\omega_1}\cdot M_{p_2,q_2}^{m_2,\omega_2}\subset M_{p,q}^{m,\omega}
\end{equation}
can be re-expressed as
\begin{equation}
\mathscr{F}^{-1}W(\mathscr{F}L_{p_1,\omega_1}, L_{q_1,m_1})\cdot \mathscr{F}^{-1}W(\mathscr{F}L_{p_2,\omega_2}, L_{q_2,m_2})
\subset \mathscr{F}^{-1}W(\mathscr{F}L_{p,\omega}, L_{q,m})
\end{equation}
which is equivalent to
\begin{equation}\label{for proof, 5}
W(\mathscr{F}L_{p_1,\omega_1}, L_{q_1,m_1})\ast W(\mathscr{F}L_{p_2,\omega_2}, L_{q_2,m_2})
\subset W(\mathscr{F}L_{p,\omega}, L_{q,m}).
\end{equation}
Using Theorem \ref{characterization of product, multiple case}, we know that the above relation (\ref{for proof, 5}) holds if and only if
\begin{equation}
\mathscr{F}L^{\Gamma}_{p_1,\omega_1} \ast \mathscr{F}L^{\Gamma}_{p_2,\omega_2} \subset \mathscr{F}L_{p,\omega},\
L^{\Gamma}_{q_1,m_1}\ast L^{\Gamma}_{q_2,m_2}\subset L_{q,m}
\end{equation}
for some $\Gamma\subset \mathbb{R}^n$ with non-empty interior.

In \cite{Feichtinger_Banach convolution}, the determination of convolution relation
\begin{equation}
W(B_1,C_1)\ast W(B_2,C_2)\subset W(B,C)
\end{equation}
was based on the assumptions on the triplets $(B_1,B_2,B)$ and $(C_1,C_2,C)$,
which are the components of corresponding Wiener amalgam spaces.
However, at least in some cases, our results show that the convolution relation on Wiener type spaces
should be determined by the convolution relation on the local version of their components instead of the full version.
Moreover, in our theorems, one can find that the convolution relation on the local version of the components is weaker than
the convolution relation on the full version of the components. For example, we can verify that
\begin{equation}
L_{q_1,m_1}\ast L_{q_2,m_2}\subset L_{q,m}\Longrightarrow L^{\Gamma}_{q_1,m_1}\ast L^{\Gamma}_{q_2,m_2}\subset L_{q,m},
\end{equation}
but the reverse is not true.
\end{remark}

\section{Applications for more exact weights}
By the spirit of the main theorems, of course we may have many choices to put some assumptions on the index groups $p,p_i,q,q_i,s,s_i$ and on the weights $\omega,\omega_i,m,m_i$,
to assure the embedding relations, product relations, or convolution relations on modulation spaces (or on Wiener amalgam spaces).
However, we do not intend to discuss problems of this type here.
We only focus on the cases that the corresponding characterization can be expressed more concrete. These cases may be more useful for specific applications.

The main theorems established in Section 1 would allow us to reduce the relations about function spaces defined on $\mathbb{R}^n$
to the discrete group $\mathbb{Z}^n$.
Consequently, we have to deal with the corresponding problems on $\mathbb{Z}^n$.
It turns out that working to Lebesgue spaces on $\mathbb{Z}^n$
is more convenient compared to working to the origin function spaces defined on $\mathbb{R}^n$.
This approach may allow us to find sharp conditions for product inequalities, convolution inequalities, embedding relations on function spaces defined on $\mathbb{R}^n$,
and eventually helps us completing some previous studies on this topic.

First, checking the conditions (a) and (b) in Theorem \ref{characterization of product, multiple case} and Theorem \ref{characterization of convolution, multiple case},
we keep the weights on product part and abandon the weights on convolution part,
and obtain the following corollaries corresponding to
Theorem \ref{characterization of product, multiple case} and Theorem \ref{characterization of convolution, multiple case}.
We recall that $0$ means the weight function which is equal to $\langle x\rangle^0=1$ everywhere.

\begin{corollary}
Let $J\geq2$ be an integer. Suppose $1\leq p\leq \infty$, $0<p_j, q, q_j\leq \infty$, $\omega,\omega_j, m, m_j \in \mathscr{P}({R^n})$ for $j=1,2,\cdots ,J$.
Denote $1/r=\max\left\{1/p-\sum_{j=1}^J1/p_j, 0\right\}$, $1/s=\max\left\{1/q-\sum_{j=1}^J1/q_j, 0\right\}$,
$S=\{j\in \mathbb{Z}: \ p_j\geq 1, 1\leq j\leq J.  \}$, $T=\{j\in \mathbb{Z}: \ q_j\geq 1, 1\leq j\leq J.  \}$.
Then
 \begin{enumerate}
   \item
$
\prod_{j=1}^J \mathcal {M}_{p_j,q_j}^{0, \omega_j}\subset \mathcal {M}_{p,q}^{0, \omega}
$
if and only if \
$
\frac{\omega}{\prod_{j=1}^J\omega_j}\in L^r$,  $(|T|-1)+1/q\leq \sum_{j\in T}1/q_j$,  $1/q\leq 1/q_j~(j=1,2,\cdots,J).
$
   \item
$
\coprod_{j=1}^J W_{p_j,q_j}^{m_j, 0}\subset W_{p,q}^{m, 0}
$
if and only if
$
\ (|S|-1)+1/p\leq \sum_{j\in S}1/p_j$,  $1/p\leq 1/p_j~(j=1,2,\cdots, J)$,  $\frac{m}{\prod_{j=1}^J m_j}\in L^s.
$

 \end{enumerate}
\end{corollary}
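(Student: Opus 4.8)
The plan is to obtain the Corollary as a specialization of Theorem \ref{characterization of product, multiple case}, followed by a hands-on analysis of the two sequence-space conditions that survive. For part (1) we apply part (1) of Theorem \ref{characterization of product, multiple case} with $m_j=m=0$: this reduces $\prod_j\mathcal M_{p_j,q_j}^{0,\omega_j}\subset\mathcal M_{p,q}^{0,\omega}$ to the conjunction of the weighted discrete product inclusion $\prod_j l_{p_j,\omega_j}\subset l_{p,\omega}$ and the \emph{unweighted} discrete convolution inclusion $\coprod_j l_{q_j}\subset l_q$. For part (2) we instead apply part (2) of Theorem \ref{characterization of product, multiple case} with $\omega_j=\omega=0$, which reduces $\coprod_j W_{p_j,q_j}^{m_j,0}\subset W_{p,q}^{m,0}$ to $\prod_j l_{q_j,m_j}\subset l_{q,m}$ together with $\coprod_j l_{p_j}\subset l_p$ (the hypothesis $1\le p\le\infty$ is exactly what makes this theorem applicable). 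Thus it suffices to establish two lemmas about sequence spaces: one characterizing weighted discrete products, one characterizing unweighted multiple discrete convolutions.

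\emph{Weighted discrete products.} I claim $\prod_{j=1}^J l_{p_j,\omega_j}\subset l_{p,\omega}$ if and only if $W:=\omega/\prod_j\omega_j$ lies in $l_r$ with $1/r=\max\{1/p-\sum_j1/p_j,0\}$. The substitution $b_j=\omega_j a_j$ rewrites the target inequality as $\|W\prod_j b_j\|_{l_p}\lesssim\prod_j\|b_j\|_{l_{p_j}}$; feeding in the saturating choice $b_j=c^{p_0/p_j}$ with $1/p_0=\sum_j1/p_j$ (and, if $p_0=\infty$, simply $b_1=c$, $b_j\equiv1$ for $j\ge2$) shows this is equivalent to the boundedness of the pointwise multiplier $c\mapsto Wc$ from $l_{p_0}$ to $l_p$. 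The latter is classical: sufficiency follows from H\"older when $p<p_0$, and from $l_{p_0}\subset l_p$ together with $W\in l_\infty$ when $p\ge p_0$; necessity follows by testing on the unit sequences $\delta_k$ (equal to $1$ at $k$ and $0$ elsewhere) and, when $p<p_0$, a routine saturation argument; in every case the exact condition is $W\in l_r$. Finally, a quotient of polynomially $v$-moderate weights is again polynomially moderate, so $W\in\mathscr P(\mathbb R^n)$; hence $W(x)\sim W(k)$ for $x\in Q_k$ and $\|W\|_{l_r(\mathbb Z^n)}\sim\|W\|_{L^r(\mathbb R^n)}$, which turns the discrete condition into $\omega/\prod_j\omega_j\in L^r$. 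The same argument run for the triple $(q,q_j,m_j)$ yields the condition $m/\prod_j m_j\in L^s$ with $1/s=\max\{1/q-\sum_j1/q_j,0\}$ that appears in part (2).

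\emph{Unweighted multiple discrete convolutions.} I claim $\coprod_{j=1}^J l_{q_j}\subset l_q$ if and only if $(|T|-1)+1/q\le\sum_{j\in T}1/q_j$ and $1/q\le1/q_j$ for all $j$, where $T=\{j:q_j\ge1\}$. For sufficiency, separate the factors with $j\in T$ from those with $j\notin T$: each $l_{q_j}$ with $q_j<1$ is a convolution algebra contained in $l_1$, so $\coprod_{j\notin T}l_{q_j}\subset l_1$; the iterated Young inequality applied to the indices in $T$ gives $\coprod_{j\in T}l_{q_j}\subset l_{q_T}$ with $1/q_T=\sum_{j\in T}1/q_j-(|T|-1)$, which lies in $[0,1]$ — the nonnegativity being exactly the first hypothesis and all intermediate exponents being automatically $\ge1$ — and then $l_{q_T}\ast l_1\subset l_{q_T}\subset l_q$ by the first hypothesis (the case $T=\varnothing$ degenerates to $\coprod_jl_{q_j}\subset l_1\subset l_q$). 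For necessity, testing with $a_{j_0}$ arbitrary and $a_j=\delta_0$ for $j\ne j_0$ collapses the convolution to $a_{j_0}$ and forces $l_{q_{j_0}}\subset l_q$, i.e.\ $1/q\le1/q_{j_0}$; testing with $a_j=\mathbf{1}_{\{|k|_\infty\le N\}}$ for $j\in T$ and $a_j=\delta_0$ for $j\notin T$, and comparing the orders of magnitude $\|\coprod_{j\in T}\mathbf{1}_{\{|k|_\infty\le N\}}\|_{l_q}\gtrsim N^{n((|T|-1)+1/q)}$ against $\prod_{j\in T}\|\mathbf{1}_{\{|k|_\infty\le N\}}\|_{l_{q_j}}=N^{n\sum_{j\in T}1/q_j}$ as $N\to\infty$, forces $(|T|-1)+1/q\le\sum_{j\in T}1/q_j$. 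The identical statement with $(p,p_j)$ and $S=\{j:p_j\ge1\}$ supplies the $p$-side conditions in part (2).

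Assembling the two lemmas with the reductions of the first paragraph yields both assertions of the Corollary. The step I expect to be the main obstacle is the sharp necessity in the convolution lemma: one must choose test sequences that simultaneously saturate both inequalities and correctly reproduce the exponent $(|T|-1)+1/q$ generated by the $|T|$-fold self-convolution of a cube indicator, while making sure the factors with $q_j<1$ (where $l_{q_j}$ is only quasi-Banach and effectively behaves like $l_1$) neither improve nor degrade the extremal configuration — this is precisely why the set $T$, rather than all of $\{1,\dots,J\}$, enters the statement.
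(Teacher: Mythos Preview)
Your proposal is correct and follows essentially the same route as the paper: apply Theorem~\ref{characterization of product, multiple case} to reduce to the two discrete statements, then prove the weighted H\"older characterization (the paper's Lemma~\ref{lemma, sharpness of the Holder's inequality, discrete form}) and the unweighted multiple Young characterization (Lemma~\ref{lemma, sharpness of the Young's inequality, discrete form}) by the same test-sequence and embedding arguments you outline. One small caveat: your claim that ``all intermediate exponents are automatically $\ge 1$'' in the iterated Young step is only true after ordering the $q_j$ so that the $1/q_j$ are nonincreasing (or, as the paper does, after first passing to auxiliary exponents $r_j\ge q_j$ with $(|T|-1)+1/q=\sum_{j\in T}1/r_j$); with that adjustment the argument goes through exactly as you wrote.
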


\begin{corollary}
Let $J\geq2$ be an integer. Suppose $0< p,p_j,q,q_j\leq \infty$, $\omega,\omega_j, m,m_j\in \mathscr{P}({R^n})$ for $j=1,2,\cdots ,J$.
We write $1/r=\max\left\{1/p-\sum_{j=1}^J1/p_j, 0\right\}$, $1/s=\max\left\{1/q-\sum_{j=1}^J1/q_j, 0\right\}$,
$S=\{j\in \mathbb{Z}: \ p_j\geq 1, 1\leq j\leq J.  \}$, $T=\{j\in \mathbb{Z}: \ q_j\geq 1, 1\leq j\leq J.  \}$.
Then
 \begin{enumerate}
   \item
$
\coprod_{j=1}^J \mathcal {M}_{p_j,q_j}^{m_j, 0}\subset \mathcal {M}_{p,q}^{m, 0}
$
if and only if
$
(|S|-1)+1/p\leq \sum_{j\in S}1/p_j$, $1/p\leq 1/p_j~(j=1,2,\cdots, J)$, $\frac{m}{\prod_{j=1}^J m_j}\in L^s.
$
   \item
$
\prod_{j=1}^J W_{p_j,q_j}^{0, \omega_j}\subset W_{p,q}^{0, \omega}
$
if and only if
$
\frac{\omega}{\prod_{j=1}^J \omega_j}\in L^r$, $(|T|-1)+1/q\leq \sum_{j\in T}1/q_j$,  $1/q\leq 1/q_j~(j=1,2,\cdots,J).
$
 \end{enumerate}
\end{corollary}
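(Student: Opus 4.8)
The plan is to pull both statements back, via Theorem~\ref{characterization of convolution, multiple case}, to inequalities for weighted discrete Lebesgue spaces on $\mathbb{Z}^n$, and then to characterize those discrete inequalities directly. For part~(1), applying Theorem~\ref{characterization of convolution, multiple case} to the inclusion $\coprod_{j=1}^J\mathcal{M}_{p_j,q_j}^{m_j,0}\subset\mathcal{M}_{p,q}^{m,0}$ with all the spatial weights trivial shows that it holds if and only if condition~(b) there holds, i.e. $\prod_{j=1}^J l_{q_j,m_j}\subset l_{q,m}$ together with the \emph{unweighted} convolution inclusion $\coprod_{j=1}^J l_{p_j}\subset l_p$. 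Symmetrically, for part~(2), Theorem~\ref{characterization of convolution, multiple case} applied to $\prod_{j=1}^J W_{p_j,q_j}^{0,\omega_j}\subset W_{p,q}^{0,\omega}$ with all the frequency weights trivial reduces it to $\prod_{j=1}^J l_{p_j,\omega_j}\subset l_{p,\omega}$ together with $\coprod_{j=1}^J l_{q_j}\subset l_q$. So in both parts it remains to prove two facts about sequences on $\mathbb{Z}^n$: (I) $\prod_{j=1}^J l_{p_j,\omega_j}\subset l_{p,\omega}$ if and only if $\omega/\prod_{j=1}^J\omega_j\in l^r$ with $1/r=\max\{1/p-\sum_{j=1}^J 1/p_j,0\}$; and (II) $\coprod_{j=1}^J l_{p_j}\subset l_p$ if and only if $(|S|-1)+1/p\le\sum_{j\in S}1/p_j$ and $1/p\le 1/p_j$ for every $j$, with $S=\{j:p_j\ge1\}$. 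Granting (I) and (II) — applied once with the data $(p,p_j,\omega,\omega_j)$ and once with $(q,q_j,m,m_j)$ — both statements of the corollary follow at once.

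For (I) I would first absorb the weights: writing $g_j:=f_j\omega_j$ and $w:=\omega/\prod_j\omega_j$, the relation $\prod_j l_{p_j,\omega_j}\subset l_{p,\omega}$ is exactly $\|w\prod_j g_j\|_{l_p}\lesssim\prod_j\|g_j\|_{l_{p_j}}$. If $\sum_j 1/p_j\le 1/p$ this is H\"older's inequality with the extra exponent $r$ given by $1/r=1/p-\sum_j 1/p_j\ge0$; if $\sum_j 1/p_j>1/p$ I would use the nesting $l_{p_j}\hookrightarrow l_{\tilde p_j}$ (valid on $\mathbb{Z}^n$ whenever $\tilde p_j\ge p_j$) to lower each $1/p_j$ to some $1/\tilde p_j$ with $\sum_j 1/\tilde p_j=1/p$ and then apply H\"older with $w\in l^\infty$. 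For the converse, testing with $g_j$ a suitable power of $|w|$ restricted to a finite set (together with point masses) and letting the set exhaust $\mathbb{Z}^n$ forces $w\in l^r$. Finally, since each $\omega_j$, hence $w$, is comparable to a constant on every unit cube $Q_k$ by $\mathscr{P}$-moderateness, one has $\|w\|_{l^r(\mathbb{Z}^n)}\sim\|w\|_{L^r(\mathbb{R}^n)}$, which is why the condition may be written with $L^r$ (resp. $L^s$) as in the statement.

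For (II) the sufficiency I would obtain by splitting the factors according to $S$: embed $l_{p_j}\hookrightarrow l_1$ for $j\notin S$ and convolve those factors inside the convolution algebra $l_1$; for $j\in S$ use the classical Young inequality on $\mathbb{Z}^n$ to get $\coprod_{j\in S}l_{p_j}\subset l_\rho$ with $1/\rho=\max\{\sum_{j\in S}1/p_j-(|S|-1),0\}$; convolve the two pieces using $l_1\ast l_\rho\subset l_\rho$; and finally embed $l_\rho\hookrightarrow l_p$, which is allowed precisely because $(|S|-1)+1/p\le\sum_{j\in S}1/p_j$ (and the crude embedding steps are what demand $1/p\le 1/p_j$). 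For necessity, testing against $f_{j_0}$ arbitrary and $f_j=\delta_0$ for $j\ne j_0$ gives $l_{p_{j_0}}\hookrightarrow l_p$, i.e. $1/p\le 1/p_{j_0}$; and testing against $f_j=\chi_{\{1,\dots,N\}^n}$ for $j\in S$, $f_j=\delta_0$ for $j\notin S$, and letting $N\to\infty$, gives $N^{n((|S|-1)+1/p)}\lesssim\|\coprod_j f_j\|_{l_p}\lesssim N^{n\sum_{j\in S}1/p_j}$, which is the remaining inequality.

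The passage from Theorem~\ref{characterization of convolution, multiple case} and the proof of (I) are routine; the genuine difficulty is the sharpness half of (II) — explaining why the indices outside $S$ play no role and why the threshold is exactly $(|S|-1)+1/p\le\sum_{j\in S}1/p_j$. This hinges on a careful analysis of the $|S|$-fold convolution of a cube indicator (which behaves like a discrete B-spline of height $\sim N^{n(|S|-1)}$ supported on $\sim N^n$ points) and on verifying that no other configuration of test sequences yields a stronger constraint; everything else — the H\"older/nesting interpolation and the $l^r\sim L^r$ comparison — is standard.
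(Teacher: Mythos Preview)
Your proposal is correct and follows essentially the same route as the paper: reduce via Theorem~\ref{characterization of convolution, multiple case} to the two discrete characterizations, which are precisely the paper's Lemma~\ref{lemma, sharpness of the Holder's inequality, discrete form} (your (I)) and Lemma~\ref{lemma, sharpness of the Young's inequality, discrete form} (your (II)), and your sketches of their proofs---H\"older plus nesting for (I), Young plus $l_{p_j}\hookrightarrow l_1$ embeddings together with the $\delta_0$ and cube tests for (II)---match the paper's arguments. Your observation that the $l^r$ condition transfers to $L^r$ via $\mathscr{P}$-moderateness is exactly how the paper passes between the discrete and continuous formulations.
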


Also, we have the following corollary from Theorem \ref{characterization of embedding}.

\begin{corollary}
Suppose $0< p_1,p_2,q_1,q_2\leq \infty$, $\omega_j, m_j\in \mathscr{P}({R^n})$ for $j=1,2$.
Denote $1/r=\max\left\{1/p_2-1/p_1, 0\right\}$, $1/s=\max\left\{1/q_2-1/q_1, 0\right\}$.
Then
 \begin{enumerate}
   \item
$
\mathcal {M}_{p_1,q_1}^{m_1,\omega_1}\subset \mathcal {M}_{p_2,q_2}^{m_2,\omega_2}
$
if and only if
$
\omega_2/\omega_1\in L^r,\ m_2/m_1\in L^s.
$
   \item
$
W_{p_1,q_1}^{m_1,\omega_1}\subset W_{p_2,q_2}^{m_2,\omega_2}
$
if and only if
$
\omega_2/\omega_1\in L^r,\ m_2/m_1\in L^s.
$
 \end{enumerate}
\end{corollary}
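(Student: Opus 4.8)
The plan is to combine Theorem~\ref{characterization of embedding} with an elementary sharp criterion for weighted embeddings of discrete Lebesgue spaces, followed by the transfer from $l^\theta$ to $L^\theta$ for moderate weights. By Theorem~\ref{characterization of embedding}(1), the embedding $\mathcal {M}_{p_1,q_1}^{m_1,\omega_1}\subset \mathcal {M}_{p_2,q_2}^{m_2,\omega_2}$ holds if and only if both discrete embeddings $l_{p_1,\omega_1}\subset l_{p_2,\omega_2}$ and $l_{q_1,m_1}\subset l_{q_2,m_2}$ hold; by Theorem~\ref{characterization of embedding}(2) the Wiener amalgam embedding in part (2) of the corollary reduces to the very same two conditions. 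Hence it suffices to prove: for weights $u,w\in\mathscr{P}(\mathbb{R}^n)$ and exponents $0<a,b\le\infty$, one has $l_{a,u}\subset l_{b,w}$ if and only if $w/u\in L^{\theta}$, where $1/\theta:=\max\{1/b-1/a,0\}$; applying this with $(a,b,u,w)=(p_1,p_2,\omega_1,\omega_2)$ and with $(q_1,q_2,m_1,m_2)$ then gives exactly $\omega_2/\omega_1\in L^r$ and $m_2/m_1\in L^s$.

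To prove the criterion, note first that a nonzero weight in $\mathscr{P}(\mathbb{R}^n)$ is automatically strictly positive, so via the substitution $g=fu$ the embedding $l_{a,u}\subset l_{b,w}$ is equivalent to the boundedness of the pointwise multiplier $\eta:=w/u$ from $l_a$ into $l_b$. When $a\le b$ we have $l_a\subset l_b$, and this multiplier is bounded if and only if $\eta\in l^\infty$: sufficiency is $\|\eta g\|_{l_b}\le\|\eta\|_{l^\infty}\|g\|_{l_b}\le\|\eta\|_{l^\infty}\|g\|_{l_a}$, and necessity follows by testing on the indicator of a single point; this matches $\theta=\infty$. When $a>b$, put $1/\theta=1/b-1/a>0$; H\"older's inequality gives $\|\eta g\|_{l_b}\le\|\eta\|_{l^\theta}\|g\|_{l_a}$. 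For the converse, test the multiplier bound on the truncated sequences $g=|\eta|^{\theta/a}\chi_F$ with $F\subset\mathbb{Z}^n$ finite: since $(\theta/a+1)b=\theta=(\theta/a)a$ by the defining identity $1/\theta=1/b-1/a$, both $\|\eta g\|_{l_b}$ and $\|g\|_{l_a}$ become powers of $\sum_{k\in F}|\eta(k)|^\theta$, and the resulting inequality forces $\big(\sum_{k\in F}|\eta(k)|^\theta\big)^{1/\theta}\lesssim 1$ uniformly in $F$, i.e.\ $\eta\in l^\theta$. The endpoint cases $a=\infty$ or $b=\infty$ are covered by the same computations with the obvious interpretations of the test sequences.

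It remains to replace the sequence condition $w/u\in l^\theta$ by the integral condition $w/u\in L^\theta$. Because $u$ and $w$ are moderate with respect to polynomial weights, each is comparable, with constants independent of $k$, to its value at the center of the unit cube $Q_k$; consequently
\[
\|w/u\|_{L^\theta(\mathbb{R}^n)}^\theta=\sum_{k\in\mathbb{Z}^n}\int_{Q_k}\Big|\frac{w(x)}{u(x)}\Big|^\theta dx\sim\sum_{k\in\mathbb{Z}^n}\Big|\frac{w(k)}{u(k)}\Big|^\theta=\|w/u\|_{l^\theta}^\theta,
\]
with the analogous $\sup$-comparison when $\theta=\infty$. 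Combining this equivalence with the criterion and with Theorem~\ref{characterization of embedding} completes the proof of both parts of the corollary.

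The only step requiring any care is the necessity half of the discrete criterion in the range $a>b$; everything else is bookkeeping. The choice of the extremal sequences $|\eta|^{\theta/a}\chi_F$ is forced: it is precisely the choice that equalizes the exponents $(\theta/a+1)b$ and $(\theta/a)a$, which is possible exactly because $1/\theta=1/b-1/a$.
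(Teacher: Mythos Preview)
Your proof is correct and follows essentially the same route as the paper: reduce via Theorem~\ref{characterization of embedding} to the discrete embeddings, then characterize $l_{a,u}\subset l_{b,w}$ by the H\"older-type criterion (this is the $J=1$ case of the paper's Lemma~\ref{lemma, sharpness of the Holder's inequality, discrete form}), and finally pass from $l^\theta$ to $L^\theta$ using moderateness. Your substitution $g=fu$ and choice of extremizer $|\eta|^{\theta/a}\chi_F$ are a mild repackaging of the paper's test sequences, but the argument is the same in substance.
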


We reduce the proofs of above three corollaries to the following two lemmas.

\begin{lemma}[Sharpness of H\"{o}lder's inequality, discrete form] \label{lemma, sharpness of the Holder's inequality, discrete form}
Let $J\geq 1$ be an integer. Suppose $0<q,q_j \leq \infty$, $m, m_j\in \mathscr{P}(\mathbb{R}^n)$ for $j=1,2,\cdots ,J$. Then
\begin{equation}\label{for proof, 3}
\prod_{j=1}^Jl_{q_j, m_j}\subset l_{q, m}
\end{equation}
holds if and only if
\begin{equation}
\frac{m}{\prod_{j=1}^J m_j}\in L^r,
\end{equation}
where $1/r=\max\left\{1/q-\sum_{j=1}^J1/q_j, 0\right\}$.
\end{lemma}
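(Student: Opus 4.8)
The plan is to reduce to the scalar (single‑index) Hölder/Young estimate on weighted sequence spaces, exploiting the separation structure of the weights. First I would treat the direction ``$\frac{m}{\prod_j m_j}\in L^r\Rightarrow$ (\ref{for proof, 3})''. Writing each $f_j = g_j/m_j$ with $g_j\in l_{q_j}$ (so $\|f_j\|_{l_{q_j,m_j}}=\|g_j\|_{l_{q_j}}$) and $\prod_j f_j = \big(\prod_j g_j\big)\cdot\big(\prod_j m_j\big)^{-1}$, the claim becomes
\begin{equation}
\Big\|\frac{m}{\prod_j m_j}\prod_j g_j\Big\|_{l_q}\lesssim \prod_j\|g_j\|_{l_{q_j}},
\end{equation}
which by Hölder's inequality (with the exponent identity $\frac1r=\frac1q-\sum_j\frac1{q_j}$ when this is nonnegative, and the trivial embedding $l_{q_j}\subset l_{q_j'}$ absorbing any slack when $\sum_j 1/q_j>1/q$) follows from $\frac{m}{\prod_j m_j}\in l_r$. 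Here I would use that for a weight in $\mathscr{P}(\mathbb{R}^n)$ the discrete $l_r$ membership and the (stated) $L^r$ membership are equivalent, since such weights are comparable to a constant on each unit cube $Q_k$; strictly, membership of $\frac{m}{\prod_j m_j}$ in $L^r(\mathbb{R}^n)$ is equivalent to summability of $\big(\frac{m(k)}{\prod_j m_j(k)}\big)^r$ over $k\in\mathbb{Z}^n$. The case $r=\infty$ (i.e. $\sum_j 1/q_j\ge 1/q$) is handled the same way with $\sup$ in place of the sum.

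For the converse, I would test (\ref{for proof, 3}) on well‑chosen sequences. Fix a large finite box $B_N=\{k:|k|_\infty\le N\}$ and, for each $j$, take $f_j$ supported on $B_N$ with $f_j(k)=m_j(k)^{-1}\lambda_j(k)$ where the $\lambda_j$ are chosen so that $\|f_j\|_{l_{q_j,m_j}}=\|\lambda_j\|_{l_{q_j}}$ is controlled while $\prod_j f_j$ is as large as possible in $l_{q,m}$; concretely one puts all the mass on a single point $k_0$ to extract the pointwise bound $\frac{m(k_0)}{\prod_j m_j(k_0)}\lesssim 1$, giving $\frac{m}{\prod_j m_j}\in L^\infty$, and then — in the regime $\frac1q-\sum_j\frac1{q_j}>0$ — spreads the mass as $\lambda_j(k)=\mathbf{1}_{B_N}(k)$ (so $\|\lambda_j\|_{l_{q_j}}\sim N^{n/q_j}$) to obtain, from the assumed inequality, $\big\|\frac{m}{\prod_j m_j}\big\|_{l_r(B_N)}\lesssim \prod_j N^{n/q_j}\cdot$(the $l_q$‑normalisation), and letting $N\to\infty$ yields $\frac{m}{\prod_j m_j}\in l_r=L^r$. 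One must check the bookkeeping of exponents so that the two test families together give exactly the threshold $1/r=\max\{1/q-\sum_j 1/q_j,0\}$ and no more.

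The main obstacle I anticipate is not any single estimate but the careful handling of the two endpoint regimes simultaneously: when $\sum_j 1/q_j> 1/q$ one only gets the pointwise ($L^\infty$) condition and must argue that no decay of $\frac{m}{\prod_j m_j}$ is forced, whereas when $\sum_j 1/q_j\le 1/q$ one needs the sharp $l_r$ test, and the transition has to be shown to match $1/r=\max\{\cdots,0\}$ exactly. A secondary, more routine point is justifying the reduction from $L^r(\mathbb{R}^n)$ to $l_r(\mathbb{Z}^n)$ for $\mathscr{P}$‑weights and making sure the constants in Hölder's inequality for $0<q<1$ (where $l_q$ is only a quasi‑norm) are harmless — both follow from standard facts but should be stated. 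The case $J=1$ is then just the familiar statement $l_{q_1,m_1}\subset l_{q,m}\iff m/m_1\in L^r$, recovering Proposition~\ref{proposition, second reduction, embedding}'s discrete side.
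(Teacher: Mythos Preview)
Your sufficiency argument and your single-point test for the $L^\infty$ condition are correct and match the paper. The gap is in your necessity argument for the regime $\frac1q>\sum_j\frac1{q_j}$: the test $\lambda_j=\mathbf{1}_{B_N}$ does \emph{not} deliver the $l_r$ condition. With that choice one has $\prod_j f_j(k)=\big(\prod_j m_j(k)\big)^{-1}\mathbf{1}_{B_N}(k)$, so the assumed inequality yields only
\[
\Big\|\tfrac{m}{\prod_j m_j}\Big\|_{l_q(B_N)}\lesssim N^{\,n\sum_j 1/q_j}=N^{\,n(1/q-1/r)},
\]
an $l_q$-growth bound, not an $l_r$ bound. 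This is strictly weaker: for instance in dimension $n=1$ with $q=2$, $\sum_j 1/q_j=1/4$ (so $r=4$), the weight $w(k)=k^{-1/4}$ satisfies $\|w\|_{l_2(\{1,\dots,N\})}\sim N^{1/4}=N^{1/q-1/r}$ yet $w\notin l_4$. So indicator tests cannot close the argument, and this is not just ``bookkeeping of exponents''.

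The remedy (and the paper's approach) is to test with the H\"older-extremizing sequences
\[
a_{j}(i)=\frac{1}{m_j(i)}\Big(\frac{m(i)}{\prod_{\ell} m_\ell(i)}\Big)^{r/q_j}\mathbf{1}_{B_N}(i),
\]
for which $\|a_j\|_{l_{q_j,m_j}}=\|w\|_{l_r(B_N)}^{r/q_j}$ and $\|\prod_j a_j\|_{l_{q,m}}=\|w\|_{l_r(B_N)}^{r/q}$ with $w=m/\prod_j m_j$; plugging into the assumed inequality and using $\frac1q-\sum_j\frac1{q_j}=\frac1r$ gives $\|w\|_{l_r(B_N)}^{1/r}\lesssim 1$ uniformly in $N$, hence $w\in l_r$. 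Your reduction $L^r\Leftrightarrow l_r$ via the local comparability of $\mathscr{P}$-weights is fine and is exactly how the paper passes from $l_r$ to $L^r$.
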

\begin{proof}
We first show the necessity of this lemma.
We use $\{\vec{e_j}\}$ to denote the standard orthogonal basis on $\mathbb{Z}^n$, i.e., $\vec{e_j}=\{e_{j,i}\}_{i\in \mathbb{Z}^n}$, $e_{j,i}=1$ for $i=j$, and vanish elsewhere.
For $j\in \mathbb{Z}, 1\leq j\leq J$, we use $\vec{a_j}=\{a_{j,i}\}_{i\in \mathbb{Z}^n}$ to denote a sequence (function) defined on $\mathbb{Z}^n$.

For the case $1/q\leq \sum_{j=1}^J1/q_j$, we take $\vec{a_j}=\vec{e_k}$ for all $1\leq j\leq J$.
Then the inequality
\begin{equation}\label{for proof, 6}
\|\prod_{j=1}^J \vec{a_j}\|_{l_{q,m}}\lesssim  \prod_{j=1}^J \|\vec{a_j}\|_{l_{q_j,m_j}}
\end{equation}
implies that
\begin{equation}
m(k)\lesssim \prod_{j=1}^J m_j(k).
\end{equation}
By the arbitrariness of $k$, we obtain that
\begin{equation}
\frac{m(k)}{\prod_{j=1}^J m_j(k)}\lesssim 1
\end{equation}
for all $k\in \mathbb{Z}^n$,
then we get the conclusion by the fact that $m(\xi)\sim m(k)$ for $\xi\in Q_k$.

For the case $1/q> \sum_{j=1}^J1/q_j$, we take
\begin{equation}
\vec{a_j}=\sum_{|i|\leq N}\frac{1}{m_j(i)}\left(\frac{m(i)}{\prod_{j=1}^Jm_j(i)}\right)^{r/q_j}\vec{e_i}.
\end{equation}

%\begin{equation}
%a_{j,i}=
%\begin{cases}
%\frac{1}{m_j(i)}\left(\frac{m(i)}{\prod_{j=1}^Jm_j(i)}\right)^{r/q_j}, & \text{for}~|i|\leq N, \\
%0, & \text{otherwise}.
%\end{cases}
%\end{equation}

%\begin{equation}
%\prod_{j=1}^Ja_{j,i}=
%\begin{cases}
%\frac{1}{m(i)}\left(\frac{m(i)}{\prod_{j=1}^Jm_j(i)}\right)^{r/q}, & \text{for}~|i|\leq N, \\
%0, & \text{otherwise}.
%\end{cases}
%\end{equation}
A direct calculation then yields
\begin{equation}
\prod_{j=1}^J\vec{a_j}=\sum_{|i|\leq N}\frac{1}{m(i)}\left(\frac{m(i)}{\prod_{j=1}^Jm_j(i)}\right)^{r/q}\vec{e_i}.
\end{equation}

By the inequality (\ref{for proof, 6}), we obtain that
\begin{equation}
\left(\sum_{|i|\leq N}\left(\frac{m(i)}{\prod_{j=1}^Jm_j(i)}\right)^r\right)^{1/q}
\lesssim
\prod_{j=1}^J \left( \left(\sum_{|i|\leq N}\left(\frac{m(i)}{\prod_{j=1}^Jm_j(i)}\right)^r\right)^{1/q_j}\right)
\end{equation}
which implies
\begin{equation}
\left(\sum_{|i|\leq N}\left(\frac{m(i)}{\prod_{j=1}^Jm_j(i)}\right)^r\right)^{1/r}
\lesssim
1.
\end{equation}
Let $N\rightarrow \infty$, we deduce that $\frac{m}{\prod_{j=1}^J m_j}\in l^r$.
Then the final conclusion will be verified by the fact that $m(\xi)\sim m(k)$ for $\xi\in Q_k$.

For the sufficiency part, (\ref{for proof, 3}) can be verified by a direct application of multi-H\"{o}lder's inequality.
\end{proof}

\begin{lemma}[Sharpness of Young's inequality, discrete form] \label{lemma, sharpness of the Young's inequality, discrete form}
Let $J\geq2$ be an integer. Suppose $0<q,q_j \leq \infty$ for $j=1,2,\cdots ,J$. We write $S=\{j\in \mathbb{Z}: \ q_j\geq 1, 1\leq j\leq J\}$.
Then
\begin{equation}\label{for proof, 4}
\coprod_{j=1}^Jl_{q_j}\subset l_{q}
\end{equation}
holds if and only if
\begin{equation}
\begin{cases}
(|S|-1)+1/q\leq \sum_{j\in S} 1/{q_j},
\\
1/q\leq 1/q_j~(j=1,2\cdots J).
\end{cases}
\end{equation}
\end{lemma}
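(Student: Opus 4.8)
The plan is to treat the sufficiency and necessity separately, viewing the problem as the discrete analogue of the classical Young convolution inequality on $\mathbb{R}^n$, with the extra subtlety that exponents below $1$ are allowed.

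\emph{Sufficiency.} Assume the two conditions $(|S|-1)+1/q\leq \sum_{j\in S}1/q_j$ and $1/q\leq 1/q_j$ for all $j$. First, by monotonicity of $l^p$ norms on $\mathbb{Z}^n$ (i.e.\ $l_{q_j}\subset l_{q'_j}$ whenever $q_j\le q'_j$), I can replace each $q_j$ with $j\notin S$ by $q_j=1$ without weakening the hypothesis, so it suffices to treat the case $q_j\ge 1$ for all $j$, i.e.\ $S=\{1,\dots,J\}$, and the conditions become $(J-1)+1/q\le\sum_{j=1}^J 1/q_j$ and $q\ge q_j$. If in addition $q\ge 1$, this is exactly the hypothesis of the classical multilinear Young inequality $\coprod_{j=1}^J l_{q_j}\subset l_q$, which holds on any locally compact group, in particular on $\mathbb{Z}^n$; one can cite this or give the short proof by iterating the bilinear Young inequality $l_{a}\ast l_{b}\subset l_{c}$, $1+1/c=1/a+1/b$, together with the embedding $l_a\subset l_{a'}$. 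The remaining possibility is $q<1$ but $q\ge q_j\ge 1$ for each $j$, forcing $q=q_j=1$ for all $j$ and $J-1+1\le J$, i.e.\ $J\le J$; then $\coprod l_1\subset l_1$ is the trivial $\ell^1$ convolution estimate.

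\emph{Necessity.} Here I test \eqref{for proof, 4} against explicit sequences. To obtain $1/q\le 1/q_j$: fix an index $j_0$, take $\vec{a}_{j_0}$ to be an arbitrary finitely supported sequence and $\vec{a}_j=\vec{e}_0$ (the unit mass at the origin) for $j\neq j_0$; then $\coprod_j\vec{a}_j=\vec{a}_{j_0}$ up to translation, so $\|\vec{a}_{j_0}\|_{l_q}\lesssim\|\vec{a}_{j_0}\|_{l_{q_{j_0}}}$ for all such sequences, which is the embedding $l_{q_{j_0}}\subset l_q$, forcing $q\ge q_{j_0}$. To obtain the first inequality, take each $\vec{a}_j$ (for $j\in S$) to be the indicator of a large cube $\{|i|_\infty\le N\}$ and, say, $\vec{a}_j=\vec{e}_0$ for $j\notin S$; the $J$-fold convolution is then comparable to $N^{\#(\text{active indices})\cdot n}$ on a cube of sidelength $\sim N$, and comparing the $l_q$ norm of the output (which is $\gtrsim N^{(|S|)n}\cdot N^{n/q}$, after dividing by the trivial convolutions) against the product of the $l_{q_j}$ norms ($\sim N^{n/q_j}$ for $j\in S$, $\sim 1$ for $j\notin S$) and letting $N\to\infty$ yields $|S|+1/q\le 1+\sum_{j\in S}1/q_j$, i.e.\ the stated bound. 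The bookkeeping of exponents in this scaling argument is the one place requiring care, since the convolution of several cube-indicators is not itself an indicator but is pinched between constant multiples of indicators of comparable cubes, so one must track only the order of magnitude.

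The main obstacle I anticipate is not any single estimate but the interplay with the quasi-Banach range $q_j<1$: embeddings like $l_{q_j}\subset l_1$ must be used to push the ``bad'' indices up to $1$ before invoking classical Young, and one must check that the two stated conditions are genuinely \emph{invariant} under this reduction (they are, precisely because indices $j\notin S$ drop out of both $\sum_{j\in S}1/q_j$ and $|S|$), and that no information is lost. Once that reduction is in place, sufficiency is classical and necessity is the scaling computation above.
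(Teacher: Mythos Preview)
Your necessity argument and the $q\ge 1$ branch of sufficiency follow the paper's route: test with $\vec e_0$'s and cube indicators for necessity, and for sufficiency embed $l_{q_j}\subset l_1$ for $j\notin S$ and then invoke classical multilinear Young together with $\ell^p$-monotonicity. One bookkeeping slip in the necessity scaling: the $|S|$-fold convolution of indicators of $\{|i|_\infty\le N\}$ has height $\sim N^{(|S|-1)n}$ on a cube of sidelength $\sim N$, not $N^{|S|n}$; your displayed exponents are internally inconsistent, though you land on the correct inequality $|S|-1+1/q\le\sum_{j\in S}1/q_j$.

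The real gap is the sufficiency when $q<1$. After your reduction every exponent satisfies $q'_j\ge 1$, so the inequality $q\ge q'_j$ that you would need fails outright; the sentence ``$q<1$ but $q\ge q_j\ge 1$, forcing $q=q_j=1$'' is a contradiction, not a reduction. Concretely, once you have replaced all sub-unit $q_j$ by $1$ your target becomes $\coprod_j l_1\subset l_q$, which is \emph{false} for $q<1$, so the embedding step has discarded essential information. You can repair this in two ways. Either argue that the stated hypotheses already exclude $q<1$: if $q<1$ then $q_j\le q<1$ for every $j$ forces $S=\emptyset$, and the first condition reads $-1+1/q\le 0$, i.e.\ $q\ge 1$, so the case is vacuous. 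Or do what the paper does, which is more robust and uses only the second hypothesis: for $q\le 1$ the map $t\mapsto t^q$ is subadditive on $[0,\infty)$, giving the pointwise bound $\bigl(\coprod_j \vec a_j\bigr)^q\le \coprod_j(\vec a_j)^q$; applying the $\ell^1$ Young inequality to the sequences $(\vec a_j)^q\in l_1$ (since $l_{q_j}\subset l_q$) then yields $\|\coprod_j\vec a_j\|_{l_q}^q\le\prod_j\|(\vec a_j)^q\|_{l_1}=\prod_j\|\vec a_j\|_{l_q}^q\le\prod_j\|\vec a_j\|_{l_{q_j}}^q$.
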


\begin{proof}
We divide the proof into two parts.\\
$\mathbf{Necessity.}$ We first verify
\begin{equation}\label{for proof, 7}
(|J|-1)+1/q\leq \sum_{j=1}^J 1/{q_j}.
\end{equation}
To this end, for a positive integer $N$, we take
\begin{equation}
\vec{a_j}=\sum_{|i|\leq 2N}\vec{e_i},\ j=1,2,\cdots,J,
\end{equation}
where $\{\vec{e_i}\}_{i\in \mathbb{Z}^n}$ is the standard orthogonal basis on $\mathbb{Z}^n$.
Observing that
\begin{equation}
(\sum_{|i|\leq N}\vec{e_i})\ast (\sum_{|i|\leq 2N}\vec{e_i})
\geq
N^n\sum_{|i|\leq N}\vec{e_i},
\end{equation}
we obtain
\begin{equation}
\coprod_{j=1}^J\vec{a_j} \geq N^{n(|J|-1)}\sum_{|i|\leq N}\vec{e_i}
\end{equation}
by induction.
By the above estimates, we deduce
\begin{equation}
\|\coprod_{j=1}^J\vec{a_j}\|_{l_{q}}
\gtrsim
N^{n(|J|-1+1/q)},
\end{equation}
and
\begin{equation}
\|\vec{a_j}\|_{l_{q_j}}
\sim
N^{n/q_j}.
\end{equation}
By the assumption, we have
\begin{equation}\label{for proof, 8}
\|\coprod_{j=1}^N\vec{a_j}\|_{l_q}\lesssim \prod_{j=1}^N\|\vec{a_j}\|_{l_{q_j}},
\end{equation}
it follows that
\begin{equation}
N^{n(|J|-1+1/q)}\lesssim \prod_{j=1}^J N^{n/q_j}=N^{n(\sum_{j=1}^N1/q_j)}.
\end{equation}
Letting $N\rightarrow \infty$, we obtain (\ref{for proof, 7}).

Secondly, we use some reduction to complete the proof of necessity.
Observe that $\vec{e_0}$ is a unit of convolution on $\mathbb{Z}^n$. More precisely, we have
\begin{equation}
\vec{e_0}\ast \vec{a}=\vec{a}
\end{equation}
for any $\vec{a}$ defined on $\mathbb{Z}^n$.
Then we take $\vec{a_j}=\vec{e_0}$ for $1\leq j\neq i\leq J$, and use (\ref{for proof, 8}) to deduce
\begin{equation}
    \|\vec{a_i}\|_{l_q}\lesssim \|\vec{a_i}\|_{l_{q_i}},
\end{equation}
that is, $l_{q_i}\subset l_q$, which implies $1/q\leq 1/q_i$ by (\ref{for proof, 7}).
By the arbitrary of $i$, we obtain $1/q\leq 1/q_j$ for all $j=1,2\cdots J$.

On the other hand, we take
$\vec{a_j}=\vec{e_0}$ for $j \notin S$, and use (\ref{for proof, 8}) to deduce
\begin{equation}
\|\coprod_{j\in S}\vec{a_j}\|_{l_q}\lesssim \prod_{j\in S}\|\vec{a_j}\|_{l_{q_j}},
\end{equation}
then $(|S|-1)+1/q\leq \sum_{j\in S} 1/{q_j}$ follows by (\ref{for proof, 7}).
Here, we omit the trivial case $S=\emptyset$.
\\
$\mathbf{Sufficiency.}$
The sufficiency for $q\leq 1$ is a direct conclusion of
$$
l_{q_j}\subset l_q\subset l_1, \ \text{and}\  \coprod_{j=1}^Jl_{1}\subset l_{1}.
$$
By the embedding relation $l_q\subset l_1$, we verify that
\begin{equation}
(\coprod_{j=1}^J \vec{a_j})^q\leq \coprod_{j=1}^J (\vec{a_j})^q
\end{equation}
Taking $l_1$-norm of both sides of the inequality, we use classical Young's inequality $\coprod_{j=1}^Jl_{1}\subset l_{1}$ and the embedding relation $l_{q_j}\subset l_q$
to deduce
\begin{equation}
\begin{split}
\|\coprod_{j=1}^J \vec{a_j}\|^q_{l_q}
\lesssim
\|\coprod_{j=1}^J (\vec{a_j})^q\|_{l_1}
\lesssim
\prod_{j=1}^J\|(\vec{a_j})^q\|_{l_1}
=
\prod_{j=1}^J\|\vec{a_j}\|_{l_q}^q
\lesssim
\prod_{j=1}^J\|\vec{a_j}\|_{l_{q_j}}^q,
\end{split}
\end{equation}
which is the desired conclusion.

For the case $q\geq 1$, using the assumptions, we can choose a sequence $\{r_j\}_{j\in S}$ such that $1/r_j\leq 1/q_j$ for $j\in S$,
\begin{equation}
(|S|-1)+1/q= \sum_{j\in S} 1/{r_j},\ q\geq 1,\ r_j\geq 1, j\in S.
\end{equation}
Then we choose $r_j=1$ for $j\notin S$, and deduce
\begin{equation}
(J-1)+1/q= \sum_{j=1}^J 1/{r_j},\ q\geq 1,\ r_j\geq 1, j=1,\cdots, J.
\end{equation}

The classical Young's inequality on $\mathbb{Z}^n$ then implies
$
\coprod_{j=1}^J l_{r_j}\subset l_{q}.
$
Using the embedding relations $l_{q_j}\subset l_{r_j}\ (j=1,\cdots, J)$, we obtain the desired conclusion
\begin{equation}
\coprod_{j=1}^J l_{q_j}\subset l_{q}.
\end{equation}
\end{proof}

Finally, we handle a more special case, i.e., the power weight case.
Since the assumptions of weights in our main theorems in Section 1 are qualitative,
it is hard to give more concrete characterizations under such assumptions (especially for the convolution part).
However, power weight is a more quantitative weight and the relations on modulation spaces and Wiener amalgam spaces
with power weights can be characterized by a more concrete way.

We recall some results obtained in our paper \cite{Guo_SCI China} and \cite{Guo_Sharp convolution}.

\begin{lemma}[See \cite{Guo_SCI China}]\label{lemma, sharpness of weighted Holder's inequality}
Suppose $0<q,q_1,q_2\leq \infty$, $s,s_1,s_2 \in \mathbb{R}$.
Then
\begin{equation}
l_{q_1, s_1}\cdot l_{q_2, s_2} \subset l_{q, s}
\end{equation}
if and only if $(\mathbf{q},\mathbf{s})=(q, q_1, q_2, s, s_1, s_2)$ satisfies one of the following conditions $\mathcal {B}_i$, $i=1,2$.
\begin{eqnarray}
&&(\mathcal {B}_1)\begin{cases}
\frac{1}{q}>\frac{1}{q_1}+\frac{1}{q_2}
\\
\frac{1}{q}+\frac{s}{n}<\frac{1}{q_1}+\frac{s_1}{n}+\frac{1}{q_2}+\frac{s_2}{n};
\end{cases}
\\
&&(\mathcal {B}_2)\begin{cases}
\frac{1}{q}\leq \frac{1}{q_1}+\frac{1}{q_2}
\\
s\leq s_1+s_2.
\end{cases}
\end{eqnarray}
\end{lemma}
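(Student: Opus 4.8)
The plan is to remove the weights by a pointwise exponent substitution, which turns the weighted product embedding into a pure multiplier problem between \emph{unweighted} sequence spaces, and then to read off $\mathcal{B}_1$ and $\mathcal{B}_2$ from the elementary theory of power multipliers on $l^p(\mathbb{Z}^n)$. Concretely, since all norms in play depend only on the moduli of the entries, I would work with nonnegative $\vec{a},\vec{b}$. Set $\tau:=s_1+s_2-s$ and substitute $\alpha_k:=a_k\langle k\rangle^{s_1}$, $\beta_k:=b_k\langle k\rangle^{s_2}$; then $\|\vec{a}\|_{l_{q_1,s_1}}=\|\vec{\alpha}\|_{l_{q_1}}$, $\|\vec{b}\|_{l_{q_2,s_2}}=\|\vec{\beta}\|_{l_{q_2}}$ and $\|\vec{a}\cdot\vec{b}\|_{l_{q,s}}=\|\langle\cdot\rangle^{-\tau}\,\vec{\alpha}\cdot\vec{\beta}\|_{l_q}$, so $l_{q_1,s_1}\cdot l_{q_2,s_2}\subset l_{q,s}$ is equivalent to $\|\langle\cdot\rangle^{-\tau}\,\vec{\alpha}\cdot\vec{\beta}\|_{l_q}\lesssim\|\vec{\alpha}\|_{l_{q_1}}\|\vec{\beta}\|_{l_{q_2}}$ for all $\vec{\alpha},\vec{\beta}\ge0$. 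The key point is the exact factorization: with $1/r:=1/q_1+1/q_2$, the set $\{\vec{\alpha}\cdot\vec{\beta}:\ \|\vec{\alpha}\|_{l_{q_1}}\le1,\ \|\vec{\beta}\|_{l_{q_2}}\le1\}$ is precisely the nonnegative part of the closed unit ball of $l_r$ (the inclusion $\subset$ is H\"older's inequality, and $\supset$ comes from writing a nonnegative $\vec{\gamma}\in l_r$ as $\vec{\gamma}=\vec{\gamma}^{\,r/q_1}\cdot\vec{\gamma}^{\,r/q_2}$, with the obvious modification when $q_1$ or $q_2$ equals $\infty$). Hence the inequality above is equivalent to the boundedness of the pointwise multiplication operator $M:\vec{\gamma}\mapsto\langle\cdot\rangle^{-\tau}\vec{\gamma}$ from $l_r$ to $l_q$.

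The second ingredient is the standard dichotomy for a nonnegative weight $w$ on $\mathbb{Z}^n$: if $q\ge r$, then $\vec{\gamma}\mapsto w\vec{\gamma}$ is bounded $l_r\to l_q$ if and only if $w\in l_\infty$; if $q<r$, it is bounded $l_r\to l_q$ if and only if $w\in l_\rho$ with $1/\rho:=1/q-1/r>0$. Sufficiency here is H\"older's inequality together with the embedding $l_r\subset l_q$ in the case $q\ge r$; for necessity I would test on $\vec{\gamma}=\vec{e}_k$ in the first case and on $\vec{\gamma}=\sum_{k\in F}w_k^{\rho/r}\vec{e}_k$ with $F\uparrow\mathbb{Z}^n$ in the second. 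Specializing to $w=\langle\cdot\rangle^{-\tau}$ and using that $\langle\cdot\rangle^{-\tau}\in l_\infty\iff\tau\ge0$, while $\langle\cdot\rangle^{-\tau}\in l_\rho\iff\tau\rho>n$, finishes the analysis.

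Putting the pieces together: if $1/q\le1/q_1+1/q_2$ (the case $q\ge r$), the embedding holds iff $\tau\ge0$, i.e. $s\le s_1+s_2$, which together is exactly $\mathcal{B}_2$; if $1/q>1/q_1+1/q_2$ (the case $q<r$), the embedding holds iff $\tau\rho>n$, i.e. $n(1/q-1/q_1-1/q_2)<s_1+s_2-s$, and this rearranges to $1/q+s/n<1/q_1+s_1/n+1/q_2+s_2/n$, which together with $1/q>1/q_1+1/q_2$ is exactly $\mathcal{B}_1$ (note it already forces $s<s_1+s_2$, so no hidden $l_\infty$ constraint is lost). As the two regimes are exhaustive, the lemma follows. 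I expect the only genuinely delicate points to be the exactness of the factorization in the first paragraph — especially the inclusion $\supset$ and the behaviour at the endpoints $q_j=\infty$ — and the correct choice of extremal test sequences for the necessity halves of the multiplier dichotomy; everything else reduces to H\"older's and Young's inequalities and the convergence criterion for $\sum_{k}\langle k\rangle^{-\tau\rho}$.
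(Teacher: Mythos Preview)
Your proof is correct and essentially matches the paper's approach. The paper does not prove this lemma separately (it is quoted from an external reference), but its proof of the more general Lemma~\ref{lemma, sharpness of the Holder's inequality, discrete form} uses the same ingredients---H\"older for sufficiency, and for necessity the test sequences $\vec e_k$ in the regime $1/q\le 1/q_1+1/q_2$ and truncated powers of the weight ratio in the regime $1/q>1/q_1+1/q_2$; specializing its criterion $m/(m_1m_2)\in L^r$ to power weights yields $\mathcal B_1$ and $\mathcal B_2$ immediately, and your substitution $\alpha_k=a_k\langle k\rangle^{s_1}$ together with the factorization through the $l_r$ unit ball is just a clean repackaging of that same computation.
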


\begin{lemma}[See \cite{Guo_Sharp convolution}] \label{lemma, sharpness of weighted Young's inequality}
Suppose $1\leq q, q_1, q_2\leq \infty$, $s, s_1, s_2\in \mathbb{R}$. Then
\begin{equation}
l_{q_1, s_1}\ast l_{q_2, s_2} \subset l_{q, s}
\end{equation}
if and only if $(\mathbf{q},\mathbf{s})=(q, q_1, q_2, s, s_1, s_2)$ satisfies one of the following conditions $\mathcal {A}_i$, $i=1,2,3,4$.
\begin{eqnarray}
&&(\mathcal {A}_1)\begin{cases}
s\leq s_1,~s\leq s_2,~0\leq s_1+s_2,\\
1+\Big(\frac{1}{q}+\frac{s}{n}\Big)\vee 0<\Big(\frac{1}{q_1}+\frac{s_1}{n}\Big)\vee 0+\Big(\frac{1}{q_2}+\frac{s_2}{n}\Big)\vee 0,\\
\frac{1}{q}+\frac{s}{n}\leq \frac{1}{q_1}+\frac{s_1}{n},~\frac{1}{q}+\frac{s}{n}\leq \frac{1}{q_2}+\frac{s_2}{n},
1\leq \frac{1}{q_1}+\frac{s_1}{n}+\frac{1}{q_2}+\frac{s_2}{n},\\
(q,s)=(q_1,s_1) ~\text{if}~ \frac{1}{q}+\frac{s}{n}=\frac{1}{q_1}+\frac{s_1}{n}, \\
(q,s)=(q_2,s_2) ~\text{if}~ \frac{1}{q}+\frac{s}{n}=\frac{1}{q_2}+\frac{s_2}{n},\\
(q'_1, -s_1)=(q_2,s_2)~\text{if}~ 1=\frac{1}{q_1}+\frac{s_1}{n}+\frac{1}{q_2}+\frac{s_2}{n};
\end{cases}
\\
&&(\mathcal {A}_2)\begin{cases}
s=s_1=s_2=0,\\
q=q_1, q_2=1~or~q=q_2, q_1=1~or~q=\infty, \frac{1}{q_1}+\frac{1}{q_2}=1;
\end{cases}
\\
&&(\mathcal {A}_3)\begin{cases}
s\leq s_1,~s\leq s_2,\\
\frac{1}{q_1}+\frac{1}{q_2}=1,~s_1+s_2=0,\\
\frac{1}{q}+\frac{s}{n}<0\leq \frac{1}{q_1}+\frac{s_1}{n},\frac{1}{q_2}+\frac{s_2}{n};
\end{cases}
\end{eqnarray}
\begin{eqnarray}
&&(\mathcal {A}_4)\begin{cases}
s\leq s_1,~s\leq s_2,~0\leq s_1+s_2,\\
1+\frac{1}{q}+\frac{s}{n}=\frac{1}{q_1}+\frac{s_1}{n}+\frac{1}{q_2}+\frac{s_2}{n},~\frac{1}{q}\leq \frac{1}{q_1}+\frac{1}{q_2},\\
\frac{1}{q}+\frac{s}{n}<\frac{1}{q_1}+\frac{s_1}{n},~\frac{1}{q}+\frac{s}{n}<\frac{1}{q_2}+\frac{s_2}{n},~\frac{1}{q}+\frac{s}{n}>0,\\
q\neq \infty,~q_1,q_2\neq 1,~\text{if}~s=s_1~or~s=s_2.
\end{cases}
\end{eqnarray}
Here, we use the notation \
\begin{equation*}
a\vee b=\max \{a,b\}.
\end{equation*}
\end{lemma}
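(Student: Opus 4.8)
The plan is to establish the two implications separately: necessity of the conditions $\mathcal{A}_1$--$\mathcal{A}_4$ by testing the inequality $\|\vec a\ast\vec b\|_{l_{q,s}}\lesssim\|\vec a\|_{l_{q_1,s_1}}\|\vec b\|_{l_{q_2,s_2}}$ against an explicit family of sequences, and sufficiency by reducing each configuration to the classical unweighted Young's inequality on $\mathbb{Z}^n$ together with H\"older's inequality. Throughout I would use the submultiplicativity $\langle k\rangle\le\langle k-j\rangle\langle j\rangle$ and the elementary fact that, for $a>0$, multiplication by $\langle\cdot\rangle^{-a}$ maps $l_r$ boundedly into $l_\rho$ whenever $1/\rho<1/r+a/n$, which lets one trade weight decay for a smaller Lebesgue exponent.

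For necessity, the guiding principle is that the controlling quantities are $1/q+s/n$, $1/q_i+s_i/n$ and the tails $s,s_i$, so I would run four types of test sequences. Point masses: taking $\vec a=\vec{e_N}$, $\vec b=\vec{e_0}$ with $|N|\to\infty$ forces $s\le s_1$, and symmetrically $s\le s_2$; taking $\vec a=\vec{e_N}$, $\vec b=\vec{e_{-N}}$ forces $s_1+s_2\ge0$. Characteristic functions of cubes $\chi_N=\mathbf 1_{\{|j|_\infty\le N\}}$: since $\chi_N\ast\chi_N\gtrsim N^n\chi_N$ and $\|\chi_N\|_{l_{q,s}}\sim N^{n((1/q+s/n)\vee 0)}$, letting $N\to\infty$ yields the scaling inequality $1+(1/q+s/n)\vee 0\le(1/q_1+s_1/n)\vee 0+(1/q_2+s_2/n)\vee 0$; pairing one cube with one point mass gives $l_{q_i,s_i}\subset l_{q,s}$, i.e. $1/q+s/n\le 1/q_i+s_i/n$. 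For the threshold $1\le 1/q_1+s_1/n+1/q_2+s_2/n$ I would test with the weight-flattened cubes $\vec a=\langle\cdot\rangle^{-s_1}\chi_N$, $\vec b=\langle\cdot\rangle^{-s_2}\chi_N$ and estimate $(\vec a\ast\vec b)(k)$ on a sub-cube, the sign pattern of $s_1,s_2$ being exactly what produces the split into $\mathcal{A}_1$ versus the degenerate branches. Finally, the rigidity clauses come from duality-type tests: setting $\vec b=\vec{e_0}$ gives $l_{q_1,s_1}\subset l_{q,s}$, which under the equality $1/q+s/n=1/q_1+s_1/n$ (together with the elementary embedding $l_{q_1,s_1}\subset l_{q,s}\Leftrightarrow q\ge q_1,\ s\le s_1$) forces $(q,s)=(q_1,s_1)$; evaluating $(\vec a\ast\vec b)(0)=\langle\widetilde{\vec a},\vec b\rangle$ with $\widetilde{\vec a}(j)=\vec a(-j)$ and using $(l_{q_2,s_2})^{*}=l_{q_2',-s_2}$ shows that at the threshold $1/q_1+s_1/n+1/q_2+s_2/n=1$ one must have $(q_1',-s_1)=(q_2,s_2)$.

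For sufficiency I would treat the families in turn. In $\mathcal{A}_1$ I set $A=\langle\cdot\rangle^{s_1}|\vec a|\in l_{q_1}$, $B=\langle\cdot\rangle^{s_2}|\vec b|\in l_{q_2}$, bound $\langle k\rangle^{s}|(\vec a\ast\vec b)(k)|\le\sum_j\langle k\rangle^{s}\langle k-j\rangle^{-s_1}\langle j\rangle^{-s_2}A(k-j)B(j)$, and use $s\le s_1$, $s\le s_2$, $0\le s_1+s_2$ with submultiplicativity to dominate the weight factor by $\langle k-j\rangle^{-a}\langle j\rangle^{-b}$ for suitable $a,b\ge0$; absorbing $\langle\cdot\rangle^{-a}$ and $\langle\cdot\rangle^{-b}$ into the Lebesgue exponents and closing with classical Young $l_{r_1}\ast l_{r_2}\subset l_r$ followed by the scaled embedding $l_r\subset l_q$, the strict scaling inequality (and the strict one-sided inequalities, with the rigidity clauses $(q,s)=(q_i,s_i)$ handling the one-sided equality sub-cases) providing the room needed in the H\"older steps. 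The families $\mathcal{A}_2$ and $\mathcal{A}_3$ are low-dimensional endpoints: $\mathcal{A}_2$ is exactly the unweighted classical Young endpoints, while $\mathcal{A}_3$ follows from a direct H\"older estimate, since $1/q_1+s_1/n+1/q_2+s_2/n=1$ makes $\sup_k\|\langle k-\cdot\rangle^{-s_1}\langle\cdot\rangle^{-s_2}\|_{l_1}<\infty$ and $1/q+s/n<0$ forces $\langle k\rangle^{s}$ to be bounded (indeed decaying). The genuinely delicate case is $\mathcal{A}_4$ --- the diagonal where the scaling relation holds with equality, the one-sided relations are strict, and $1/q+s/n>0$ --- where the crude absorption loses the endpoint, and I expect this to be the main obstacle of the whole argument: I would obtain it either by real interpolation between two nearby admissible $\mathcal{A}_1$-points obtained by moving $(q,s)$ along the critical line, or, more elementarily, by a Schur-type splitting of the convolution sum into a near-diagonal and a far part, the exclusion of the corners $q=\infty$ and $q_i=1$ when $s=s_i$ being precisely what keeps such a splitting summable.
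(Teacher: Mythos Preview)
The paper does not prove this lemma at all: it is quoted with a citation to the preprint \cite{Guo_Sharp convolution}, and no argument is supplied in the present paper. There is therefore no proof here to compare your sketch against.

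Your outline is along reasonable lines (test sequences for necessity, reduction to unweighted Young plus weighted embeddings for sufficiency), but two places would need genuine work before it could stand as a proof. First, the cube tests only deliver the \emph{non-strict} scaling inequality; establishing that equality in the scaling relation is admissible only in the degenerate branches $\mathcal A_2,\mathcal A_3,\mathcal A_4$, and not in $\mathcal A_1$, requires finer test sequences carrying logarithmic factors (e.g.\ $a_k=\langle k\rangle^{-s_j-n/q_j}(1+\log\langle k\rangle)^{-\alpha}$), and your sketch does not supply these. Second, your treatment of $\mathcal A_3$ is incorrect as written: with $s_1+s_2=0$ one has $\sum_{j}\langle k-j\rangle^{-s_1}\langle j\rangle^{-s_2}=\sum_{j}\langle k-j\rangle^{-s_1}\langle j\rangle^{s_1}=\infty$ for every $k$ (the summand tends to $1$ as $|j|\to\infty$), so the claimed Schur-type $l_1$ bound fails outright. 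The crude pointwise alternative $\langle k-j\rangle^{-s_1}\langle j\rangle^{s_1}\lesssim\langle k\rangle^{|s_1|}$ plus H\"older gives only $\langle k\rangle^{s}|(\vec a\ast\vec b)(k)|\lesssim\langle k\rangle^{s+|s_1|}$, and summability of the right side in $l_q$ would require $1/q+(s+|s_1|)/n<0$, which is strictly stronger than the hypothesis $1/q+s/n<0$; a further near/far splitting in $j$ (of the kind you already anticipate for $\mathcal A_4$) is still needed here.
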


As direct applications of Lemma \ref{lemma, sharpness of weighted Holder's inequality} and Lemma \ref{lemma, sharpness of weighted Young's inequality},
we use Theorem \ref{characterization of product, multiple case} and Theorem \ref{characterization of convolution, multiple case}
to give the characterizations of product inequalities and convolution inequalities on modulation and Wiener amalgam spaces with power weights.

\begin{theorem}[Sharpness of product on weighted modulation spaces] \label{Sharpness of product on modulation spaces}
Suppose $1\leq p, p_j, q, q_j\leq \infty$, $s, s_j, t, t_j \in \mathbb{R}$ for $j=1, 2$. Then
 \begin{enumerate}
   \item
$
M_{p_1, q_1}^{s_1, t_1}\cdot M_{p_2, q_2}^{s_2, t_2}\subset M_{p, q}^{s, t}
$
holds if and only if
$(\mathbf{p},\mathbf{t})=(p, p_1, p_2, t, t_1, t_2)$ satisfies one of the conditions $\mathcal {B}_i$, $i=1,2$
and $(\mathbf{q},\mathbf{s})=(q, q_1, q_2, s, s_1, s_2)$ satisfies one of the conditions $\mathcal {A}_i$, $i=1,2,3,4$.
   \item
$
W_{p_1, q_1}^{s_1, t_1}\ast W_{p_2, q_2}^{s_2, t_2}\subset W_{p, q}^{s, t}
$
holds if and only if
$(\mathbf{p},\mathbf{t})=(p, p_1, p_2, t, t_1, t_2)$ satisfies one of the conditions $\mathcal {A}_i$, $i=1,2$
and $(\mathbf{q},\mathbf{s})=(q, q_1, q_2, s, s_1, s_2)$ satisfies one of the conditions $\mathcal {B}_i$, $i=1,2,3,4$.
 \end{enumerate}
\end{theorem}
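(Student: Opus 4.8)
The plan is to derive this theorem as a specialization of the general characterizations of Section~1 to power weights, combined with the sharp discrete product and convolution inequalities recalled in Lemma~\ref{lemma, sharpness of weighted Holder's inequality} and Lemma~\ref{lemma, sharpness of weighted Young's inequality}. More precisely, I would first invoke Theorem~\ref{characterization of product, multiple case} (in the bilinear case $J=2$) to turn each space-side inclusion into a pair of discrete inclusions on $\mathbb{Z}^n$, then characterize each discrete inclusion by the appropriate weighted discrete inequality, and finally intersect the two resulting families of admissible index groups.

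For part~(1): since $\langle\cdot\rangle^{t_j},\langle\cdot\rangle^{s_j}\in\mathscr{P}(\mathbb{R}^n)$ and the requirement $1\leq p\leq\infty$ of Theorem~\ref{characterization of product, multiple case} is among our hypotheses, the discrete condition~(b) in item~(1) of that theorem gives
\begin{equation*}
M_{p_1,q_1}^{s_1,t_1}\cdot M_{p_2,q_2}^{s_2,t_2}\subset M_{p,q}^{s,t}
\ \Longleftrightarrow\
l_{p_1,t_1}\cdot l_{p_2,t_2}\subset l_{p,t}\ \text{and}\ l_{q_1,s_1}\ast l_{q_2,s_2}\subset l_{q,s},
\end{equation*}
using that the restriction of $\langle\cdot\rangle^{\sigma}$ to $\mathbb{Z}^n$ is exactly the weight defining $l_{\cdot,\sigma}$ and that $\langle\xi\rangle^{\sigma}\sim\langle k\rangle^{\sigma}$ for $\xi\in Q_k$. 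I would then apply Lemma~\ref{lemma, sharpness of weighted Holder's inequality} to the first discrete relation, obtaining its equivalence with ``$(\mathbf{p},\mathbf{t})$ satisfies one of $\mathcal{B}_1,\mathcal{B}_2$'', and Lemma~\ref{lemma, sharpness of weighted Young's inequality} to the second (legitimate because $1\leq q,q_1,q_2\leq\infty$), obtaining its equivalence with ``$(\mathbf{q},\mathbf{s})$ satisfies one of $\mathcal{A}_1,\dots,\mathcal{A}_4$''. Conjoining these two equivalences is exactly the assertion of~(1).

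For part~(2): I would repeat the argument starting from the discrete condition~(b) in item~(2) of Theorem~\ref{characterization of product, multiple case} (the characterization of $\coprod W\subset W$), which with the same power weights and $1\leq p\leq\infty$ yields
\begin{equation*}
W_{p_1,q_1}^{s_1,t_1}\ast W_{p_2,q_2}^{s_2,t_2}\subset W_{p,q}^{s,t}
\ \Longleftrightarrow\
l_{q_1,s_1}\cdot l_{q_2,s_2}\subset l_{q,s}\ \text{and}\ l_{p_1,t_1}\ast l_{p_2,t_2}\subset l_{p,t}.
\end{equation*}
Applying Lemma~\ref{lemma, sharpness of weighted Holder's inequality} to the $q$-product relation and Lemma~\ref{lemma, sharpness of weighted Young's inequality} to the $p$-convolution relation (the latter allowed since $1\leq p,p_1,p_2\leq\infty$), then taking the conjunction, gives the characterization claimed in~(2).

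I do not anticipate a genuine analytic obstacle here: all the substantive work is already contained in the characterization theorems of Sections~2--3 and in the two external sharpness lemmas, so the proof is essentially bookkeeping. The two points that deserve an explicit sentence are (i) that the index ranges $1\leq p,p_j,q,q_j\leq\infty$ in the statement are precisely what is needed for both Theorem~\ref{characterization of product, multiple case} (which requires $1\leq p\leq\infty$) and Lemma~\ref{lemma, sharpness of weighted Young's inequality} (which requires the exponents entering the relevant convolution relation to be at least $1$) to be applied simultaneously, and (ii) that replacing the continuous power weights on $\mathbb{R}^n$ by the discrete power weights on $\mathbb{Z}^n$ is the same harmless identification already used throughout Sections~2 and~3.
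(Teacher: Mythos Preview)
Your proposal is correct and matches the paper's own approach: the paper explicitly states that Theorem~\ref{Sharpness of product on modulation spaces} is a direct application of Theorem~\ref{characterization of product, multiple case} together with Lemma~\ref{lemma, sharpness of weighted Holder's inequality} and Lemma~\ref{lemma, sharpness of weighted Young's inequality}, and your write-up simply makes this explicit. One very minor remark: your claim that the hypotheses $1\leq p,p_j,q,q_j\leq\infty$ are ``precisely what is needed'' is slightly loose, since $p_1,p_2\geq 1$ is not actually used in part~(1) (Theorem~\ref{characterization of product, multiple case} only requires $1\leq p$, and Lemma~\ref{lemma, sharpness of weighted Holder's inequality} has no lower bound on the exponents); the genuine constraints are $1\leq p$ from Theorem~\ref{characterization of product, multiple case} and $1\leq q,q_1,q_2$ (resp.\ $1\leq p,p_1,p_2$) from Lemma~\ref{lemma, sharpness of weighted Young's inequality}.
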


\begin{theorem}[Sharpness of convolution on weighted modulation spaces] \label{Sharpness of convolution on modulation spaces}
Suppose $1\leq p, p_j, q, q_j\leq \infty$, $s, s_j, t, t_j \in \mathbb{R}$ for $j=1, 2$. Then
 \begin{enumerate}
   \item
$
M_{p_1, q_1}^{s_1, t_1}\ast M_{p_2, q_2}^{s_2, t_2}\subset M_{p, q}^{s, t}
$
holds if and only if
$(\mathbf{p},\mathbf{t})=(p, p_1, p_2, t, t_1, t_2)$ satisfies one of the conditions $\mathcal {A}_i$, $i=1,2,3,4$
and $(\mathbf{q},\mathbf{s})=(q, q_1, q_2, s, s_1, s_2)$ satisfies one of the conditions $\mathcal {B}_i$, $i=1,2$.
   \item
$
W_{p_1, q_1}^{s_1, t_1}\cdot W_{p_2, q_2}^{s_2, t_2}\subset W_{p, q}^{s, t}
$
holds if and only if
$(\mathbf{p},\mathbf{t})=(p, p_1, p_2, t, t_1, t_2)$ satisfies one of the conditions $\mathcal {B}_i$, $i=1,2,3,4$
and $(\mathbf{q},\mathbf{s})=(q, q_1, q_2, s, s_1, s_2)$ satisfies one of the conditions $\mathcal {A}_i$, $i=1,2$.
 \end{enumerate}
\end{theorem}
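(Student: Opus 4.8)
The plan is to derive both inclusions from the already-established discrete characterizations by a routine change of index/weight labels, and then invoke the two sharp discrete inequalities of Lemma~\ref{lemma, sharpness of weighted Holder's inequality} and Lemma~\ref{lemma, sharpness of weighted Young's inequality}. First recall the placement of weights: $M_{p,q}^{s,t}$ is, by Proposition~\ref{proposition, equivalent norm}, equivalent to $\mathcal{M}_{p,q}^{m,\omega}$ with $\omega=\langle\cdot\rangle^{t}$ and $m=\langle\cdot\rangle^{s}$, so $t$ is the spatial weight (carried by the $p$-index) and $s$ the frequency weight (carried by the $q$-index); the same reading applies to $W_{p,q}^{s,t}$. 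Since power weights lie in $\mathscr{P}(\mathbb{R}^{n})$, Theorem~\ref{characterization of convolution, multiple case} is available.

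For assertion (1), apply part (1) of Theorem~\ref{characterization of convolution, multiple case} with $J=2$, $\omega_{j}=\langle\cdot\rangle^{t_{j}}$, $m_{j}=\langle\cdot\rangle^{s_{j}}$: together with Proposition~\ref{proposition, equivalent norm} this shows that $M_{p_{1},q_{1}}^{s_{1},t_{1}}\ast M_{p_{2},q_{2}}^{s_{2},t_{2}}\subset M_{p,q}^{s,t}$ is equivalent to
\begin{equation*}
l_{p_{1},t_{1}}\ast l_{p_{2},t_{2}}\subset l_{p,t}\qquad\text{and}\qquad l_{q_{1},s_{1}}\cdot l_{q_{2},s_{2}}\subset l_{q,s};
\end{equation*}
that is, convolution on the modulation space imposes a Young-type inequality on the spatial triple $(p,p_{1},p_{2})$ with weights $(t,t_{1},t_{2})$ and a H\"older-type inequality on the frequency triple $(q,q_{1},q_{2})$ with weights $(s,s_{1},s_{2})$. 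Because $1\le p,p_{j}\le\infty$, Lemma~\ref{lemma, sharpness of weighted Young's inequality} (with the roles of $(q,q_{1},q_{2},s,s_{1},s_{2})$ played by $(p,p_{1},p_{2},t,t_{1},t_{2})$) characterizes the first inclusion as: $(\mathbf{p},\mathbf{t})$ satisfies one of $\mathcal{A}_{1},\dots,\mathcal{A}_{4}$; and Lemma~\ref{lemma, sharpness of weighted Holder's inequality} characterizes the second as: $(\mathbf{q},\mathbf{s})$ satisfies one of $\mathcal{B}_{1},\mathcal{B}_{2}$. Conjoining gives (1). Assertion (2) is obtained in the same fashion from part (2) of Theorem~\ref{characterization of convolution, multiple case}: $W_{p_{1},q_{1}}^{s_{1},t_{1}}\cdot W_{p_{2},q_{2}}^{s_{2},t_{2}}\subset W_{p,q}^{s,t}$ is equivalent to $l_{p_{1},t_{1}}\cdot l_{p_{2},t_{2}}\subset l_{p,t}$ together with $l_{q_{1},s_{1}}\ast l_{q_{2},s_{2}}\subset l_{q,s}$ (product in physical space being H\"older on the spatial index and Young on the frequency index), so Lemma~\ref{lemma, sharpness of weighted Holder's inequality} fixes the conditions on $(\mathbf{p},\mathbf{t})$ as $\mathcal{B}_{1},\mathcal{B}_{2}$ and Lemma~\ref{lemma, sharpness of weighted Young's inequality}, now using $1\le q,q_{j}\le\infty$, fixes the conditions on $(\mathbf{q},\mathbf{s})$ as $\mathcal{A}_{1},\dots,\mathcal{A}_{4}$.

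There is no substantial analytic difficulty: the two reductions of Sections~2 and~3 and the sharp discrete H\"older and Young inequalities do all the work, and the argument is essentially bookkeeping. The one point that genuinely must be watched is the pairing of the two ``halves'' of the modulation (respectively Wiener) norm with the correct discrete inequality and the correct weight exponent, and the observation that the hypothesis $p,p_{j},q,q_{j}\ge1$ is precisely what renders Lemma~\ref{lemma, sharpness of weighted Young's inequality}---which demands all three of its exponents to be $\ge1$---applicable to whichever index triple happens to carry the convolution. With these identifications in place, (1) and (2) follow by direct substitution.
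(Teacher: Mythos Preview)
Your proposal is correct and follows precisely the route the paper indicates: Theorem~\ref{characterization of convolution, multiple case} reduces each inclusion to a pair of discrete conditions (one H\"older-type, one Young-type) on the sequence spaces, and Lemmas~\ref{lemma, sharpness of weighted Holder's inequality} and~\ref{lemma, sharpness of weighted Young's inequality} then characterize those conditions by the $\mathcal{B}_i$ and $\mathcal{A}_i$ alternatives respectively. Your bookkeeping of which index/weight pair carries the convolution versus the product, and your remark that the hypothesis $p,p_j,q,q_j\ge1$ is exactly what licenses the use of Lemma~\ref{lemma, sharpness of weighted Young's inequality}, are both accurate; note incidentally that the index ranges ``$i=1,2,3,4$'' for $\mathcal{B}_i$ and ``$i=1,2$'' for $\mathcal{A}_i$ in the statement of part~(2) are evidently transposed typographical slips.
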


We also recall a embedding result about $l_{q,s}$.
This lemma is easy to be verified and hence its proof is not included here.
\begin{lemma}\label{Sharpness of embedding, discrete form}
Suppose $0<q,q_1,q_2\leq \infty$, $s,s_1,s_2\in \mathbb{R}$. Then
\begin{equation}
l_{q_1,s_1}\subset l_{q_2,s_2}
\end{equation}
holds if and only if
\begin{eqnarray}
&&(\mathcal {C}_1)\
\frac{1}{q_1}\leq \frac{1}{q_2},~s_1\leq s_2,
\\
&&(\mathcal {C}_2)\
\frac{1}{q_1}> \frac{1}{q_2},~\frac{1}{q_1}+\frac{s_1}{n}< \frac{1}{q_2}+\frac{s_2}{n}.
\end{eqnarray}

\end{lemma}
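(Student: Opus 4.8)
The plan is to reduce the weighted inclusion to two elementary inputs: the behaviour of the unweighted spaces $\ell^{q}(\mathbb{Z}^{n})$ under change of $q$ (nesting and Hölder's inequality), and the summability criterion that the power sequence $\{\langle k\rangle^{\sigma}\}_{k\in\mathbb{Z}^{n}}$ belongs to $\ell^{r}(\mathbb{Z}^{n})$ precisely when $\sigma r<-n$, with the convention that for $r=\infty$ this reads $\sigma\le 0$. Since the dichotomy between $(\mathcal{C}_{1})$ and $(\mathcal{C}_{2})$ is decided purely by the order relation between $1/q_{1}$ and $1/q_{2}$, the two cases may be handled separately.

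For sufficiency, given $f\colon\mathbb{Z}^{n}\to\mathbb{C}$ I would put $g(k)=f(k)\langle k\rangle^{s_{1}}$, so that $\|f\|_{l_{q_{1},s_{1}}}=\|g\|_{l_{q_{1}}}$ and $\|f\|_{l_{q_{2},s_{2}}}=\big\|g\,\langle\cdot\rangle^{s_{2}-s_{1}}\big\|_{l_{q_{2}}}$. In the regime where the relevant $\ell^{q}$-nesting is available, I would first pass from $l_{q_{1}}$ to $l_{q_{2}}$ by that nesting and then absorb the factor $\langle k\rangle^{s_{2}-s_{1}}$, which is a bounded sequence under the accompanying condition on $s_{1},s_{2}$. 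In the other regime I would instead split $1/q_{2}=1/q_{1}+1/r$ and apply Hölder's inequality $\big\|g\,\langle\cdot\rangle^{s_{2}-s_{1}}\big\|_{l_{q_{2}}}\le\|g\|_{l_{q_{1}}}\,\big\|\langle\cdot\rangle^{s_{2}-s_{1}}\big\|_{l_{r}}$, the last factor being finite exactly under the stated strict inequality relating $1/q_{1}+s_{1}/n$ and $1/q_{2}+s_{2}/n$.

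For necessity I would test the assumed inequality $\|f\|_{l_{q_{2},s_{2}}}\lesssim\|f\|_{l_{q_{1},s_{1}}}$ on two families of sequences. The unit masses $f=\vec{e_{k}}$ force $\langle k\rangle^{s_{2}}\lesssim\langle k\rangle^{s_{1}}$ uniformly in $k$, giving the $s$-condition in the first case. The truncated sequences $f=\sum_{|k|_{\infty}\le N}\langle k\rangle^{-s_{1}}\vec{e_{k}}$, for which $\|f\|_{l_{q_{1},s_{1}}}\sim N^{n/q_{1}}$ while $\|f\|_{l_{q_{2},s_{2}}}=\big(\sum_{|k|_{\infty}\le N}\langle k\rangle^{(s_{2}-s_{1})q_{2}}\big)^{1/q_{2}}$, yield after letting $N\to\infty$ the required inequality between $1/q_{1}+s_{1}/n$ and $1/q_{2}+s_{2}/n$; the borderline case, in which these truncated sums grow only logarithmically, is excluded by the strictness together with a separate single-scale estimate. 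I expect the only genuine work --- and it is light --- to be the bookkeeping of which argument is operative in each range of $(q_{1},q_{2})$ and of where the endpoint exponents $q_{j}\in\{1,\infty\}$ and the equality cases of the exponent relations fall; this is exactly the weighted discrete refinement of the classical nesting of the $\ell^{q}$ spaces, which is why the lemma could be left to the reader.
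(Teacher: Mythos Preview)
The paper gives no proof of this lemma at all; it simply remarks that it ``is easy to be verified and hence its proof is not included here.'' Your sketch is correct and is precisely the standard elementary argument one would expect: reduce to the unweighted problem via the substitution $g(k)=f(k)\langle k\rangle^{s_1}$, then use $\ell^q$-nesting or H\"older's inequality for sufficiency, and test on unit masses $\vec{e_k}$ and truncated power sequences for necessity, with a logarithmic-weight test to rule out the endpoint in the $(\mathcal{C}_2)$ regime.

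One incidental observation: the inequalities in the printed conditions $(\mathcal{C}_1)$ and $(\mathcal{C}_2)$ have the indices $1$ and $2$ reversed --- for example, setting $s_1=s_2=0$, the embedding $\ell^{q_1}\subset\ell^{q_2}$ on $\mathbb{Z}^n$ requires $q_1\le q_2$, i.e.\ $1/q_1\ge 1/q_2$, not $1/q_1\le 1/q_2$ as written. Your argument (boundedness of $\langle k\rangle^{s_2-s_1}$, the direction of the H\"older split, and the unit-mass test forcing $s_2\le s_1$) is in fact a proof of the corrected version, which is clearly what the authors intended.
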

With this lemma, combined with Theorem \ref{characterization of embedding},
we obtain the following characterization of embedding between modulation and Wiener amalgam spaces with power weights.

\begin{theorem}[Sharpness of embedding on weighted modulation spaces] \label{Sharpness of embedding on modulation spaces}
Suppose $0\leq p_j, q_j\leq \infty$, $s_j, t_j \in \mathbb{R}$ for $j=1, 2$. Then
 \begin{enumerate}
   \item
$
M_{p_1, q_1}^{s_1, t_1}\subset M_{p_2, q_2}^{s_2, t_2}
$
holds if and only if
$(p_1, p_2, t_1, t_2)$ satisfies one of the conditions $\mathcal {C}_i$, $i=1,2$,
and $(\mathbf{q},\mathbf{s})=(q_1, q_2, s_1, s_2)$ satisfies one of the conditions $\mathcal {C}_i$, $i=1,2$.
   \item
$
W_{p_1, q_1}^{s_1, t_1}\subset W_{p_2, q_2}^{s_2, t_2}
$
holds if and only if
$(p_1, p_2, t_1, t_2)$ satisfies one of the conditions $\mathcal {C}_i$, $i=1,2$,
and $(\mathbf{q},\mathbf{s})=(q_1, q_2, s_1, s_2)$ satisfies one of the conditions $\mathcal {C}_i$, $i=1,2$.
 \end{enumerate}
\end{theorem}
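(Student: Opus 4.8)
The plan is to obtain Theorem~\ref{Sharpness of embedding on modulation spaces} as a direct corollary of Theorem~\ref{characterization of embedding} together with Lemma~\ref{Sharpness of embedding, discrete form}: the former already reduces an embedding between weighted modulation (or Wiener amalgam) spaces to a pair of embeddings between discrete weighted Lebesgue spaces, and the latter characterizes precisely such discrete embeddings in the power weight case. Thus no new harmonic analysis is required; the proof consists only in matching up the indices with the weights, after noting that the power weights $\langle\cdot\rangle^{s_j},\langle\cdot\rangle^{t_j}$ all belong to $\mathscr{P}(\mathbb{R}^n)$.

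For part (1) I would argue as follows. By definition $M_{p_1,q_1}^{s_1,t_1}$ is the modulation space whose space-side weight is $\langle\cdot\rangle^{t_1}$ (the weight paired with the index $p_1$) and whose frequency-side weight is $\langle\cdot\rangle^{s_1}$ (the weight paired with $q_1$), and similarly for the target. Applying Theorem~\ref{characterization of embedding}~(1), the inclusion $M_{p_1,q_1}^{s_1,t_1}\subset M_{p_2,q_2}^{s_2,t_2}$ is equivalent to its statement (b), that is,
\begin{equation*}
l_{p_1,t_1}\subset l_{p_2,t_2}\qquad\text{and}\qquad l_{q_1,s_1}\subset l_{q_2,s_2}.
\end{equation*}
Now Lemma~\ref{Sharpness of embedding, discrete form} applied to the first inclusion shows that $l_{p_1,t_1}\subset l_{p_2,t_2}$ holds if and only if $(p_1,p_2,t_1,t_2)$ satisfies $\mathcal{C}_1$ or $\mathcal{C}_2$, and applied to the second inclusion shows that $l_{q_1,s_1}\subset l_{q_2,s_2}$ holds if and only if $(q_1,q_2,s_1,s_2)$ satisfies $\mathcal{C}_1$ or $\mathcal{C}_2$. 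Conjoining the two equivalences yields exactly the claimed characterization.

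For part (2) I would run the identical argument with Theorem~\ref{characterization of embedding}~(2) in place of (1); alternatively one can invoke the identity $W_{p,q}^{\omega,m}=\mathscr{F}(\mathcal{M}_{q,p}^{\tilde{\omega},m})$ together with the evenness of power weights ($\tilde{\omega}=\omega$) to transfer the Wiener amalgam embedding to a modulation space embedding with the roles of $p$ and $q$ interchanged, and then apply part (1). Because the conditions $\mathcal{C}_i$ are imposed separately on the ``$p$--$t$'' block and the ``$q$--$s$'' block, the interchange leaves the resulting list of conditions unchanged, so one recovers the same statement as in (1).

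The only genuine care needed — as opposed to a real obstacle — is bookkeeping: one must keep straight which weight exponent travels with which Lebesgue index (in $M_{p,q}^{s,t}$ the exponent $t$ goes with $p$ and $s$ goes with $q$, matching the convention in which $\omega$ is paired with $p$ and $m$ with $q$ in Theorem~\ref{characterization of embedding}), and one should verify that the index ranges assumed here are contained in those required by Theorem~\ref{characterization of embedding} and Lemma~\ref{Sharpness of embedding, discrete form}. No case analysis over $\mathcal{C}_1,\mathcal{C}_2$ is needed, since both cited results are stated as exact equivalences.
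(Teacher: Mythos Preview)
Your proposal is correct and is exactly the paper's approach: the paper states just before the theorem that it follows from Lemma~\ref{Sharpness of embedding, discrete form} combined with Theorem~\ref{characterization of embedding}, and gives no further details. Your bookkeeping on which weight exponent travels with which Lebesgue index is right, and the alternative route for part~(2) via the Fourier relation between $W$ and $\mathcal{M}$ spaces is also valid, though the paper simply invokes Theorem~\ref{characterization of embedding}(2) directly.
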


\begin{remark}
As important tools to study some nonlinear problems,
estimates about the product and convolution relations on modulation spaces with power weights have been repeatedly appeared in many articles in the field of PDE.
For example, one can refer to \cite{Iwabuchi, WZG_JFA_2006, WH_JDE_2007}.
So, it is quite interesting to obtain characterizations of the product and convolution relations on modulation spaces.
Such research attracts many authors.
One of the papers that appears very recently is \cite{Toft_sharp convolution}.
However, we find that, in \cite{Toft_sharp convolution}, many endpoint cases are not addressed, which lead gaps between the sufficient and necessary conditions.
As an application of our main theorems, we complete the study in \cite{Toft_sharp convolution} by finding the sharp conditions for the relations on modulation and Wiener amalgam spaces with power weights.
\end{remark}

\subsection*{Acknowledgements}
This work was partially supported by the National Natural Foundation of China (Nos. 11271330 and 11471288).


\begin{thebibliography}{99}
\bibitem{AKKL_JFA_2007}
A. B$\acute{e}$nyi, K. Gr\"{o}chenig, K.A. Okoudjou, L.G. Rogers,
Unimodular Fourier multiplier for modulation spaces,
J. Funct. Anal. 246 (2007), 366-384.

\bibitem{AKC_JOT_2005}
A. B$\acute{e}$nyi, K. Grochenig, C. Heil, et al.,
Modulation spaces and a class of bounded multilinear pseudodifferential operators,
J. Operator Theory, 54(2) (2005), 387-400.

\bibitem{Chen_Fan_Sun}
J. Chen, D. Fan, L. Sun.
Asymptotic estimates for unimodular Fourier multipliers on modulation spaces.
Discret. Contin. Dyn. Syst., 32 (2012), 467-485.

\bibitem{Cordero_Nicola_Sharpness}
E. Cordero, F. Nicola,
Sharpness of some properties of Wiener amalgam and modulation spaces,
Bull. Aust. Math. Soc, 80(1) (2009), 105-116.

\bibitem{Cordero_Nicola_JDE}
E. Cordero, F. Nicola,
Some new Strichartz estimates for the Schr\"{o}dinger equation,
J. Differential Equations, 245(7) (2008), 1945-1974.

\bibitem{Cordero_Okoudjou_dilation}
E. Cordero , K.A. Okoudjou,
Dilation properties for weighted modulation spaces,
J. Funct. Spaces Appl. (2012).

\bibitem{Feichtinger}
H. G. Feichtinger,
Modulation spaces on locally compact Abelian group, Technical Report, University of Vienna, 1983.
Published in: ``Proc. Internat. Conf. on Wavelet and Applications'', 99-140. New Delhi Allied Publishers, India, 2003.

\bibitem{Feichtinger_Banach convolution}
H. G. Feichtinger,
Banach convolution algebras of Wiener type, In functions. series. operators, Vol, I, II (Budapest. 1980), 509-524. North-Holland. Amsterdam. 1983.

\bibitem{Feichtinger_Survey}
H. G. Feichtinger,
Modulation spaces: looking back and ahead,
Sampling Theory in Signal and Image Processing, 5(2)(2006), 109-140.

\bibitem{Feichtinger_Narimani}
H.G. Feichtinger, G. Narimani,
Fourier multipliers of classical modulation spaces, Appl. Comput. Harmon. Anal. 21 (2006), 349-359.

\bibitem{Folland_book_1989}
G. B. Folland, Harmonic Analysis in Phase Space,
Princton Univ. Press, Princeton, NJ, 1989.

\bibitem{Galperin_Samarah}
Y.V. Galperin, S. Samarah,
Time-frequency analysis on modulation spaces $M^{p,q}_m$ , $0<p,q\leq \infty$,
Appl. Comput. Harmon. Anal., 16 (2004) 1-18.

\bibitem{Grochnig}
K. Gr\"{o}chenig,
Foundations of Time-Frequency Analysis,
Birkh\"{a}user, Boston, MA, 2001.

\bibitem{Grochnig_weight}
K. Gr\"{o}chenig,
Weight functions in time-frequency analysis,
Pseudodifferential Operators: Partial Differential Equations and Time-Frequency Analysis, Amer. Math. Soc., Fields Inst. Commun., 52 (2007) [Providence, RI],343--366.

\bibitem{Guo_Sharp convolution}W. Guo, D. Fan, H. Wu, G. Zhao, Sharp Weighted convolution inequalities and some applications, preparing.
\bibitem{Guo_SCI China}W. Guo, D. Fan, H. Wu, G. Zhao, Sharpness of some properties of weighted modulation sapces, preprint.
\bibitem{C.Heil} C. Heil, An Introduction to Weighted Wiener Amalgams, Wavelets and their Applications (2003), 183-216.
\bibitem{Iwabuchi} T. Iwabuchi,  Navier-Stokes equations and nonlinear heat equations in modulation spaces with negative derivative indices, J. Differential Equations 248(8) (2010), 1972-2002.
\bibitem{Kobayashi} M. Kobayashi, Modulation spaces $M^{p,q}$ for $0 < p, q\leq \infty$, J. of Funct. Spaces and Appl., 4 (2006), no.3, 329-341.
\bibitem{Miyachi} A. Miyachi, F. Nicola, S. Rivetti, A. Tabacco, N. Tomita, Estimates for unimodular Fourier multipliers on modulation spaces, Proc. Amer. Math. Soc. 137(2009) 3869-3883.
\bibitem{Sugimoto_Tomita} M. Sugimoto, N. Tomita,  The dilation property of modulation spaces and their inclusion relation with Besov spaces.
J. Funct. Anal. 248(1) (2007), 79-106.
\bibitem{Toft_weighted modulation space}J. Toft,  Convolutions and embeddings for weighted modulation spaces, Advances in pseudo-differential operators. Birkh\"{a}user Basel (2004), 165-186.
\bibitem{Toft_sharp convolution}J. Toft, K. Johansson, S. Pilipovi\'{e}, N. Teofanov, Sharp convolution and multiplication estimates in weighted spaces. Analysis and Applications (2014), 1-24.
\bibitem{Tribel_modulation space} H. Triebel, Modulation spaces on the Euclidean n-space, Z. Anal. Anwendungen 2 (5) (1983) 443-457.
\bibitem{Tribel_book_1983} H. Triebel, Theory of Function Spaces, Birkh$\ddot{a}$user, Basel, 1983.
\bibitem{WZG_JFA_2006}B. Wang, L. Zhao, B. Guo, Isometric decomposition operators, function spaces $E^{\lambda}_{p,q}$
and applications to nonlinear evolution equations, J. Funct. Anal. 233 (1) (2006), 1-39.
\bibitem{Wang_book}  B. Wang, Z. Huo, C. Hao, Z. Guo, Harmonic Analysis Method for Nonlinear Evolution Equations I. Hackensack, NJ: World
Scientific, 2011.
\bibitem{WH_JDE_2007}B. Wang, H. Hudzik, The global Cauchy problem for the NLS and NLKG with
small rough data, J. Differential Equations 232 (2007), 36-73.
\end{thebibliography}
\end{document}